\newtheorem{introthm}{Theorem}
\newtheorem*{introcor}{Corollary}
\newtheorem{introq}{Question}
\newtheorem{thm}{Theorem}[section]
\newtheorem{lem}[thm]{Lemma}
\newtheorem{prop}[thm]{Proposition}
\newtheorem{cor}[thm]{Corollary}
\theoremstyle{definition}
\theoremstyle{definition}
\newtheorem{defn}[thm]{Definition}
\theoremstyle{remark}
\newtheorem{rmk}[thm]{Remark}
\theoremstyle{remark}
\newcommand{\ti}{\overline{t}_1}
\newcommand{\tii}{\overline{t}_2}
\newcommand{\tiii}{\overline{q t}_3}
\newcommand{\tiv}{\overline{t}_4}
\newcommand{\Sk}{\operatorname{Sk}_s}
\newcommand{\End}{\operatorname{End}}
\newcommand{\Ad}{\operatorname{Ad}}
\newcommand{\lp}{\operatorname{loop}}
\newcommand{\qint}[1]{\left[#1\right]_s}
\newcommand{\rspan}{\operatorname{span}_R}
\newcommand{\SL}{\operatorname{SL}_2}
\newcommand{\hs}{\hat \sigma}
\newcommand{\hy}{\hat y}
\newcommand*\centermathcell[1]{\omit\hfil$\displaystyle#1$\hfil\ignorespaces}
\definecolor{twcol}{rgb}{0.6, 0, 0.2}
\title{On the genus two skein algebra}
\author{Juliet Cooke and Peter Samuelson}
\date{\today}
\begin{document}
\maketitle
\abstract{We study the skein algebra of the genus 2 surface and its action on the skein module of the genus 2 handlebody. We compute this action explicitly, and we describe how the module decomposes over certain subalgebras in terms of polynomial representations of double affine Hecke algebras. Finally, we show that this algebra is isomorphic to the $t=q$ specialisation of the  genus two spherical double affine Hecke algebra recently defined by Arthamonov and Shakirov. }

\section{Introduction}
%For adjustment purposes the macro options are: \diagramhh{svg_file_name}{group_object_name_in_svg_file}{vertical_shifting}{0pt}{scaling}. The vertical shifting is used to make the figure line up nicely with the equals signs etc. so should be set to 0 for standalone figures, and obviously scaling is useful.

The Kauffman bracket skein algebra of a surface $\Sigma$ is spanned by framed links in the thickened surface $\Sigma \times [0,1]$, with the following local\footnote{By \emph{local relation} we mean the following. The first picture represents 3 links which are identical outside of an embedded ball in $\Sigma \times [0,1]$ and inside the ball are as pictured. Similarly, the second says a trivially framed unknot in an embedded ball can be removed at the expense of a scalar.}
 relations imposed: 
\begin{align*}
\skeindiagram{leftnoarrow} &= s\; \skeindiagram{horizontal} + s^{-1} \; \skeindiagram{vertical} \\ 
\skeindiagram{circle}      &= -s^{2} -s^{-2}
\end{align*}
\noindent 

Multiplication in the algebra is given by stacking links in the $[0,1]$ direction. Typically, this algebra is noncommutative; however, its $s=\pm 1$ specialisation is commutative since in this case the right hand side of the first skein relation is symmetric with respect to switching the crossing. \citeauthor{Bul97,PS00}~\cite{Bul97,PS00} showed that at $s=-1$ the skein algebra $\Sk(\Sigma)$ is isomorphic to the ring of functions on the $\SL$ character variety of $\Sigma$. \citet{BFK99} strengthened this statement by showing that the skein algebra is a quantization of the $\SL$ character variety of $\Sigma$ with respect to Atiyah--Bott--Goldman Poisson bracket. 

The \emph{skein module} of a 3-manifold $M$ is defined in the same way as the skein algebra (using links in $M$ instead of in $\Sigma \times [0,1]$), and it is a module over the algebra associated to the boundary $\partial M$. The action is given by `pushing links from a neighbourhood of the boundary into $M$'. At $s=-1$, the 3-manifold $M$ determines a Lagrangian subvariety of the character variety, which consists of the representations $\pi_1(\partial M) \to \SL(\mathbb C)$ that extend to representations of $\pi_1(M)$. The fact that the skein module of $M$ is a module over $\Sk(\partial M)$ is an illustration of the general principle that `the quantization of a coisotropic subvariety is a module'.

From now on we write $\Sigma_{g,n}$ for the genus $g$ surface with $n$ punctures. For the torus $\Sigma_{1,0}$,
the skein algebra and its action on the skein module of the solid torus were described explicitly by \citet{FG00}.  
This description led to interesting connections to Double Affine Hecke Algebras (DAHAs); for example, it follows from their results that the skein algebra of the torus is isomorphic to the $t=q$ specialisation of the $A_1$ spherical DAHA, the $\SL(\mathbb Z)$ actions on both algebras agree, and the polynomial representation of the DAHA is isomorphic (again at $t=q$) to the skein module of the solid torus. The main goal of the present paper is to find relationships between various versions of DAHAs and the skein algebra and module of genus 2 surface and handlebody.

Since DAHAs tend to be defined by explicit formulas, our first task is to find concrete descriptions of the skein module $\Sk(\mathcal H_2)$ of the handlebody, which we do as follows.
The skein algebra $\Sk(\Sigma_{2,0})$ of the closed genus two surface is generated by the five curves $A_1,A_2,A_3, B_{12},$ and $B_{23}$ depicted in \cref{fig:introfigure}. 
There are two natural bases to use for the skein module $\Sk(\mathcal H_2)$ of the handlebody, the theta basis $n(i,j,k)$ and the dumbbell basis $m(i,j,k)$. 
We compute the action of generators of $\Sk(\Sigma_{2,0})$ on $\Sk(\mathcal H_2)$ in both of these bases in the following theorem (see \cref{naction} and \cref{maction}).
\begin{introthm}\label{thm:intro1}
The operators $A_1, A_2, A_3$ act diagonally in the theta basis $n(i,j,k)$ of $\Sk(\mathcal H_2)$, and matrix coefficients for $B_{12}$, $B_{13}$, $B_{23}$ 
with respect to this basis are in equation \eqref{eq:nact}. 
The dumbbell basis $m(i,j,k)$ diagonalises the operators $A_1$ and $A_3$, and matrix coefficients for the other operators are in \cref{maction}.
\end{introthm}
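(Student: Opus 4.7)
The plan is to prove both statements by direct graphical calculation in the Kauffman bracket skein module, using the standard Jones--Wenzl machinery. I would first recall the explicit description of the two bases: $n(i,j,k)$ is the trivalent theta graph in $\mathcal H_2$ with its three edges decorated by Jones--Wenzl projectors $JW_i, JW_j, JW_k$ meeting at two admissible trivalent vertices, while $m(i,j,k)$ is the dumbbell with two loops carrying $JW_i$ and $JW_k$ joined by a segment carrying $JW_j$. Both are classical constructions and give bases of $\Sk(\mathcal H_2)$ indexed by admissible triples.

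For the diagonal part of the theta statement I would push each $A_r$ off the boundary into $\mathcal H_2$ so that it becomes a small unknot encircling the $r$-th edge of the theta graph. The standard local identity that an unknot labelled $1$ encircling a $JW_n$-coloured strand equals $-(s^{2(n+1)}+s^{-2(n+1)})$ times that strand immediately gives the eigenvalues. The same manoeuvre diagonalises $A_1$ and $A_3$ on the dumbbell basis, since in that case the relevant $A_i$ encircles one of the two Jones--Wenzl loops.

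For the off-diagonal generators $B_{12}, B_{13}, B_{23}$ acting on the theta basis, the strategy is a recoupling calculation. After pushing $B_{ij}$ into $\mathcal H_2$, it becomes a circle surrounding two of the three theta edges. Inserting a Jones--Wenzl idempotent on this circle and then doing a fusion move (the usual Kauffman bracket trivalent-vertex identity) rewrites the encircled pair of $JW$-strands as a single $JW_\ell$-strand summed over admissible $\ell$; closing this back up with the remaining structure yields another theta graph $n(i',j',k')$. The coefficients are products of theta-evaluations and tetrahedral (quantum $6j$) coefficients, which I would collect to give the explicit matrix entries of equation~\eqref{eq:nact}. For the dumbbell basis I would carry out the analogous fusion on the shorter connecting edge of the dumbbell and apply the same recoupling toolkit; equivalently, one can compute the change-of-basis matrix between $m(i,j,k)$ and $n(i,j,k)$ (again a quantum $6j$-symbol) and conjugate the theta-basis formulas, using whichever form keeps the matrix coefficients cleanest.

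The expected main obstacle is the bookkeeping in step three: producing the matrix entries of $B_{12}$ and $B_{23}$ in closed form requires the evaluation of a tetrahedral network in terms of $s$-quantum integers, with careful attention to the admissibility constraints that make various summands vanish. In particular, the genus-2 theta graph has a $\mathbb Z/3$ symmetry permuting the edges which is broken by the choice of basis, so one has to keep track of which $B_{ij}$ couples which pair of labels; the formula for $B_{13}$, which does not correspond to a generator of $\Sk(\Sigma_{2,0})$, must be derived from the skein relations among $A_2$, $B_{12}$, $B_{23}$ and the other generators. Once these recouplings are done carefully, both halves of the theorem reduce to the standard quantum $6j$ calculus and the formulas claimed in \cref{naction,maction} follow.
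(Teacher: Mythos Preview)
Your plan for the diagonal statements is exactly what the paper does: push each $A_r$ into the handlebody so that it encircles a single Jones--Wenzl strand and apply the standard loop-removal identity. Likewise $A_1,A_3$ (and the separating curve $X$) are diagonal on the dumbbell basis for the same reason.

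For the $B_{ij}$ action on the theta basis, your approach diverges from the paper's and contains a misconception. The paper does \emph{not} use fusion of the two encircled strands into a single $JW_\ell$ and then evaluate a tetrahedral network; instead it isotopes the single $B_{12}$ strand parallel to the $i$- and $j$-edges, inserts it into $JW_{i+1}$ and $JW_{j+1}$ boxes, and applies the Wenzl recursion once on each box. Two elementary local identities (the ``leaf'' and ``circle-on-strand'' simplifications) then collapse each of the four resulting terms directly to a theta graph $n(i\pm 1,j\pm 1,k)$ with the closed-form coefficients $D_{a,b}$; no $6j$-symbols or tetrahedra appear. Your recoupling route can be made to work, but it produces the answer as a sum of $6j$-symbols that must then be simplified, so it is strictly longer. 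More importantly, your claim that $B_{13}$ ``does not correspond to a generator'' and must be derived from relations among the others is wrong for this purpose: although $B_{13}$ is not in the minimal Humphries generating set, it is a perfectly good simple closed curve, and its action is computed \emph{directly} by the identical diagrammatic argument---the theta graph has a threefold symmetry permuting $(i,j,k)$, and the three $B$-curves permute accordingly, so the $B_{13}$ and $B_{23}$ formulas follow from the $B_{12}$ computation by relabelling.

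For the dumbbell basis your proposed shortcut---conjugating the theta-basis formulas by the $6j$ change-of-basis---is precisely the approach the paper tries first and then abandons. It gives $A_2\cdot m(i,j,k)$ as a double sum of $6j$-symbols from which one cannot even read off the number of nonzero terms. The paper instead computes $B_{12},B_{23}$ on $m(i,j,k)$ by a short Wenzl-recursion argument (two terms each), and devotes an entire appendix to a direct calculation of $A_2\cdot m(i,j,k)$: one resolves the two crossings of $A_2$ with the dumbbell via repeated skein and twist identities, reducing to a ``loop-through-the-bar'' diagram that is then simplified by Wenzl recursion to three terms $m(i,j,k)$, $m(i,j\pm 2,k)$. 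This $A_2$ computation is the genuine technical heart of the theorem and is not addressed by your plan.
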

There is a third basis of $\Sk(\mathcal H_2)$ which comes from the fact that the genus 2 handlebody is diffeomorphic to a thickening of a 2-punctured disc. 
This means $\Sk(\mathcal H_2)$ is actually an algebra, and it turns out to be the polynomial algebra in the elements $B_{12}$, $B_{13}$, and $B_{23}$.
We provide some comments about the resulting monomial basis in \cref{sec:background} following \cref{lemma:poly}.

\subsection{Double affine Hecke algebras of rank 1}
Double affine Hecke algebras were introduced by \citet{Che95} for his proof of Macdonald's conjectures, and have since been related to a wide variety of areas (see, for example \cite{Che05} and references therein). There are two versions that will be relevant for our purposes: the 2-parameter spherical\footnote{The spherical DAHA is a subalgebra  which is analogous to a subalgebra of invariants of a group action.} DAHA $SH_{q,t}$ of type $A_1$, and the 5-parameter spherical DAHA $S\mathscr H_{q,t_1,t_2,t_3,t_4}$ of type $(C^\vee_1, C_1)$. 
These algebras each have a \emph{polynomial representation}, which is an analogue of Verma modules in Lie theory. 

Terwilliger has given presentations of both spherical DAHAs, and combining these with results of \citet{Bullock&Przytycki} one can construct algebra maps $\Sk(\Sigma_{1,1}) \to SH_{q,t}$ and $\Sk(\Sigma_{0,4}) \to S\mathscr H_{q,\{t_i\}}$. 
This implies the skein algebras of these punctured surfaces act on the polynomial representations of the corresponding spherical DAHAs. 
These skein algebras also act on $\Sk(\mathcal H_2)$ since they 
map\footnote{Note that there are two copies of $\Sigma_{1,1}$ embedded in $\Sigma_{2,0}$, which means we have two commuting actions of the skein algebra $\Sk(\Sigma_{1,1})$ on $\Sk(\mathcal{H}_2)$.} 
to the skein algebra of the genus 2 surface via the surface maps in \cref{fig:introfigure}. 
%\JC{Maybe you want to say the actions come from these embeddings when you introduced Theorem 1 instead of here?}\PS{I think of Thm 1 as being about the whole genus 2 surface, and Thm 2 as being mainly about these subsurfaces, which is why I talked about the embedding here instead of before}

\begin{figure}[!h]
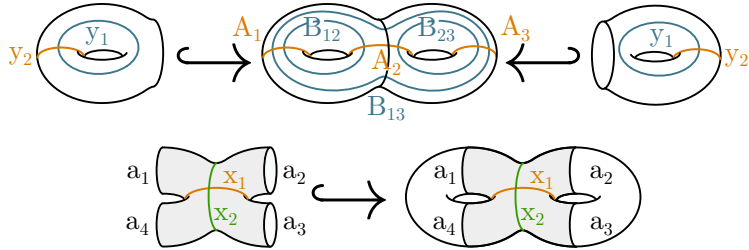

 \begin{center}
    \diagramhh{figures}{introemb}{0pt}{0pt}{0.3} 
\end{center}
    \caption{Loops and surface embeddings}\label{fig:introfigure}
\end{figure}

\noindent 
Using the structure constants from \cref{thm:intro1}, we compute how the skein module decomposes over the subalgebras corresponding to these subsurfaces in the following (see \cref{cor:irreducible}, \cref{thm:2a1decomp}, and \cref{cor:ccdecomp}):
\begin{introthm} As a module over $\Sk(\Sigma_{2,0})$, the skein module $\Sk(\mathcal H_2)$ is irreducible.
\begin{enumerate}
\item As a module over $\Sk(\Sigma_{1,1})\otimes \Sk(\Sigma_{1,1})$, the skein module decomposes as a direct sum
\[
\Sk(\mathcal H_2) = \bigoplus_{j \geq 0} P_j \otimes P_j
\]
where $P_j$ is the specialisation of the $A_1$ polynomial representation at $t=-q^{-j-2}$. \\
\item As a module over $\Sk(\Sigma_{0,4})$, the skein module decomposes as a direct sum 
\[
\Sk(\mathcal H_2) = \bigoplus_{i,k \geq 0} P'_{i,k}
\]
where $P'_{i,k}$ is the (unique) finite dimensional quotient of the spherical $(C^\vee_1, C_1)$ polynomial representation with parameter specialisations given in equation \eqref{eq:params}.
\end{enumerate}
\end{introthm}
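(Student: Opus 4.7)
The plan is to exploit the explicit matrix coefficients of \cref{thm:intro1} together with Terwilliger's presentations of the rank one spherical DAHAs: each decomposition is obtained by isolating a joint eigenspace of central boundary loops, then matching the residual action against the polynomial representation of the appropriate DAHA. Irreducibility is a separate but parallel argument using the fact that $(A_1,A_2,A_3)$ acts with simple joint spectrum on the theta basis.

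For irreducibility, I would argue that any nonzero submodule $N \subseteq \Sk(\mathcal{H}_2)$ contains a theta basis vector $n(i_0,j_0,k_0)$, since $A_1, A_2, A_3$ are simultaneously diagonalised with pairwise distinct joint eigenvalues (this follows from the quantum-integer formulas appearing in the proof of \cref{thm:intro1}). The matrix coefficients in \eqref{eq:nact} then show that $B_{12}$ and $B_{23}$ have nonvanishing entries linking $(i,j,k)$ to triples obtained by moving two coordinates by $\pm 1$; combined with spectral projections built from polynomials in the $A_i$, these moves let one hop from $n(i_0,j_0,k_0)$ to any other basis vector. The main point to check is a connectivity argument on $\mathbb{Z}_{\geq 0}^3$ showing that the relevant matrix coefficients do not vanish identically on the admissible lattice.

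For part~(1), the two copies of $\Sigma_{1,1} \subset \Sigma_{2,0}$ in \cref{fig:introfigure} share $A_2$ as their common boundary, so $A_2$ is central in each subalgebra. Its theta-basis eigenvalue depends only on $j$, so the $j$-eigenspace $V_j = \operatorname{span}\{n(i,j,k) : i,k \geq 0\}$ is a bi-submodule. Since the left copy of $\Sk(\Sigma_{1,1})$ acts only through operators shifting the $i$-index and the right copy only through those shifting $k$, the space $V_j$ factors as an external tensor product. I then identify each factor with $P_j$ by pulling back through the map $\Sk(\Sigma_{1,1}) \to SH_{q,t}$ and comparing the matrix coefficients of $A_1$ and $B_{12}$ with Terwilliger's formulas for the polynomial representation; the specialisation $t = -q^{-j-2}$ is forced by the $A_2$-eigenvalue on the $j$-eigenspace.

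Part~(2) is analogous but uses the dumbbell basis, which diagonalises $A_1$ and $A_3$. The subsurface $\Sigma_{0,4}$ is obtained by cutting $\Sigma_{2,0}$ along $A_1$ and $A_3$, so these loops become central boundary curves of $\Sk(\Sigma_{0,4})$, and their eigenvalues $(i,k)$ fix the four boundary parameters of $S\mathscr{H}_{q,\{t_i\}}$, yielding the specialisation \eqref{eq:params}. Each subspace $\operatorname{span}\{m(i,j,k) : j \geq 0\}$ is invariant, and I would match the resulting action on $j$ to the polynomial representation of the $(C_1^\vee, C_1)$ spherical DAHA via the map $\Sk(\Sigma_{0,4}) \to S\mathscr{H}_{q,\{t_i\}}$. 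The finite dimensional quotient $P'_{i,k}$ appears as the truncation of this polynomial representation, witnessed skein-theoretically by the vanishing of a $B$-matrix coefficient at the top value of $j$. The main obstacle in the plan is precisely this last step: checking that the parameter specialisation \eqref{eq:params} coincides exactly with the reducibility locus of the $(C_1^\vee, C_1)$ polynomial representation giving a finite dimensional quotient of the predicted dimension, while tracking all five DAHA parameters and reconciling signs and quantum-integer conventions between the theta and dumbbell normalisations through Terwilliger's presentation.
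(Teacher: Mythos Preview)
Your irreducibility argument and your approach to part~(2) are essentially the same as the paper's: joint simple spectrum of the $A$-curves on the appropriate basis, then a connectivity/hopping argument for irreducibility; for $\Sigma_{0,4}$, fixing the $(A_1,A_3)$-eigenvalue in the dumbbell basis and matching against the $(C_1^\vee,C_1)$ polynomial representation. One small correction for part~(2): the operator whose coefficient vanishes at the top value of $j$ is $x_1 = A_2$ (the nontrivial generator of $\Sk(\Sigma_{0,4})$ in the dumbbell basis), not a $B$-curve.

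Part~(1), however, has a genuine gap. The curve $A_2$ is \emph{not} the common boundary of the two embedded once-punctured tori; that boundary is the separating curve $X$ of \cref{fig:cycles2}, which is a different isotopy class ($A_2$ is non-separating and intersects both $B_{12}$ and $B_{23}$). Concretely, in the theta basis the formulas of \eqref{eq:nact} show that $B_{12}$ shifts both $i$ and $j$, so your proposed subspace $V_j = \operatorname{span}\{n(i,j,k)\}$ is \emph{not} invariant under the left copy of $\Sk(\Sigma_{1,1})$, and the claimed tensor factorisation collapses. The paper runs this argument in the dumbbell basis $m(i,j,k)$ instead: there $X$ acts diagonally with eigenvalue depending only on the middle index $j$, while $B_{12}$ and $B_{23}$ shift only $i$ and only $k$ respectively (see \cref{maction}), which is exactly what is needed for the external tensor product structure. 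The parameter specialisation $t = -q^{-j-2}$ is then read off from the $X$-eigenvalue via the image of the boundary loop under $\Sk(\Sigma_{1,1}) \to SH_{q,t}$, not from $A_2$. In short, you have swapped the roles of the two bases: the dumbbell basis is the correct one for part~(1), and once you make that switch your outline goes through along the lines of \cref{lem:isos} and \cref{thm:2a1decomp}.
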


We aren't aware of an explanation or construction of this module using classical DAHA theory, so we briefly summarise some of its unusual features here. Terwilliger's universal Askey-Wilson algebra $\Delta_q$ maps to the skein algebra of the punctured torus and to the skein alegbra of the 4-punctured sphere.
Therefore, there are 3 copies of $\Delta_q$ acting simultaneously on the space $R[B_{12},B_{23}, B_{13}]$ of polynomials in 3 variables. 
These algebras have nontrivial intersections which corresponds to the intersections of the respective subsurfaces. For example, the `Casimir' element for both the left and right copies of $\Delta_q$ correspond to the curve $A_2$ above; however, this curve corresponds to the Askey-Wilson operator in the middle copy of $\Delta_q$. 
Conversely, central elements in the middle copy of $\Delta_q$ act as the loops labelled $A_1$ and $A_3$, which correspond to the Macdonald operators in the left and right copies of $\Delta_q$. 
We also point out that these three subalgebras generate the skein algebra of the genus 2 surface (see \cref{cor:gen}), and that over this algebra, the skein module of the handlebody is irreducible (see \cref{cor:irreducible}). Finally, we note that the left and right copies of $\Delta_q$ contain the  multiplication operators $B_{12}$ and $B_{23}$ respectively. However, the $B_{13}$ multiplication operator isn't in any of the 3 subalgebras in question; instead, it has to be written as a fairly complicated expression involving generators of all three copies of $\Delta_q$ (see \cref{cor:dehn}).

Finally, we recall that a \emph{Leonard pair} is a finite dimensional vector space $V$ with two diagonalizable endomorphisms $A,B$ such that $A$ has a tridiagonal\footnote{A tridiagonal matrix only has nonzero entries which are on or adjacent to the diagonal.} matrix with respect to an eigenbasis of $B$, and vice-versa (see \cite{Ter01}). There is an extensive literature on Leonard pairs, and they have arisen in representation theory, combinatorics, orthogonal polynomials, and more (see, e.g. \cite{Ter03} and references therein). We show Leonard pairs also appear in skein theory; in particular, in \cref{cor:leonard} we show that each $P'_{i,k}$ is a Leonard pair with respect to the operators $x_1$ and $x_2$. It seems likely that these particular Leonard pairs have appeared before, e.g. in \cite{NT17}. However, using the topological point of view in the present paper, it is evident that $\oplus_{i,k \geq 0} P'_{i,k}$ is a module for $\Delta_q\otimes \Delta_q$, and it is not clear if this observation has appeared in the literature.

\subsection{The genus 2 double affine Hecke algebra}
If we combine the Terwilliger presentation of the $A_1$ spherical DAHA $SH_{q,t}$ with the Frohman-Gelca description of the skein algebra of the torus, it follows almost immediately that the skein algebra $\Sk(\Sigma_{1,0})$ is isomorphic to the $t=q=s$ specialisation $SH_{s,s}$ of the $A_1$ spherical DAHA. Furthermore, the $\SL(\mathbb Z)$ actions on both algebras agree, and the polynomial representation of the spherical DAHA in this specialisation is isomorphic to the skein module of the solid torus. These results are surprising since the objects on the DAHA side are defined in terms of explicit formulas, while the skein-theoretic definitions are purely topological.
Our second main motivation for the present paper was to generalise these results to genus 2.

In genus 2, this comparison was not possible until the work of \citet{arthamonov&shakirov2017}, who recently gave a very interesting proposal for a definition of the
\emph{genus 2 spherical DAHA}.
They define their algebra in terms of its action on a space with basis $\Psi_{i,j,k}$ where $(i,j,k)$ ranges over the set of  admissible triples\footnote{See \cref{def:adm}; this definition also comes up in the bases we use for the skein module.}. 
Their algebra depends on two parameters, $q$ and $t$, and we use our skein-theoretic results to prove the following (see \cref{thm:AScompare}): 
\begin{introthm}\label{thm:introspec}
The $q=t$ specialisation of the Arthamonov-Shakirov algebra is isomorphic to the image of the skein algebra $\Sk(\Sigma_{2,0})$ in the endomorphism ring of $\Sk(\mathcal H_2)$.
\end{introthm}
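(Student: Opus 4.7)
The plan is to identify the Arthamonov-Shakirov representation $V = \bigoplus_{(i,j,k)\text{ admissible}} R\Psi_{i,j,k}$ with the skein module $\Sk(\mathcal H_2)$ via an explicit diagonal change of basis $\Psi_{i,j,k} = \lambda(i,j,k)\, n(i,j,k)$, and then to check that, under this identification and after specialising $t=q$, the generators of the genus~2 spherical DAHA of Arthamonov and Shakirov act on $V$ by the same operators as the explicit skein loops act on $\Sk(\mathcal H_2)$. Since $\Sk(\Sigma_{2,0})$ is generated by $A_1, A_2, A_3, B_{12}, B_{23}$ and \cref{thm:intro1} records the matrix coefficients of all of these in the theta basis, matching these five operators against the (topologically motivated) AS generators is enough to conclude that the two subalgebras of $\End(\Sk(\mathcal H_2))$ coincide.

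To pin down the scalars $\lambda(i,j,k)$ I would first use the three commuting diagonal operators: by \cref{thm:intro1} the $A_i$ act on $n(i,j,k)$ with eigenvalues $-s^{2r+2} - s^{-2r-2}$ for the appropriate index $r \in \{i,j,k\}$, while the Arthamonov-Shakirov paper exhibits three commuting operators on $V$ acting on $\Psi_{i,j,k}$ by the same formulas after $t = q = s$. With the diagonal piece matched, the $\lambda(i,j,k)$ are forced up to a global scalar (which does not affect the image in $\End$) by comparing the off-diagonal coefficients of any single $B_{ij}$-type generator: both sides are tridiagonal in each index with coefficients built from $q$-integers controlled by the admissibility conditions, so the ratio of adjacent entries determines $\lambda(i,j,k)/\lambda(i\pm 1, j, k)$, and similarly in the other two directions. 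I expect the resulting $\lambda(i,j,k)$ to be a product of quantum factorials arising from the theta-graph normalisation of $n(i,j,k)$ versus the AS ``orthonormal'' normalisation of $\Psi_{i,j,k}$.

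With the $\lambda(i,j,k)$ fixed, the remainder of the proof is the direct verification that the three AS operators dual to the loops $B_{12}, B_{13}, B_{23}$ act on $\Psi_{i,j,k}$ by the matrix coefficients given in equation \eqref{eq:nact} and \cref{naction}. The main obstacle is bookkeeping: the AS formulas come from a modular-theoretic construction involving explicit $S$- and $T$-style matrices, whereas the skein formulas come from applying the Kauffman bracket skein relations to a concrete link diagram and reducing the resulting theta graphs via Jones-Wenzl projectors. A clean way to bridge the two is to rewrite both sides in terms of quantum $6j$-symbols (Racah coefficients), where the match should reduce to a standard orthogonality identity; alternatively, one can cross-check using the dumbbell basis $m(i,j,k)$, since its diagonalisation of $A_1, A_3$ together with the known action of $B_{12}$ (\cref{maction}) provides an independent handle on the change-of-basis data. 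Once the five skein generators are all in the image of the AS algebra and vice versa, the two images in $\End(\Sk(\mathcal H_2))$ agree, proving \cref{thm:introspec}.
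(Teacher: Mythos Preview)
Your plan is the same as the paper's: a diagonal rescaling $\Psi_{i,j,k}\leftrightarrow n(i,j,k)$ intertwining the six AS operators with the six loop operators, followed by a direct coefficient match. Two small corrections: the correct specialisation is $q=t=s^4$ (not $s$), so that $[n,m]_{q,t}=[n+m]_s$; and the correspondence carries a sign, $\mathrm{loop}\mapsto -\hat{\mathcal O}_{\mathrm{loop}}$, visible already in the $A_i$-eigenvalues $-s^{2r+2}-s^{-2r-2}$ versus $q^{r/2}t^{1/2}+q^{-r/2}t^{-1/2}$.

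The one substantive step you have glossed over is the \emph{well-definedness} of the rescaling. Reading off $\lambda(i{+}1,j{+}1,k)/\lambda(i,j,k)$ from $B_{12}$, and likewise from $B_{13}$ and $B_{23}$, gives three families of adjacent ratios on the admissibility graph; these must be mutually compatible before $\lambda$ exists at all. In the paper this is isolated as a separate lemma: one checks an identity among the specialised $C_{1,1}$ coefficients (their equation for $C_{1,1}(k,j{+}1,i{+}1)C_{1,1}(i,j,k)=C_{1,1}(i,j{+}1,k{+}1)C_{1,1}(k,j,i)$), which guarantees path-independence, and then writes down a closed product formula for $\alpha=\lambda^{-1}$ in terms of quantum integers. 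With $\alpha$ in hand, the verification is a straightforward comparison of $D_{a,b}$ against the $\alpha$-rescaled $C_{a,b}$; by the evident $S_3$-symmetry among $(i,j,k)$ a single $B$-computation suffices, and no $6j$-symbols or dumbbell basis are needed.
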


Thang Le \cite{Le21} has established the faithfulness of the action of the skein algebra of a (closed) surface on the skein module of the corresponding handlebody. 
Le's result combined with \cref{thm:introspec} immediately imply the following corollary (see \cref{cor:asisoskein}).

\begin{introcor}
Using the faithfulness result in \cite{Le21}, the skein algebra $\Sk(\Sigma_{2,0})$  is isomorphic to the $t=q$ specialisation of the Arthamonov--Shakirov genus 2 DAHA.
\end{introcor}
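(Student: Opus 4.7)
The plan is to obtain the corollary as a formal composition of two ingredients that are already in place: \cref{thm:introspec}, which identifies the $t=q$ specialisation of the Arthamonov--Shakirov algebra with the image of the skein algebra in the endomorphism ring of $\Sk(\mathcal H_2)$, and Le's faithfulness result from \cite{Le20}, which is explicitly assumed in the hypothesis. The whole point is that once these two statements are available, no further topological or algebraic input is required.

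First I would introduce the natural action map
\[
\rho \colon \Sk(\Sigma_{2,0}) \longrightarrow \End\bigl(\Sk(\mathcal H_2)\bigr)
\]
obtained by pushing links from a collar of $\partial \mathcal H_2 = \Sigma_{2,0}$ into the handlebody. By construction, $\rho$ factors as the canonical surjection $\Sk(\Sigma_{2,0}) \twoheadrightarrow \rho\bigl(\Sk(\Sigma_{2,0})\bigr)$ followed by the inclusion of the image into $\End(\Sk(\mathcal H_2))$. Applying \cite{Le20} to the pair $(\Sigma_{2,0}, \mathcal H_2)$ gives that $\rho$ is injective, so the canonical surjection is in fact an isomorphism $\Sk(\Sigma_{2,0}) \xrightarrow{\cong} \rho\bigl(\Sk(\Sigma_{2,0})\bigr)$.

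The second step is to compose this isomorphism with the isomorphism supplied by \cref{thm:introspec}, which identifies $\rho\bigl(\Sk(\Sigma_{2,0})\bigr)$ with the $t=q$ specialisation of the Arthamonov--Shakirov genus two spherical DAHA. The resulting composite is the asserted algebra isomorphism.

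There is essentially no obstacle in this argument beyond the two black-box inputs already cited: all of the combinatorial work identifying the image of $\rho$ with the Arthamonov--Shakirov algebra at $t=q$ was absorbed into \cref{thm:introspec}, and the delicate topological input, namely faithfulness of the handlebody action, is exactly the result of \cite{Le20} being assumed. Consequently the proof should occupy at most a few lines, amounting to little more than the observation that an injective ring homomorphism onto its image is an isomorphism onto that image.
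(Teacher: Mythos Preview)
Your proposal is correct and follows exactly the same approach as the paper: the paper simply states that Le's faithfulness result combined with \cref{thm:introspec} (equivalently \cref{thm:AScompare}) ``immediately'' yields the corollary, which is precisely the composition you spell out. If anything, you have written more detail than the paper does.
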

Arthamonov and Shakirov raised a number of questions about their algebra; in particular, they ask \cite[Pg. 17]{arthamonov&shakirov2017} whether their algebra is a flat deformation of the skein algebra. Our corollary above proves that the $t=q$ specialisation is isomorphic to the skein algebra;
however, to the best of our knowledge, it is still unknown whether their algebra is a \emph{flat} deformation of the skein algebra.

\subsection{Future directions}
The results described above lead or contribute to some interesting questions in both representation theory and knot theory. 
On the representation theory side, in \cite{terwilliger2013} Terwilliger has defined a \emph{universal Askey-Wilson algebra} $\Delta_q$, and showed that this algebra surjects onto the spherical DAHA $S \mathscr H_{q,\{t_i\}}$ (and hence, onto the $A_1$ spherical DAHA also). 
In fact, these surjections factor through the skein algebras (see \cref{rmk:universal}), so we have maps $\Delta_q \to \Sk(\Sigma_{0,4})$ and $\Delta_q \to \Sk(\Sigma_{1,1})$. 
This implies we have three maps from $\Delta_q$ to the skein algebra $\Sk(\Sigma_{2,0})$ of the genus 2 surface.
\begin{introq}
Do the three maps $\Delta_q \to \Sk(\Sigma_{2,0})$ deform to maps from $\Delta_q$ to the Arthamonov-Shakirov algebra? How does their module decompose over the subalgebras given by the images of these maps?
\end{introq}

In knot theory, there have been quite a number of papers conjecturing and/or proving a relationship between DAHAs and knot invariants for torus knots 
(beginning\footnote{In this paragraph, citations grouped together within square brackets have been sorted by chronological order with respect to arXiv posting, since some articles experienced significant publication delays.} 
with \cite{AS15, CerednikJones}), iterated torus knots (beginning with \cite{Sam19, CD16}), and iterated torus links \cite{CD17}. 
One of the recent successes in this direction was the work of \citeauthor{Mel17, HM19}~\cite{Mel17,HM19}
which proved  that the Khovanov--Roszansky homology for positive torus knots/links can be computed using the elliptic Hall algebra. 
Roughly, the (Euler characteristic of) Khovanov--Roszansky homology and the elliptic Hall algebra are the `$\mathfrak{gl}_\infty$' analogues of the Kauffman bracket knot polynomial and $A_1$ spherical DAHA, respectively, both of which correspond to $\mathfrak{sl}_2$. 

From our point of view, the  heuristic for these conjectures and results is that the torus knot $T_{m,n}$ is embedded in the torus, and can therefore be viewed as an operator on the skein module of the solid torus. Using the $\SL(\mathbb Z)$ action on the spherical DAHA, an analogous operator $\hat T_{m,n}$ can be defined as an element of the DAHA. The real surprise is that the operator $\hat T_{m,n}$ and its action in the polynomial representation can compute the \emph{Poincare polynomial} of the knot homology of the torus knot (instead of the Euler characteristic, which is computed by the skein algebra action). This leads to the following question, which was asked slightly differently in the last sentence of \cite{arthamonov&shakirov2017}:
\begin{introq}[\cite{arthamonov&shakirov2017}] Can the Poincare polynomial of Khovanov homology of genus 2 knots (such as the figure eight knot) be computed using the Arthamonov-Shakirov algebra?
\end{introq}
The present paper provides some evidence that this question has a positive answer: our \cref{thm:introspec} above shows that the answer is yes at the level of Euler characteristics. In other words, we have the following corollary, which is stated precisely in \cref{cor:Jones}.
\begin{introcor}
If $\alpha$ is a simple closed curve on $\Sigma_{2,0}$, then the Jones polynomial\footnote{The standard embedding of $\Sigma_{2,0}$ into $S^3$ allows us to interpret a curve on $\Sigma_{2,0}$ as a knot in $S^3$.} of $\alpha$ can be computed using the Arthamonov-Shakirov algebra.
\end{introcor}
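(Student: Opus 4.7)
The plan is to combine \cref{thm:introspec} with the standard Heegaard splitting $S^3 \cong \mathcal{H}_2 \cup_{\Sigma_{2,0}} \mathcal{H}_2'$. This decomposition induces a bilinear pairing
\[
\langle -,- \rangle \colon \Sk(\mathcal{H}_2) \otimes \Sk(\mathcal{H}_2') \to \Sk(S^3) \cong R,
\]
obtained by gluing skeins across the boundary. For a simple closed curve $\alpha \subset \Sigma_{2,0}$, pushing $\alpha$ into a collar of $\partial \mathcal{H}_2$ and then gluing recovers $\alpha$ as a framed knot in $S^3$, so its Kauffman bracket (hence its Jones polynomial, up to normalisation) equals $\langle \alpha \cdot \emptyset,\,\emptyset'\rangle$, where $\emptyset, \emptyset'$ are the empty skeins and $\alpha$ acts on $\Sk(\mathcal{H}_2)$ via the algebra map $\Sk(\Sigma_{2,0}) \to \End(\Sk(\mathcal{H}_2))$.

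Next, \cref{thm:introspec} identifies the operator coming from $\alpha$ with an explicit element $\hat{\alpha}$ of the $t=q$ specialisation of the Arthamonov--Shakirov algebra, and identifies $\Sk(\mathcal{H}_2)$ with the Arthamonov--Shakirov module on the basis $\Psi_{i,j,k}$ indexed by admissible triples. Under this identification the empty skein corresponds to $\Psi_{0,0,0}$, and iterating the Arthamonov--Shakirov action formulas yields an expansion
\[
\hat{\alpha}\,\Psi_{0,0,0} \;=\; \sum_{(i,j,k)} c_{i,j,k}(s)\,\Psi_{i,j,k}
\]
whose coefficients lie in the base ring $R$. To finish, one computes the pairing $\langle n(i,j,k),\emptyset'\rangle$: gluing the empty handlebody across $\Sigma_{2,0}$ converts $n(i,j,k)$ into the theta graph in $S^3$ whose three edges carry Jones--Wenzl projectors of colours $i,j,k$, a classical graph whose Kauffman bracket $\theta(i,j,k)$ has a closed form in quantum binomial coefficients. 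Linearity then expresses the Jones polynomial of $\alpha$ as $\sum c_{i,j,k}(s)\,\theta(i,j,k)$, entirely in terms of Arthamonov--Shakirov algebra data.

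The main obstacle is essentially bookkeeping rather than mathematical substance: one must pin down from the proof of \cref{thm:introspec} that the Arthamonov--Shakirov basis $\Psi_{i,j,k}$ is matched with the theta basis $n(i,j,k)$ of \cref{thm:intro1} (not, for instance, the dumbbell basis $m(i,j,k)$), and that the empty skein corresponds to $\Psi_{0,0,0}$ up to a normalisation that must be absorbed into the final formula. Once these identifications are recorded, and the framing convention for $\alpha$ (inherited from the surface) is reconciled with the Kauffman bracket's framing conventions, the corollary follows by simply combining the expansion of $\hat{\alpha}\,\Psi_{0,0,0}$ with the theta evaluation formula.
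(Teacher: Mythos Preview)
Your proposal is correct and follows essentially the same route as the paper. The paper phrases things via the single evaluation map $\mathrm{ev}\colon \Sk(\mathcal H_2)\to \Sk(S^3)=R$ induced by the standard embedding of the handlebody (quoting the Masbaum--Vogel theta formula for $\mathrm{ev}(n(i,j,k))$), rather than via a Heegaard pairing, but pairing against the empty skein in the complementary handlebody is exactly that evaluation map; both arguments then finish by invoking \cref{thm:introspec} to transport the computation of $\alpha\cdot n(0,0,0)$ to $\hat\alpha\cdot\Psi_{0,0,0}$ in the $t=q$ specialisation (the paper's rescaling $n(i,j,k)\leftrightarrow \alpha^{-1}(i,j,k)\Psi_{i,j,k}$ is precisely the normalisation you flag, and indeed $\alpha(0,0,0)=1$).
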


Finally, we mention that Hikami has also given a proposal for a genus 2 DAHA by gluing together the $A_1$ and $(C^\vee C_1)$ spherical DAHAs, and has conjectured \cite[Conj. 5.5]{Hik19} that his algebra can also be used to compute (coloured) Jones polynomials of knots embedded on the genus 2 surface. However, the relation between his construction and the Arthamonov-Shakirov construction is not clear, so the results of the present paper do not seem to be immediately applicable to this conjecture.

An outline of the paper is as follows. In \cref{sec:background}, we recall background material on skein theory and double affine Hecke algebras, and perform some initial computations. In \cref{sec:calculations}, we state theorems giving matrix coefficients of actions of certain loops on the skein module. In \cref{sec:Handdaha} we use DAHAs to describe how the skein module decomposes over certain subsurfaces, and we briefly discuss Leonard pairs. In \cref{ASlink} we show how our skein-theoretic calculations are related to the genus 2 DAHA defined by Arthamanov and Shakirov. Finally, the appendices contain diagrammatic proofs of the matrix coefficient computations.

\noindent \textbf{Acknowledgements:} We would like to thank the referee for their careful reading and helpful comments and remarks. We would like to thank Paul Terwilliger for his interest and his guidance through the literature regarding Leonard pairs and Thang Le for proving the faithfulness result we use and for helpful conversations. We would also like to thank Semeon Arthamonov, Matt Durham, David Jordan, Gregor Masbaum,  Shamil Shakirov for many helpful discussions over the past several years, and Thomas Wright for careful proofreading. Both authors were partially supported by the ERC grant 637618, the second author was partially supported by a Simons Foundation Collaboration Grant, and the first author was funded by a EPSRC studentship and the F.R.S.-FNRS.

% !TEX root = main.tex
\section{Background}\label{sec:background}
\subsection{Kauffman Bracket Skein Modules}
\label{back:wenzl}
Kauffman bracket skein modules are based on the Kauffman bracket:
\begin{defn}
Let \(L\) be a link without contractible components (but including the empty link). The \emph{Kauffman bracket polynomial} \(\langle L \rangle\) in the variable \(s\) is defined by the following local \emph{skein relations}:
\begin{align}
  \skeindiagram{leftnoarrow} &= s\; \skeindiagram{horizontal} + s^{-1} \; \skeindiagram{vertical}, \label{skeinrel_cross} \\ 
  \skeindiagram{circle} &= -s^{2} -s^{-2}. \label{skeinrel_loop}
\end{align}
\end{defn}
\noindent (These diagrams represent three links which are identical outside of the dotted circles and are as pictured inside the dotted circles.) It is an invariant of framed links and it can be `renormalised' to give the Jones polynomial. The Kauffman bracket can also be used to define an invariant of \(3\)-manifolds:
\begin{defn}
Let \(M\) be a 3-manifold, \(R\) be a commutative ring with identity and \(s\) be an invertible element of \(R\). The \emph{Kauffman bracket Skein module} \(\Sk(M; R)\) is the \(R\)-module of all formal linear combinations of links, modulo the Kauffman bracket skein relations pictured above.
\end{defn}

\begin{rmk}
For the remainder of the paper we will use the coefficient ring \(R := \mathbb Q(s)\).
\end{rmk}

We now define the Jones-Wenzl idempotents which one can use to construct a diagrammatic calculus for skein modules. 
This diagrammatic calculus is heavily used in \cref{appendix:cals} and \cref{appendix:finalloop} to calculate the loop actions used throughout this paper. 

\begin{defn}
The \emph{Jones-Wenzl idempotent} is depicted using a box and is defined recursively as follows:
\begin{align*}
    \diagramhhink{wenzl}{wenzl1}{18pt}{0pt}{0.6} &= \diagramhhink{wenzl}{wenzl2}{18pt}{0pt}{0.6} - \frac{\qint{n}}{\qint{n+1}} \diagramhhink{wenzl}{wenzl3}{18pt}{0pt}{0.6} \\
    \diagramhhink{wenzl}{wenzl4}{18pt}{0pt}{0.6} &= \diagramhhink{wenzl}{wenzl5}{18pt}{0pt}{0.6}
\end{align*}
where a strand labelled by an integer $n>0$ depicts $n$ parallel strands and by an integer $n \leq 0$ depicts $0$ strands. 
\end{defn}
In the definition above we have used the constants below.
\begin{defn}
Let \(n\) be an integer. The \emph{quantum integer} \(\qint{n}\) is
\[\qint{n} = \frac{s^{2n} - s^{-2n}}{s^2 - s^{-2}}\]
\end{defn}
\begin{rmk}
Note that \(\qint{0} =0\), \(\qint{1} =1\) and \(\qint{-n}= -\qint{n}\).
\end{rmk}

\begin{defn}
For any non-negative integer \(n\) the quantum factorial \(\qint{n}!\) is defined as
\[\qint{n}! = \qint{n} \qint{n-1} \dots \qint{1},\]
so in particular \(\qint{0}! = 1\).
\end{defn}

Jones-Wenzl idempotents are also used to define trivalent vertices which are used to describe generating sets of the skein module of a handlebody.
\begin{defn}\label{def:adm}
A triple \((a,b,c)\) is \emph{admissible} if \(a,b,c \geq 0\), \(a+b+c\) is even and \(|a-b| \leq c \leq a+b\).
We denote by \(\Ad \subseteq \mathbb{N}_0^3\) the set of all admissible triples.
\end{defn}
\begin{defn}[\cite{masbaum1994}]
Given an admissible triple \((a,b,c)\) one can define the \emph{3-valent vertex}:
\[\diagramhh{figures}{2valent}{0pt}{0pt}{0.6}\]
where \(i = \frac{(a+b-c)}{2}\), \(j= \frac{(a+c-b)}{2}\), \(k= \frac{(b+c-a)}{2}\) are integers as \((a,b,c)\) is admissible.
\end{defn}

\subsection{Skein Modules of Handlebodies}
\label{sec:skeinmodulebasis}
Using the trivalent vertices described in the previous section one can describe a generating set for the skein module of a handlebody.

\begin{defn}
Let \(\mathcal{H}_g\) denote the solid \(g\)-handlebody.
\end{defn}

\begin{thm}[\cite{lickorish1993, zhong2004}]
\label{thm:HandlebodyBasis}
A generating set of \(\Sk(\mathcal{H}_g)\) is given by 
\[
\diagramhh{figures}{nbasis}{0pt}{0pt}{0.3}
\]
where \(a_1, \dots, a_n\)  are non-negative integers which form admissible triples around every 3-valent node.
\end{thm}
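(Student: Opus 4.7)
The plan is to reduce an arbitrary framed link in $\mathcal{H}_g$ to a standard form supported on a neighbourhood of a spine graph, and then use the Jones--Wenzl calculus to turn that form into a linear combination of the displayed trivalent diagrams. Concretely, fix a graph spine $\Gamma \subset \mathcal{H}_g$ of the form indicated in the picture of \cref{thm:HandlebodyBasis}: $g$ loops attached via a tree with trivalent vertices. Since $\mathcal{H}_g$ deformation retracts onto $\Gamma$, any framed link $L$ is isotopic to a link lying in a regular neighbourhood $N(\Gamma)$. After projecting onto the surface of $N(\Gamma)$, $L$ becomes a collection of strands running along the edges of $\Gamma$, together with some crossings; the crossings can be resolved using the skein relation \eqref{skeinrel_cross}, so up to a scalar combination $L$ is a linear combination of diagrams with $n_e$ parallel strands along each edge $e$ of $\Gamma$, joined by some arbitrary pattern near each trivalent vertex.

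Next I would insert a Jones--Wenzl idempotent on each edge. Since the idempotent of size $n$ is $\mathrm{id}_n$ minus lower-order terms involving caps/cups that absorb into the neighbouring edges, one can recursively rewrite $n_e$ parallel strands on edge $e$ as a sum over $k \leq n_e$ of $k$-strand Jones--Wenzl boxes with some number of parallel non-idempotent strands peeling off into adjacent edges. After repeating this procedure at every edge, each summand has a single Jones--Wenzl idempotent on each edge, with some integer labels $a_1,\dots,a_n$. At each trivalent vertex, three Jones--Wenzl idempotents of colours $(a,b,c)$ are joined by a planar tangle with some number of cap/cup pairs; a standard consequence of the idempotency of the Jones--Wenzl and of the fact that $\mathrm{Hom}(a,b\otimes c)$ is one-dimensional when $(a,b,c)$ is admissible and zero otherwise, is that the resulting vertex is either a scalar multiple of the standard $3$-valent vertex of \cite{masbaum1994}, or is zero. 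This forces the admissibility condition and yields the displayed spanning set.

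The main obstacle is the book-keeping in the edge-reduction step: one must verify that repeated application of the Jones--Wenzl recursion, combined with isotopies pushing stray strands between adjacent edges, terminates in a linear combination of labelled trivalent graphs, and that no cycles are created in the reduction. This is typically handled by induction on the total number of strands $\sum_e n_e$, with the Jones--Wenzl definition chosen so that each step strictly reduces the number of non-idempotent strands on some edge, at the cost of introducing summands supported on configurations with strictly smaller total. A secondary subtlety is independence of the choice of spine $\Gamma$ and of the planar presentation of each vertex; this follows from the well-definedness of the $3$-valent vertex established in \cite{masbaum1994}, together with the fact that any two spines of $\mathcal{H}_g$ are related by a sequence of moves that can be absorbed into admissible relabellings.

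I would not attempt to prove linear independence, since the statement only asserts that this is a \emph{generating set}; linear independence (and hence that these form a basis, as in \cite{lickorish1993, zhong2004}) is usually proved separately by constructing a nondegenerate pairing with the skein module of the complementary handlebody in $S^3$, and is not needed for the uses of this theorem later in the paper.
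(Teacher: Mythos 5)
Your sketch is essentially sound, but note that the paper does not prove \cref{thm:HandlebodyBasis} at all: it is imported wholesale from \cite{lickorish1993, zhong2004}, and what you have written is a compressed version of the standard diagrammatic argument from those references. The steps you outline are the right ones: identify $\mathcal H_g$ with a thickening of a spine (equivalently of a planar surface), project and resolve crossings via \eqref{skeinrel_cross} and \eqref{skeinrel_loop} so that the module is spanned by crossingless diagrams running parallel to the edges of the spine, insert Jones--Wenzl idempotents edge by edge using $\mathrm{id}_n = f_n - (\text{terms with turnbacks})$ together with induction on the total number of strands, and finally observe that at each vertex a crossingless tangle joining three idempotents with no same-edge returns is forced (by the turnback-killing property and the one-dimensionality of the relevant $\mathrm{Hom}$ space) to be a scalar multiple of the Masbaum--Vogel vertex of \cite{masbaum1994}, which is exactly where admissibility enters. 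You are also right that only spanning is claimed, so independence need not be addressed here. The only in-paper argument touching this circle of ideas is \cref{lemma:isbasis}, and it takes a genuinely different route, special to $g=2$: it starts from the Bullock--Przytycki isomorphism $\Sk(\mathcal H_2)\cong R[B_{12},B_{23},B_{13}]$ of \cref{lemma:poly}, and shows that the change of basis from monomials $B_{12}^aB_{23}^bB_{13}^c$ to the theta elements $n(a+c,a+b,b+c)$ is unitriangular for the lexicographic order, which buys both spanning and linear independence at once, but relies on already knowing the polynomial-algebra structure; your reduction argument is more elementary in its inputs and works for every genus, which is why it is the proof used in the cited sources.

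Two small points of precision. First, ``projecting onto the surface of $N(\Gamma)$'' should be phrased as projecting to a planar (ribbon) surface $P$ with $\mathcal H_g\cong P\times[0,1]$, i.e.\ to the ribbon structure on the spine, not to the boundary surface of the handlebody. Second, in the bookkeeping step the quantity that strictly decreases is the total number of strands crossing a fixed system of meridian discs dual to the edges: a turnback pushed through a vertex either dies against an adjacent idempotent or strictly lowers this count, and closed loops created along the way are removed by \eqref{skeinrel_loop}; with that measure spelled out, your induction terminates and no separate ``no cycles'' argument is needed.
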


Other generating sets  of \(\Sk(\mathcal{H}_g)\) can be obtained by modifying the diagram in \cref{thm:HandlebodyBasis} at two  adjacent  nodes using  quantum \(6j\)-symbols \cite{kauffmanBook1994} as  the  change of  base  matrix. This change of basis is useful  for checking calculations, and it will also been needed in \cref{ASlink} as a different basis is used by Arthamonov and Shakirov.

\begin{thm}[Change of Basis \cite{kauffmanBook1994}]
\label{thm:ChangeofBasis}
If \((r,t,j)\) and \((s,u,j)\) are admissible triples then
\[
\smalldiagram{properties}{basis2} = \sum_a \left\{ \begin{smallmatrix}t & r & a \\ s & u & j \end{smallmatrix} \right\} \smalldiagram{properties}{basis1}
\]
where the sum is over all \(a\) such that \((r, s, a)\) and \((t, u, a)\) are admissible.
\end{thm}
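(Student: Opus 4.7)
The plan is to prove this by recognizing the identity as the defining change-of-basis relation for the quantum $6j$-symbol within the Jones--Wenzl diagrammatic calculus. First I would set up the finite-dimensional $R$-module $V_{r,s,t,u}$ consisting of skein classes of diagrams in a disk with four boundary arcs cabled by the Jones--Wenzl idempotents of weights $r, s, t, u$. Using the standard properties of those idempotents (absorption, killing of turnbacks, and the recursion of \cref{back:wenzl}) together with the non-degeneracy at generic $s$ of the bilinear pairing obtained by stacking one diagram against the mirror of another and closing up, one shows that $V_{r,s,t,u}$ is free of finite rank equal to the number of nonnegative integers $j$ for which both $(r,t,j)$ and $(s,u,j)$ are admissible; by symmetry this equals the number of $a$ for which both $(r,s,a)$ and $(t,u,a)$ are admissible.

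Second I would exhibit two natural bases of $V_{r,s,t,u}$. The ``$j$-basis'', obtained by fusing the $r,t$ strands and separately the $s,u$ strands to a common intermediate edge labeled $j$ via trivalent vertices, is precisely the collection of diagrams appearing on the left-hand side of the theorem, one per admissible $j$. The ``$a$-basis'', obtained analogously by fusing $r,s$ and $t,u$ to a common intermediate edge labeled $a$, is the collection of diagrams on the right-hand side. Spanning for each collection follows by iteratively applying the Jones--Wenzl recursion to reduce an arbitrary element of $V_{r,s,t,u}$ to a fusion pattern, and linear independence follows from the orthogonality relation, which states that two trivalent vertices with distinct admissible internal labels pair to zero under the bilinear form, while equal labels pair to a nonzero theta-network value.

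Third, the quantum $6j$-symbol $\left\{\begin{smallmatrix}t & r & a \\ s & u & j\end{smallmatrix}\right\}$ is by definition the coefficient of the $a$-basis element in the expansion of the $j$-basis element under this change of basis. The stated equation is then the tautological change-of-basis identity, with the sum naturally restricted to those $a$ for which the corresponding $a$-basis diagram exists, i.e.\ for which $(r,s,a)$ and $(t,u,a)$ are admissible. The main obstacle, if one wanted a fully self-contained derivation rather than citing Kauffman, lies in verifying that the two collections really are bases --- specifically, computing the theta-network values explicitly to confirm they are invertible in $R$ at generic $s$, from which orthogonality and hence both spanning and independence follow. The explicit formula for the $6j$-symbol itself can then be obtained by inverting the resulting Gram matrix, but this computation is not required for the statement of the theorem, which uses the symbols only as abstract coefficients.
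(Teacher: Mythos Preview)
Your approach is correct and is essentially the standard argument for this result. However, you should note that the paper does not actually prove this theorem: it is stated as a citation from \cite{kauffmanBook1994} with no accompanying proof, so there is nothing to compare against. The paper treats the change-of-basis formula and the quantum $6j$-symbols as black-box input from the literature, used only as a tool (in \cref{appendix:finalloop}) to cross-check the direct computation of the $A_2$ action.

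Your outline---identifying the disk skein space $V_{r,s,t,u}$ as finite-dimensional with two natural trivalent bases, and defining the $6j$-symbol as the change-of-basis coefficient---is exactly the standard derivation given in Kauffman's book and in \cite{masbaum1994}. The one substantive point you flag, namely invertibility of the theta-network values over $R = \mathbb{Q}(s)$, is indeed the only nontrivial step, and it follows from the explicit product formula for $\theta(a,b,c)$ (a ratio of quantum factorials, hence a unit at generic $s$). So your proposal is sound; it simply supplies a proof where the paper is content to cite one.
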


Summarising the discussion so far, we have two bases of skein module of the genus 2 handlebody which are depicted in \cref{fig:bases} and are related via the change of basis formula given in \cref{thm:ChangeofBasis}. The fact that these are bases (and not just generating sets) has been well-known to experts for a long time; we have, however, not found a precise reference so we sketch a proof in \cref{lemma:isbasis} below.
\begin{figure}[!h]
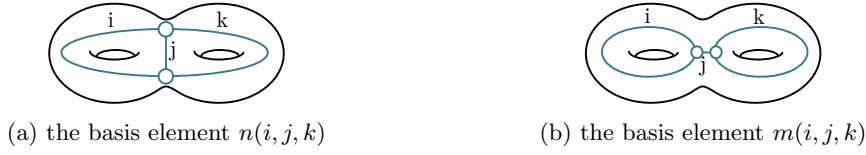

\centering
\begin{subfigure}{.4\textwidth}
  \centering
  \diagramhh{figures}{2basis}{0pt}{0pt}{0.3}
  \caption{the basis element \(n(i, j, k)\)} %\JC{This has swapped.}}
\end{subfigure}
\hspace{5mm}
\begin{subfigure}{.4\textwidth}
  \centering
  \diagramhh{figures}{2basis2}{0pt}{0pt}{0.3}
  \caption{the basis element \(m(i, j, k)\)}
\end{subfigure}
\caption{Bases for the skein module of the handlebody}\label{fig:bases}
\end{figure}

%\PS{can we write the explicit formula relating $m(i,j,k)$ and $n(i,j,k)$?} \JC{If you want.}

In fact, the skein of the solid $2$-torus is an algebra, since the handlebody is diffeomorphic to an interval crossed with a twice-punctured disc. We recall its algebra structure in the following.
\begin{lem}[{\cite[Prop. 1(6)]{Bullock&Przytycki}}]\label{lemma:poly}
As an algebra, $\Sk (\mathcal H_2)$ is isomorphic to $R[B_{12}, B_{23}, B_{13}]$ (using the notation of \cref{fig:cycles}), a polynomial algebra in 3 variables.
\end{lem}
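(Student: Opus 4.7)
The plan is to exploit the diffeomorphism $\mathcal{H}_2 \cong \Sigma_{0,3} \times [0,1]$, where $\Sigma_{0,3}$ is the pair of pants (thrice-punctured sphere). Under this identification, the skein module acquires an algebra structure via stacking in the $[0,1]$ direction, so $\Sk(\mathcal{H}_2) \cong \Sk(\Sigma_{0,3})$ as $R$-algebras. The three curves $B_{12}, B_{13}, B_{23}$ of \cref{fig:introfigure}, originally drawn on $\Sigma_{2,0}$, can be pushed into the interior of the handlebody; there they are isotopic to the three boundary-parallel simple closed curves of the $\Sigma_{0,3}$-slice.

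The first step is to verify commutativity. The three boundary components of $\Sigma_{0,3}$ are pairwise disjoint, so the three curves $B_{12}, B_{13}, B_{23}$ can be realized simultaneously as disjoint representatives. Stacking disjoint curves in the $[0,1]$ direction introduces no crossings, so no applications of the skein relation \eqref{skeinrel_cross} are needed and the products in either order agree. This yields a well-defined $R$-algebra homomorphism
\[
\phi : R[x_1, x_2, x_3] \longrightarrow \Sk(\mathcal{H}_2), \qquad x_1 \mapsto B_{12},\ x_2 \mapsto B_{13},\ x_3 \mapsto B_{23}.
\]

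The second step is surjectivity of $\phi$. Any link in $\Sigma_{0,3} \times [0,1]$ can be presented by a generic projection to $\Sigma_{0,3}$; repeated application of the skein relations \eqref{skeinrel_cross} and \eqref{skeinrel_loop} reduces the link to an $R$-linear combination of vertically stacked disjoint unions of essential simple closed curves on $\Sigma_{0,3}$. Since $\Sigma_{0,3}$ admits exactly three isotopy classes of essential simple closed curves (each parallel to a boundary component), every such stacked union is a monomial in $B_{12}, B_{13}, B_{23}$ and hence in the image of $\phi$.

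The main obstacle is the injectivity of $\phi$, i.e.\ algebraic independence of the three generators. The cleanest route is a Hilbert-series comparison. By \cref{thm:HandlebodyBasis} the skein module is $R$-free with basis the admissible-triple elements $n(i,j,k)$, so one needs to exhibit a grading for which the generating function of admissible triples matches $\sum_{a,b,c \ge 0} q^{f(a,b,c)}$ for the corresponding grading on the polynomial ring. A natural such grading is the one inherited from the number of boundary intersections with a meridian disc, which on the skein-module side corresponds to a linear function of $(i,j,k)$ and on the polynomial side to $a+b+c$; a direct bijective check then forces $\ker \phi = 0$. An alternative, more conceptual argument passes to the classical limit: at $s=-1$, the skein algebra $\Sk(\Sigma_{0,3})$ is the coordinate ring of the $\SL(\mathbb{C})$-character variety of $\pi_1(\Sigma_{0,3}) = F_2$, identified by Fricke--Klein--Vogt with $\mathbb{C}^3$ via the three boundary traces, after which a standard flat-deformation argument lifts independence to generic $s$. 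Either way, the essential technical content lives in this dimension-matching step, which is the only place where the skein relations are used in a nontrivial way.
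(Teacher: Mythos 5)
The paper does not actually prove this lemma: it is quoted from Bullock--Przytycki, and their Prop.~1 rests in turn on Przytycki's theorem that for a surface $F$ the module $\Sk(F\times[0,1])$ is \emph{free} over $R$ on the set of multicurves in $F$. Your identification $\mathcal H_2\cong\Sigma_{0,3}\times[0,1]$, the commutativity argument (the three boundary-parallel curves admit disjoint representatives, so stacking in either order gives isotopic links), and the surjectivity argument (resolve crossings, delete trivial components, and note that every essential simple closed curve in a pair of pants is boundary-parallel, so every multicurve is a monomial in $B_{12},B_{13},B_{23}$) are all correct and are exactly the easy half of the standard proof.

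The gap is in injectivity, which you rightly identify as the essential content but do not actually establish. Your Hilbert-series route assumes that the elements $n(i,j,k)$ form an $R$-basis; but \cref{thm:HandlebodyBasis} asserts only a generating set, and in this paper the basis statement is \cref{lemma:isbasis}, whose proof is deduced \emph{from} \cref{lemma:poly} via a triangular change of basis --- so within the paper's logic your argument is circular, and externally the freeness of that spanning set is precisely the nontrivial theorem being invoked. The classical-limit route does not close the gap either: Fricke's theorem together with Bullock's trace map does give algebraic independence of $B_{12},B_{13},B_{23}$ at $s=-1$, but the ``standard flat-deformation argument'' needs the skein module over $\mathbb{Z}[s^{\pm 1}]$ (or $\mathbb{Q}[s^{\pm 1}]$) to be free, or at least to have no $(s+1)$-torsion: a putative relation over $\mathbb{Q}(s)$ only yields, after clearing denominators and killing torsion, a relation whose coefficients may all be divisible by $s+1$, and one cannot divide through inside the module without torsion-freeness --- which is again exactly the content of Przytycki's freeness theorem. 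So the decisive step remains unproven; the honest fix is to cite Przytycki's freeness-of-multicurves theorem (as the paper implicitly does through Bullock--Przytycki) or to reproduce its diagrammatic argument that resolving crossings gives a well-defined expansion in the multicurve basis.
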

In the present notation, we have
\begin{align*}
B_{12} &= n(1,1,0) = m(1,0,0),\\
B_{23} &= n(0,1,1) = m(0,0,1),\\
B_{13} &= n(1,0,1) = m(1,2,1) + \frac 1 {\qint{2}} m(1, 0, 1)
\end{align*}
(In terms of the dumbbell basis $m(i,j,k)$, the expression for the loop $B_{13}$  can be computed using the Jones-Wenzl recursion.) 
Since $\Sk(\mathcal H_2)$ is isomorphic to a polynomial algebra, this gives us a natural monomial basis. We can partially describe the change of basis matrix between the monomial basis and our bases $m(i,j,k)$ and $n(i,j,k)$ as follows. Let $S_n(x)$ be the Chebyshev polynomials, which are determined uniquely by the condition
\[
S_n(X+X^{-1}) = \frac{X^{n+1}-X^{-n-1}}{X-X^{-1}}
\]
We then have the following identities: %\PS{fix this}
\[
m(i,0,0) = n(i,0,0) = S_i(B_{12}), \quad \quad n(k,0,k) = S_k(B_{13}),\quad \quad m(0,0,k) = n(0,0,k) = S_k(B_{23})
\]
In general the change of basis matrix between monomials in $B_{\ell \ell'}$ and either $m(i,j,k)$ or $n(i,j,k)$ is determined uniquely by the formulas in \cref{naction}, which express the action of $B_{\ell \ell'}$ as a matrix in the $n(i,j,k)$ basis. However, the change of basis matrices are not so easy to write explicitly outside of the cases above.

As mentioned above, the following lemma is well-known to experts, but for the convenience of the reader we provide a sketch of a proof. 
\begin{lem}\label{lemma:isbasis}
The set $n(i,j,k)$ is a basis for $\Sk(\mathcal H_2)$.
\end{lem}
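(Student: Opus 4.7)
The plan is to reduce the lemma to a triangular change-of-basis computation via the polynomial algebra description in \cref{lemma:poly}. Parametrise admissible triples by $\mathbb{N}_0^3$ via $(i,j,k) \leftrightarrow (a,b,c) := \bigl((i+j-k)/2,\, (i-j+k)/2,\, (-i+j+k)/2\bigr)$, so that the generating set $\{n(i,j,k)\}$ from \cref{thm:HandlebodyBasis} and the monomial basis $\{B_{12}^a B_{13}^b B_{23}^c\}$ of $R[B_{12}, B_{13}, B_{23}]$ are indexed by the same set. Since \cref{thm:HandlebodyBasis} already gives spanning, only linear independence remains.

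Filter $\Sk(\mathcal H_2)$ on the skein side by the total label $i+j+k = 2(a+b+c)$ and on the polynomial side by twice the polynomial degree. The key claim is that
\[
n(i,j,k) \;=\; \lambda_{i,j,k}\, B_{12}^{a}\, B_{13}^{b}\, B_{23}^{c} \;+\; \text{(strictly lower filtration degree)}
\]
for some nonzero scalar $\lambda_{i,j,k} \in R$. Granting this, the change-of-basis matrix between $\{n(i,j,k)\}$ and the monomial basis is block upper-triangular with invertible diagonal blocks, and so the $n(i,j,k)$ form a basis precisely because the monomials do.

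To establish the key claim, compute $B_{12}^{a}\, B_{13}^{b}\, B_{23}^{c}$ directly as a stacked skein. Its underlying parallel-strand configuration is exactly the shape of $n(i,j,k)$ with the Jones-Wenzl projectors at the two trivalent vertices replaced by bundles of parallel strands, modulo the crossings introduced when $B_{13}$ strands pass over strands of $B_{12}$ and $B_{23}$. The two sources of discrepancy between this configuration and $n(i,j,k)$---the Kauffman resolution \eqref{skeinrel_cross} of each crossing and the defining Jones-Wenzl recursion at each vertex---each produce one ``identity'' summand preserving the parallel-strand shape (with coefficient a nonzero power of $s$, respectively $1$) together with turnback summands; every turnback strictly reduces some edge label and therefore drops the filtration degree. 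The leading coefficient $\lambda_{i,j,k}$ is thus a nonzero power of $s$. The main bookkeeping step is to verify that the filtration is well-defined and monotone under multiplication by the $B_{\ell\ell'}$, which reduces to the observation that inserting a turnback on any edge of the theta graph decreases $i+j+k$ by at least $2$; this lets the induction on $i+j+k$ run cleanly and delivers the claimed top-degree identification.
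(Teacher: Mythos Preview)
Your proposal is correct and follows essentially the same triangularity argument as the paper: both compare the theta network $n(i,j,k)$ to the monomial basis via the Jones--Wenzl recursion, with you filtering by total label $i+j+k$ where the paper uses lexicographic order (either works, since a turnback at a trivalent vertex lowers two edge labels by one each). One minor correction: the curves $B_{12}$, $B_{13}$, $B_{23}$ can be drawn pairwise \emph{disjointly} in the twice-punctured disc, so no crossings are introduced when stacking and the leading coefficient $\lambda_{i,j,k}$ is exactly $1$, as the paper observes.
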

\begin{proof}
 
By \cref{lemma:poly}, the set $\{B_{12}^a B_{23}^b B_{13}^c\}$ (for $a,b,c \in \mathbb N$) is a basis for $\Sk(\mathcal H_2)$. Using the recursion defining the Jones-Wenzl idempotents, one can see that $B_{12}^a B_{23}^b B_{13}^c =n(a+c,a+b,b+c)$ plus lower order terms (with respect to the lexicographic order on triples of nonegative integers). 
%\JC{shouldn't they be integers?}\PS{I guess I'm not sure whether 0 is a natural number, so I reworded it :-)} ). 
This shows the matrix converting from the monomial basis to the theta basis is upper-triangular, with ones on the diagonal. (The fact that the diagonal entries are $1$ follows from the fact that the recursion defining the Jones-Wenzl idempotents has a 1 as the first coefficient.)
\end{proof}

\subsection{Skein Algebras of Simple Punctured Surfaces}
If \(\Sigma\) is a surface then the skein algebra \(\Sk(\Sigma \times [0,1]; R)\) forms an algebra with multiplication given by stacking the links on top of each other to obtain a link in \(\Sigma \times [0,2]\), then rescaling the second coordinate to obtain \(\Sigma \times [0,1]\) again.

\begin{defn}
Let \(\Sigma_{g,n}\) denote the surface with genus $g$ and $n$ punctures.
\end{defn}

Presentations are known for skein algebras of a small number of surfaces. We shall use the presentations for the four-punctured sphere $\Sigma_{0,4}$ and \(1\)-punctured torus $\Sigma_{1,1}$, which we recall below. These presentations all use the $q$-Lie bracket, defined as follows:
\[
[a,b]_q := qab - q^{-1} ba
\]

Let \(a_i\) denote the loops around the four punctures of \(\Sigma_{0,4}\), and let \(x_i\) denote the loops around punctures 1 and 2, 2 and 3, 1 and 3 respectively (see \cref{figure:FPuncSphereEdges}). If curve $x_i$ separates $a_i,a_j$ from $a_k, a_\ell$, let
$ p_i = a_i a_j + a_k a_\ell$. Explicitly,
\[
p_1 = a_1 a_2 + a_3 a_4,\quad \quad p_2 = a_2 a_3 + a_1 a_4,\quad \quad p_3 = a_1 a_3 + a_2 a_4
\]
We now recall the following theorem\footnote{We have corrected a sign error in the first relation which appears in the published version of the paper \cite{Bullock&Przytycki}.} of Bullock and Przytycki.

\begin{thm}[\cite{Bullock&Przytycki}]
\label{thm:presentationofsphereskein}
As an algebra over the polynomial ring $R[a_1,a_2,a_3,a_4]$, the Kauffman bracket skein algebra \(\Sk(\Sigma_{0,4})\) has a presentation with generators \(x_1,\,x_2,\,x_3\) and  relations
\begin{align*}
    \left[x_i, x_{i+1}\right]_{s^2} &= (s^4 - s^{-4})x_{i+2} + (s^2 - s^{-2})p_{i+2} \text{ (indices taken modulo 3)}; \\
    \Omega_K &= (s^2 +s^{-2})^2 - (p_1 p_2 p_3 p_4 + p_1^2 + p_2^2 + p_3^2 + p_4^2);
\end{align*}
where we have used the following `Casimir element':
\[\Omega_K := -s^2 x_1 x_2 x_3 + s^4 x_1^2 + s^{-4} x_2^2 + s^4 x_3^2 + s^2 p_1 x_1 + s^{-2}p_2 x_2 + s^2 p_3 x_3.\]
\end{thm}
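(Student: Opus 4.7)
The plan is to prove the presentation in three phases: generation, verification of relations, and completeness. For generation, I would use the fact that every essential simple closed curve on $\Sigma_{0,4}$ is either peripheral (so coincides with one of the $a_i$) or, up to the action of the mapping class group generated by Dehn twists about $x_1, x_2, x_3$, coincides with one of $x_1, x_2, x_3$. Since $\Sk$ of a surface is spanned by disjoint unions of simple closed curves, and products of curves can be resolved using the Kauffman skein relation at each intersection point, this is enough to conclude that $x_1, x_2, x_3$ generate $\Sk(\Sigma_{0,4})$ over $R[a_1, \ldots, a_4]$.

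For the relations, each is verified by a direct skein-theoretic computation. In the product $x_i x_{i+1}$, the two curves meet transversely in exactly two points, so applying the skein relation at both crossings resolves the product into four diagrams. Collecting these yields a multiple of $x_{i+2}$ together with a boundary contribution that, once peripheral loops are closed off via the second Kauffman relation, assembles into $p_{i+2}$. A parallel calculation for $x_{i+1} x_i$ gives the conjugate expression; weighting the difference by $s^2$ and $s^{-2}$ produces the stated $q$-commutator. The Casimir identity $\Omega_K = (s^2+s^{-2})^2 - (\cdots)$ comes either from an analogous but longer diagrammatic reduction of $x_1 x_2 x_3$, or from the observation that at $s=-1$ it specialises to the classical Fricke cubic hypersurface equation among traces on $\Sigma_{0,4}$; the quantum corrections in $s$ are forced by the requirement that $\Omega_K$ lie in the central subring $R[a_1,\ldots,a_4]$, and so are pinned down by a finite comparison.

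For completeness, I would filter $\Sk(\Sigma_{0,4})$ by total geometric intersection number with the three curves $x_i$. The $q$-commutator relations show that the associated graded algebra is commutative, and the Casimir relation then cuts it down to a hypersurface in $(x_1,x_2,x_3)$-space over $R[a_1,\ldots,a_4]$. Comparing to the known description of the relative $\SL(\mathbb{C})$-character variety of $\Sigma_{0,4}$ as a cubic hypersurface (Fricke--Vogt--Magnus), and using flatness of the skein algebra as a deformation of this coordinate ring, shows that no further relations appear. The main obstacle is precisely this last step: the $q$-commutator relations reintroduce the third generator $x_{i+2}$ together with the peripheral polynomials $p_{i+2}$, so rewriting is not confluent in a naive sense and must be controlled by an induction on the classical filtration. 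I would model this portion of the argument on the Bullock--Przytycki proof or on Magnus's classical treatment of the Fricke identity, taking care with the bookkeeping of $s$-powers picked up at each crossing resolution.
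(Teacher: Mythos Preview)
The paper does not prove this theorem: it is quoted verbatim from \cite{Bullock&Przytycki} (with a footnoted sign correction) and used as a black box. There is therefore no ``paper's own proof'' to compare against; your proposal is really a sketch of the Bullock--Przytycki argument itself.

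As a sketch of that argument, your generation and relation-verification steps are broadly correct in spirit. The completeness step, however, has a genuine circularity: you invoke ``flatness of the skein algebra as a deformation of this coordinate ring'' to rule out extra relations, but flatness over $R$ is essentially equivalent to the presentation you are trying to establish. Bullock and Przytycki do not argue this way. They instead exhibit an explicit $R[a_1,\ldots,a_4]$-module basis of $\Sk(\Sigma_{0,4})$ consisting of ordered monomials $x_1^\alpha x_2^\beta x_3^\gamma$ with $\gamma \in \{0,1\}$ (using the cubic relation to reduce any $x_3^2$), and then check directly that the relations suffice to reduce an arbitrary word to a linear combination of such monomials. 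The size of this spanning set is compared to a known linear basis of the skein module (multicurves, or equivalently the classical coordinate-ring basis at $s=-1$), and the counts match, so no further relations can hold. If you want a self-contained argument, replace the flatness appeal with this explicit normal-form/counting step.
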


\begin{figure}[h]
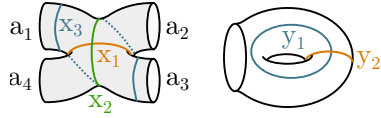

\centering
\diagramhh{figures}{skeinpres}{0pt}{0pt}{0.3}
\caption{The generating loops for $\Sk(\Sigma_{0,4})$ and $\Sk(\Sigma_{1,1})$}
\label{figure:FPuncSphereEdges}
\end{figure}
Let \(y_1\) and \(y_2\) denote the loops around the meridian and longitude respectively of a torus. These loops cross once and resolving this crossing gives \(y_1 y_2 = s y_3 + s^{-1} z\). 

\begin{thm}[\cite{Bullock&Przytycki}]
\label{thm:torusskeinpres}
As an algebra over $R$, the Kauffman bracket skein algebra \(\Sk(\Sigma_{1,1})\) has a presentation with generators \(y_1, y_2, y_3\) and relations
\[[y_i, y_{i+1}]_{s} = (s^2-s^{-2}) y_{i+2} \text{ (indices taken modulo 3)}.\]\end{thm}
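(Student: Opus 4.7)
The plan is to prove the presentation in three steps: verify the relations diagrammatically, show that $y_1, y_2, y_3$ generate $\Sk(\Sigma_{1,1})$, and match a PBW-type spanning set on the abstract side to the multicurve basis of the skein algebra.

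For the first two steps (which I would do simultaneously), note that $y_i$ and $y_{i+1}$ cross transversally in a single point on $\Sigma_{1,1}$. Applying the Kauffman skein relation \eqref{skeinrel_cross} to this crossing in each of the two orderings gives
\[
y_i y_{i+1} = s\, y_{i+2} + s^{-1} z, \qquad y_{i+1} y_i = s^{-1} y_{i+2} + s\, z,
\]
where $z$ is the `other' smoothing (a loop parallel to the puncture; the same in both orderings, since switching the crossing over/under merely exchanges the two coefficients $s$ and $s^{-1}$). Eliminating $z$ recovers exactly $[y_i, y_{i+1}]_s = (s^2 - s^{-2}) y_{i+2}$. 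Re-reading the same two equations also expresses $z$ as an element of the subalgebra generated by $y_1, y_2, y_3$; combined with the standard Chebyshev-type recursions expressing any slope-$(p,q)$ simple closed curve in terms of $y_1, y_2, y_3$, this shows that the $y_i$ generate. We therefore have a surjective $R$-algebra homomorphism $\phi \colon A \twoheadrightarrow \Sk(\Sigma_{1,1})$ from the abstract algebra $A$ given by the presentation.

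For the third step, I would treat each relation as a rewriting rule reducing one of $y_i y_{i+1},\, y_{i+1} y_i$ to the other plus a degree-one term in $y_{i+2}$, and show inductively that $A$ is $R$-spanned by ordered monomials $y_1^a y_2^b y_3^c$ with $a, b, c \geq 0$. To conclude $\phi$ is injective, I would exhibit the images $\phi(y_1^a y_2^b y_3^c)$ as $R$-linearly independent in $\Sk(\Sigma_{1,1})$: with respect to a suitable filtration, the leading term of $y_1^a y_2^b y_3^c$ should be a distinct simple multicurve, so the change of basis from ordered monomials to the multicurve basis is upper-triangular with unit diagonal, in direct analogy with \cref{lemma:isbasis} of the excerpt.

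The main obstacle is this last triangularity step, which requires identifying precisely the leading multicurve of each ordered monomial. A cleaner alternative, which bypasses the diagrammatic analysis, is to apply Bergman's diamond lemma purely on the abstract side: the presentation has a single cubic overlap ambiguity $y_3 y_2 y_1$, and one must verify that reducing it via the $(3,2)$- versus the $(2,1)$-relation first yields the same element of $A$ --- a finite polynomial identity in $R$. Together with the classical fact that at $s = -1$ the skein algebra specialises to the polynomial ring $\mathbb{C}[y_1, y_2, y_3]$ (the loop around the puncture being a polynomial in the $y_i$ via the Fricke trace identity), this gives that $A$ and $\Sk(\Sigma_{1,1})$ have equal graded dimension, forcing the surjection $\phi$ to be an isomorphism.
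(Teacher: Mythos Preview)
The paper does not prove this theorem at all: it is stated as a result of Bullock and Przytycki and simply cited, with no argument given. So there is no proof in the paper to compare your proposal against.

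As a standalone sketch, your first two steps are fine: the single-crossing resolution of $y_i y_{i+1}$ and $y_{i+1} y_i$ does produce the relation exactly as you wrote, and generation by $y_1,y_2,y_3$ follows by the Dehn-twist argument (this is precisely the mechanism behind \cref{lem:dtoskein} and Santharoubane's criterion quoted just above it).

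Your injectivity step, however, needs more care. Approach (a) --- upper-triangularity of ordered monomials against the multicurve basis --- is the standard route and is essentially how Bullock--Przytycki argue; you are right that the work lies in pinning down the leading multicurve of $y_1^a y_2^b y_3^c$, but this is a genuine computation, not a formality, and you have not indicated what filtration on multicurves makes it upper-triangular. Approach (b) has a real gap. The diamond-lemma half is fine: there is indeed a single overlap $y_3 y_2 y_1$, and resolving it is a finite check. But the specialisation half does not work as stated. Over $R=\mathbb{Q}(s)$ (the base ring fixed in the paper) one cannot set $s=-1$; you would have to redo everything over $\mathbb{Z}[s^{\pm 1}]$. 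Even there, ``equal graded dimension'' is not automatic: you would need a filtration on $\Sk(\Sigma_{1,1})$ by finite-rank free pieces, compatible with $\phi$, whose ranks match those of the degree filtration on $A$. Knowing only that $\phi_{-1}$ is an isomorphism of countably-infinite-dimensional spaces does not force $\phi$ to be injective. Producing such a filtration on the skein side and checking rank equality degree by degree is, in the end, the same work as the triangularity argument in (a), so (b) does not actually bypass the obstacle you identified.
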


The loop \(p\) around the puncture of \(\Sigma_{1,1}\) is obtained from the resolution of the crossing of \(z\) and \(y_3\) using the identity \(z y_3 = s^2 y_1^2 + s^{-2} y^2_2 -s^2 -s^{-2} + p\).

\subsection{Generation of skein algebras}
In this subsection we briefly discuss sets of curves which generate the skein algebra $\Sk(\Sigma_{2,0})$. In \cite{San18}, Santharoubane gave the following very useful criteria for showing a set of curves generates the skein algebra of a surface.

\begin{thm}[\cite{San18}]
Let $\{\gamma_j\}_{j \in I}$ be a finite set of non-separating simple closed curves such that the following conditions hold:
\begin{enumerate}
    \item For any $i,j \in I$, the curves $\gamma_i$ and $\gamma_j$ intersect at most once,
    \item The set of Dehn twists around the curves $\gamma_i$ generate the mapping class group of $\Sigma$.
\end{enumerate}
Then the curves $\gamma_i$ generate the skein algebra of $\Sigma$.
\end{thm}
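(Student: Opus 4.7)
My plan is to show that the $R$-subalgebra $A \subseteq \Sk(\Sigma)$ generated by $\{\gamma_j\}_{j \in I}$ is all of $\Sk(\Sigma)$. The argument proceeds in three stages: first, show $A$ is invariant under the mapping class group action on $\Sk(\Sigma)$; second, deduce that every non-separating simple closed curve lies in $A$ via the transitive mapping class group action on such curves; third, reduce separating curves and arbitrary multicurves to non-separating simple closed curves using skein relations.

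For the invariance, observe that any diffeomorphism of $\Sigma$ extends by the identity to $\Sigma \times [0,1]$ and thereby induces an $R$-algebra automorphism of $\Sk(\Sigma)$. Since $A$ is generated as an algebra by the $\gamma_j$, it suffices to verify that $T_{\gamma_i}^{\pm 1}(\gamma_j) \in A$ for each pair $i,j \in I$. This is obvious when $\gamma_i$ and $\gamma_j$ are disjoint. When they cross once, one computes both orderings $\gamma_i \gamma_j$ and $\gamma_j \gamma_i$ in $\Sk(\Sigma)$: the single crossing has opposite sign in the two products, so the Kauffman relation gives
\begin{align*}
\gamma_i \gamma_j &= s\, R_1 + s^{-1} R_2, \\
\gamma_j \gamma_i &= s^{-1} R_1 + s\, R_2,
\end{align*}
where $R_1,R_2$ are the two resolutions. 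The coefficient matrix has determinant $s^2 - s^{-2}$, which is invertible in $R = \mathbb{Q}(s)$, so both $R_1$ and $R_2$ lie in $A$. A regular neighbourhood of $\gamma_i \cup \gamma_j$ is a once-holed torus on which $\gamma_i, \gamma_j$ play the role of meridian and longitude, and an inspection identifies $\{R_1, R_2\}$ with $\{T_{\gamma_i}(\gamma_j),\, T_{\gamma_i}^{-1}(\gamma_j)\}$. Since the $T_{\gamma_i}$ generate the mapping class group by hypothesis, $A$ is invariant under its action.

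Fixing some generator $\gamma_{i_0}$, which is non-separating, the classical fact that the mapping class group acts transitively on non-separating simple closed curves of a closed orientable surface implies that every such curve lies in $A$. For a separating simple closed curve $\gamma$, I would appeal to the standard fact that $\gamma$ can be expressed in the skein algebra as an $R$-polynomial in non-separating simple closed curves. A prototype identity occurs inside $\Sk(\Sigma_{1,1})$, where the peripheral loop $p$ is expressed in terms of the non-separating generators $y_1,y_2,y_3$ via $z y_3 = s^2 y_1^2 + s^{-2} y_2^2 - s^2 - s^{-2} + p$, noted after \cref{thm:torusskeinpres}. Embedding such an identity inside any subsurface of $\Sigma$ bounded by $\gamma$ writes $\gamma$ as a polynomial in non-separating simple closed curves, placing it in $A$. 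Finally, disjoint simple closed curves commute in the skein algebra and multiply to their disjoint union, so every multicurve is a product of simple closed curves; since multicurves span $\Sk(\Sigma)$ as an $R$-module, we conclude $A = \Sk(\Sigma)$.

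The main obstacle I anticipate is this last reduction from separating to non-separating curves: the mapping class group does not act transitively on separating simple closed curves, so one cannot simply transport a single example as in the non-separating case. Instead, one must produce for each separating curve a suitable configuration of non-separating curves in the subsurface it bounds whose skein relation realises the separating curve, and then embed that local calculation into $\Sigma$. The remaining steps are relatively formal once the one-crossing skein computation and the transitivity of the mapping class group on non-separating curves are in hand.
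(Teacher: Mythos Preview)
The paper does not give its own proof of this theorem; it is quoted from Santharoubane \cite{San18}. The only commentary the paper offers is that Santharoubane's argument uses mapping class group theory together with the skein identity recorded immediately afterward as \cref{lem:dtoskein}: for simple closed curves $\alpha,\beta$ meeting once,
\[
D_\alpha^\epsilon(\beta) = \epsilon\,\frac{s^\epsilon \alpha\beta - s^{-\epsilon}\beta\alpha}{s^2-s^{-2}}.
\]
Your first step is precisely a derivation of this identity (you invert the $2\times 2$ system coming from the skein relation in the two stacking orders), and your overall architecture --- mapping-class-group invariance of $A$, transitivity on non-separating curves, then reduction of the remaining curves --- is the standard route and matches what the paper attributes to \cite{San18}.

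The only soft spot, which you already flag, is the separating-curve step. Your $\Sigma_{1,1}$ identity expresses the \emph{boundary} of an embedded once-punctured torus in terms of non-separating curves, so it handles exactly those separating curves bounding a genus-$1$ piece. For a separating curve $\gamma$ bounding a subsurface of genus $h\ge 2$, an embedded $\Sigma_{1,1}$ inside that piece has boundary isotopic to some other curve, not to $\gamma$; so your sentence ``embedding such an identity inside any subsurface bounded by $\gamma$ writes $\gamma$ as \ldots'' does not go through as stated. One fixes this by induction on $h$ (use the analogous relation in $\Sk(\Sigma_{1,2})$ to trade $\gamma$ for a separating curve of type $h-1$ plus non-separating curves), or by invoking mapping-class-group transitivity on separating curves of each fixed topological type and exhibiting one representative of each type explicitly. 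For the paper's actual application to $\Sigma_{2,0}$ every separating curve has $h=1$, so your argument as written already covers that case.
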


Using this theorem we can prove:
\begin{cor}\label{cor:gen}
The skein algebra $\Sk(\Sigma_{2,0})$ is generated by each of the following:
\begin{enumerate}
    \item The set of curves $I := \{A_1, A_2, A_3, B_{12}, B_{23}\}$ of \cref{fig:cyclesall},
    \item The subalgebras $\Sk(\Sigma_{1,1}^L)$, $\Sk(\Sigma_{1,1}^R)$, and $\Sk(\Sigma_{0,4})$ of \cref{fig:embeddings}.
\end{enumerate}
\end{cor}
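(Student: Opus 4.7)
The plan is to apply Santharoubane's criterion to obtain (1), and then derive (2) as a direct formal consequence of (1).

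For part (1), I would verify the two hypotheses of Santharoubane's theorem for the set $I = \{A_1, A_2, A_3, B_{12}, B_{23}\}$. The first hypothesis---that each curve is non-separating and simple, and that every pair meets in at most one point---is read directly off \cref{fig:cyclesall}: the three ``$A$''-curves are mutually disjoint (they form a pants decomposition of $\Sigma_{2,0}$), and each ``$B$''-curve meets the two ``$A$''-curves it straddles in a single transverse point and is disjoint from the third; with the standard choice of representatives $B_{12}$ and $B_{23}$ can also be taken disjoint, so the intersection graph of $I$ is the chain $A_1$--$B_{12}$--$A_2$--$B_{23}$--$A_3$. The second hypothesis---that the associated Dehn twists generate $\mathrm{MCG}(\Sigma_{2,0})$---is then immediate from Humphries' classical theorem, which asserts that $\mathrm{MCG}(\Sigma_g)$ is generated by $2g+1$ Dehn twists along a chain of non-separating simple closed curves of exactly this intersection pattern. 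Santharoubane's theorem then yields (1).

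Part (2) follows formally from (1). I would verify from \cref{fig:embeddings} that each generator in $I$ lies in the image of at least one of the three subsurface embeddings: $A_1, A_2, B_{12}$ are contained in $\Sigma_{1,1}^L$ and $A_2, A_3, B_{23}$ in $\Sigma_{1,1}^R$. Hence the three subalgebras $\Sk(\Sigma_{1,1}^L), \Sk(\Sigma_{1,1}^R), \Sk(\Sigma_{0,4})$ jointly contain the generating set $I$ provided by (1), and therefore generate $\Sk(\Sigma_{2,0})$.

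The only substantive step is recognising $I$ as a Humphries-style chain for $\mathrm{MCG}(\Sigma_{2,0})$---i.e.\ producing representatives for which $B_{12}$ and $B_{23}$ are disjoint and then matching the resulting intersection graph to the standard Humphries picture. Both facts are visible from \cref{fig:cyclesall} given the theta-graph/pair-of-pants decomposition built into the paper's conventions, and all remaining verifications (non-separation, intersection counts, and the subsurface inclusions needed for (2)) are straightforward reads of the same figures.
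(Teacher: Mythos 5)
Your argument is essentially the paper's: for part (1) you apply Santharoubane's criterion to the chain $A_1$--$B_{12}$--$A_2$--$B_{23}$--$A_3$, with Humphries' theorem supplying the mapping-class-group hypothesis, and part (2) follows because the generating set $I$ is covered by the three subalgebras. This is exactly the route taken in the paper.

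One detail in your part (2) is incorrect, although it does not affect the conclusion: $A_2$ is \emph{not} contained in $\Sigma_{1,1}^L$ (nor in $\Sigma_{1,1}^R$). The two punctured tori of \cref{fig:embeddings} are disjoint subsurfaces, and $A_2$ has essential geometric intersection number $1$ with both $B_{12}\subset\Sigma_{1,1}^L$ and $B_{23}\subset\Sigma_{1,1}^R$, so it cannot be isotoped into either of them. It lies instead in the four-punctured sphere: under the embedding of \cref{fig:embeddings} it is the curve $x_1$ (compare the formula \eqref{eq:x1act} with the $A_2$-action in \cref{maction}). The correct covering is therefore $A_1,B_{12}\in\Sk(\Sigma_{1,1}^L)$, $A_3,B_{23}\in\Sk(\Sigma_{1,1}^R)$, and $A_2\in\Sk(\Sigma_{0,4})$, which is all that part (2) requires. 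A second, harmless, imprecision: Humphries' $2g+1$ generators form a chain of this intersection pattern only in genus $2$ (for $g\ge 3$, twists along a chain of $2g+1$ curves generate only the hyperelliptic mapping class group, and Humphries' set is not a chain); since you only invoke the genus $2$ case, this does not matter here.
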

\begin{proof}
In \cite{Hum79}, Humphries constructed a finite generating set of the mapping class group for closed surfaces, and in the genus 2 case it is exactly the set of curves in the first claim. The second claim follows since each of the curves in $I$ is contained in one of the subalgebras mentioned.
\end{proof}

Santharoubane proves his generation result by using some theory that has been developed for mapping class groups. These groups are related to skein theory using \cref{lem:dtoskein}, which is a standard lemma relating Dehn twists to the resolutions of the crossing that are given by the skein relation. 
If $\alpha$ is a simple closed curve, let $D_\alpha$ be the right-handed Dehn twist along $\alpha$. In particular, if $\beta$ is a simple closed curve intersecting $\alpha$ once, then $D_\alpha(\beta)$ is the simple closed curve which can be described in words as `follow $\beta$, turn right at the intersection and follow $\alpha$, then turn right at the intersection and continue following $\beta$.' 

\begin{lem}\label{lem:dtoskein}
Let $\alpha$ and $\beta$ be two simple closed curves in $\Sigma$ that intersect once. We have the following relation in the skein algebra of $\Sigma$:
\[
D_\alpha^\epsilon(\beta) = \epsilon \frac {s^\epsilon \alpha \beta - s^{-\epsilon} \beta \alpha}{(s^2-s^{-2})}
\]
where $\epsilon \in \{\pm 1\}$.
\end{lem}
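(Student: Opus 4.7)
The plan is to compute the product $\alpha\beta$ (and $\beta\alpha$) in the skein algebra directly by applying the Kauffman bracket relation at the unique crossing that arises from the stacking order, and then to identify each of the two smoothings topologically as a Dehn-twisted curve.

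Concretely, I would first set things up as follows. Push $\alpha$ into the level $\Sigma\times\{3/4\}$ and $\beta$ into $\Sigma\times\{1/4\}$, so that the product $\alpha\beta\in \Sk(\Sigma)$ is represented by a link with exactly one crossing, namely the preimage of the single transverse intersection point, with $\alpha$ passing over $\beta$. Applying the skein relation \eqref{skeinrel_cross} at that crossing writes
\[
\alpha\beta \;=\; s\, \gamma_+ \;+\; s^{-1}\, \gamma_-,
\]
where $\gamma_\pm$ are the two smoothings inside the local disc. Reversing the stacking order $\beta\alpha$ gives the opposite crossing, hence
\[
\beta\alpha \;=\; s^{-1}\, \gamma_+ \;+\; s\, \gamma_-.
\]

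The next step, which is the topological heart of the argument, is to identify $\gamma_+$ and $\gamma_-$ with the Dehn twist images $D_\alpha(\beta)$ and $D_\alpha^{-1}(\beta)$. Using the verbal description of $D_\alpha(\beta)$ given just before the lemma (``follow $\beta$, turn right at the intersection, follow $\alpha$, turn right and continue along $\beta$''), one of the two local smoothings reconnects the four arcs exactly in this way, while the other smoothing corresponds to the left-handed turn and thus represents $D_\alpha^{-1}(\beta)$. A small local picture at the crossing, together with the handedness convention for $D_\alpha$, pins down which is which; the answer is $\gamma_+ = D_\alpha(\beta)$ and $\gamma_- = D_\alpha^{-1}(\beta)$.

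Once this identification is made, the lemma follows by solving the two-by-two linear system. For the $\epsilon=+1$ case,
\[
s\,\alpha\beta - s^{-1}\,\beta\alpha \;=\; (s^2 - s^{-2})\,D_\alpha(\beta),
\]
which rearranges to $D_\alpha(\beta) = (s\alpha\beta - s^{-1}\beta\alpha)/(s^2-s^{-2})$; and for $\epsilon=-1$, the analogous combination $s^{-1}\alpha\beta - s\,\beta\alpha = -(s^2-s^{-2})\,D_\alpha^{-1}(\beta)$ gives the formula with its extra overall sign. The only real obstacle is the sign/handedness bookkeeping in identifying $\gamma_\pm$ with $D_\alpha^{\pm 1}(\beta)$; once the pictures are drawn carefully the algebraic manipulation is immediate.
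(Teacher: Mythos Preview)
Your argument is correct and is precisely the standard computation the paper has in mind. Note, however, that the paper does not actually give a proof of this lemma: it merely records it as ``a standard lemma relating Dehn twists to the resolutions of the crossing that are given by the skein relation,'' so there is no alternative argument to compare against. Your write-up supplies the missing details, and the only delicate point---matching the two smoothings $\gamma_\pm$ with $D_\alpha^{\pm 1}(\beta)$ via the handedness convention stated just before the lemma---is exactly where the care is needed.
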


We now give an explicit computation of the loop $B_{13}$ in terms of the other loops in \cref{fig:cycles}. (We note that a  computation for $B_{13}$ also appeared in \cite[Fig. 7]{Hik19}, which used an identity in the mapping class group that is similar to \eqref{eq:mcg}.) 
\begin{cor}\label{cor:dehn}
We have the following identity in the skein algebra of $\Sigma_{2,0}$:
\[
B_{13} = -\delta^{-4}[A_3,[B_{23},[A_1,[A_2,B_{12}]_{s^{-1}}]_s]_{s^{-1}}]_{s^{-1}}
\]
where $\delta = s^2-s^{-2}$.
\end{cor}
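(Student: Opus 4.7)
The plan is to convert the iterated $q$-commutator identity into a statement about Dehn twists on $\Sigma_{2,0}$, and then verify the resulting topological claim. Rewriting \cref{lem:dtoskein} in the form $[\alpha,\beta]_{s^\epsilon}=\epsilon\,\delta\,D_\alpha^\epsilon(\beta)$ and peeling the nested brackets from the inside out, each bracket contributes a factor of $\pm\delta$: the three occurrences of $s^{-1}$ give three minus signs and the four brackets give four factors of $\delta$, for an overall scalar of $-\delta^{4}$. This cancels the prefactor $-\delta^{-4}$ exactly, reducing the identity to the purely topological claim
\[
B_{13} \;=\; D_{A_3}^{-1}\circ D_{B_{23}}^{-1}\circ D_{A_1}\circ D_{A_2}^{-1}(B_{12})
\]
on isotopy classes of simple closed curves in $\Sigma_{2,0}$.

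There is a caveat in this reduction: \cref{lem:dtoskein} applies to two simple closed curves meeting exactly once, so before applying it at each step one must verify that the inner curve produced by the twists so far meets the next twisting curve once. Because Dehn twists send simple closed curves to simple closed curves and the $q$-bracket is bilinear, the scalar accumulated from earlier steps is harmless, and what remains is the four geometric intersection-number conditions $i(A_2,B_{12})=i(A_1,D_{A_2}^{-1}(B_{12}))=i(B_{23},D_{A_1}D_{A_2}^{-1}(B_{12}))=i(A_3,D_{B_{23}}^{-1}D_{A_1}D_{A_2}^{-1}(B_{12}))=1$. Each of these can be read off a picture or verified with the standard bound $i(D_\alpha^{\pm1}(\beta),\gamma)\le i(\alpha,\beta)\,i(\alpha,\gamma)+i(\beta,\gamma)$.

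The main obstacle is the topological verification of the Dehn-twist identity displayed above. I would carry it out by drawing $B_{12}$ on the standard model of $\Sigma_{2,0}$ (for instance, the one shown in \cref{fig:cyclesall}) and successively applying each twist using the description ``follow $\beta$, turn onto $\alpha$, loop once around, then continue along $\beta$'' for $D_\alpha^{\pm1}(\beta)$, checking at the end that the resulting curve is isotopic to $B_{13}$. While this is essentially a picture-tracking exercise, it is where all of the content of the corollary actually resides: the order and signs of the four Dehn twists are pinned down by exactly how a curve of $B_{12}$-type is pushed across the middle handle to become a curve of $B_{13}$-type, and any error in the ordering would either fail the intersection-one conditions or produce a different simple closed curve at the end.
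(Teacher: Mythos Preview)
Your proposal is correct and follows essentially the same route as the paper: the paper first asserts the mapping-class-group identity
\[
B_{13} = (D_{A_3}^{-1}\circ D_{B_{23}}^{-1}\circ D_{A_1}\circ D_{A_2}^{-1})(B_{12})
\]
and then invokes \cref{lem:dtoskein}, whereas you unwind the brackets first and then reduce to the same topological identity. Your explicit check that each successive pair of curves meets once (needed for \cref{lem:dtoskein}) is a detail the paper leaves implicit.
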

\begin{proof}
Elementary computations with Dehn twists show the following identity:
\begin{equation}\label{eq:mcg}
B_{13} = (D_{A_3}^{-1}\circ D_{B_{23}}^{-1}\circ D_{A_1}\circ D_{A_2}^{-1})(B_{12})
\end{equation}
Then the claim follows from \cref{lem:dtoskein}
\end{proof}

\subsection{Spherical Double Affine Hecke Algebras}
Double Affine Hecke Algebras (DAHAs) were introduced by \citet{Che95}, who used them to prove Macdonald's constant term conjecture for Macdonald polynomials. These algebras have since found wider ranging applications particularly in representation theory \cite{Che05}. DAHAs were associated to different root systems, and we recall the type $A_1$ case here.

\begin{defn}
The \emph{\(A_1\) double affine Hecke algebra} \(H_{q,t}\)  is the algebra generated by \(X^{\pm 1}\), \(Y^{\pm 1}\) and \(T\) subject to the relations
\[T X T = X^{-1}, \quad T Y^{-1} T = Y, \quad XY = q^2 T^{-2} Y X, \quad (T - t)(T + t^{-1}) = 0\]
\end{defn}

\begin{rmk}
Our presentation here varies slightly from the standard presentation in \cite{Che05}, and in particular replaces his $t$ with $t^{-1}$. See \cite[Rmk. 2.19]{Berest&Samuelson} for the precise relation.
\end{rmk}

The element \(e = (T + t^{-1})/(t + t^{-1})\) is an idempotent of \(H_{q,t}\), and is used to define the spherical subalgebra \(SH_{q, t} := e H_{q, t} e\).

We shall also consider Sahi's \cite{Sah99} DAHA associated to the $(C^\vee_1, C_1)$ root system (see also \cite{NS04} for the rank 1 case that we use in this paper).
The double affine Hecke algebra \(\mathscr{H}_{q,\underbar{t}}\) of type $(C^\vee_1, C_1)$ is a 5-parameter universal deformation of the affine Weyl group \(\mathbb{C}[X^{\pm}, Y^{\pm}] \rtimes \mathbb{Z}_2\) with the deformation parameters \(q \in \mathbb{C}^*\) and \(\underbar{t} = (t_1, t_2, t_3, t_4) \in (\mathbb{C}^*)^4\) and it is a generalisation of Cherednik's double affine Hecke algebras of rank 1, since there is an isomorphism \(H_{q;t} = \mathscr{H}_{q, 1,1,t, 1}\).

\begin{defn}
The \emph{$(C^\vee_1, C_1)$ double affine Hecke algebra} is the algebra with generators  \(T_0, T_1, T_0^{\vee}, T_1^{\vee}\) and the following relations:
\begin{align*}
     (T_0-t_1)(T_0 + t^{-1}_1) &= 0 \\
     (T_1-t_3)(T_1 + t^{-1}_3) &= 0 \\
     (T_0^{\vee}-t_2)(T_0^{\vee} + t^{-1}_2) &= 0 \\
     (T_1^{\vee}-t_4)(T_1^{\vee} + t^{-1}_4) &= 0 \\
     T_1^{\vee} T_1 T_0 T_0^{\vee} &=q
\end{align*}
\end{defn}
The spherical subalgebra\footnote{The spherical subalgebra is not a unital subalgebra; instead,  the unit in the spherical subalgebra is the idempotent $e$.} $ S\mathscr{H}_{q,\underbar{t}} $ is defined in terms of an idempotent $e \in \mathscr{H}_{q,\underline t}$ as follows:
\begin{equation}\label{eq:edef}
e := (T_1 + t^{-1}_3)/(t_3 + t_3^{-1}),\quad \quad S\mathscr{H}_{q, \underbar{t}} := e \mathscr{H}_{q, \underbar{t}} e
\end{equation}

Terwilliger gave presentations of the spherical DAHAs which will be useful for us (for the conversion between our notation and Terwilliger's notation, see \cite{Sam19} and \cite{BS16}). Define the following elements in $S\mathscr{H}_{q,\underbar t}$:
\[x := (q^{-1}T_0 T_0^\vee + q (T_0 T_0^\vee)^{-1})e, \qquad
    y := (T_1 T_0 + T_0^{-1}T_1^{-1})e, \qquad
    z := (T_1 T_0^{\vee} + (T_1 T_0^{\vee})^{-1})e\]
In what follows it will be helpful to use the following notation:
\[
\ti := t_1 - t_1^{-1}, \quad \tiii = qt_3 - q^{-1}t_3^{-1},\quad \mathrm{etc.}
\]
\begin{thm}[\cite{terwilliger2013}] \label{thm:sDAHA pres}
The spherical double affine Hecke algebra \(S\mathscr{H}_{q, \underbar{t}}\) has a presentation with generators \(x,y,z\) defined above and relations
\begin{align*}
    [x,y]_q &= (q^2 - q^{-2})z - (q-q^{-1})\gamma \\
    [y,z]_q &= (q^2 - q^{-2})x - (q-q^{-1})\alpha \\
    [z,x]_q &= (q^2 - q^{-2})y - (q-q^{-1})\beta \\
    \Omega &= (\ti)^2 + (\tii)^2 + (\tiii)^2 + (\tiv)^2 - \ti \,\tii\, (\tiii)\, \tiv + (q + q^{-1})^2
\end{align*}
where \(\alpha := \ti\, \tii + \tiii \, \tiv\), \(\beta := \ti\, \tiv + \tiii\, \tii\), \(\gamma := \tii\, \tiv + \tiii\, \ti\), and the `Casimir' \[\Omega := -qxyz + q^2 x^2 + q^{-2} y^2 + q^2 z^2 - q \alpha x - q^{-1} \beta y - q \gamma z.\]
\end{thm}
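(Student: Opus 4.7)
The plan is to prove the presentation in three stages: verify the relations hold in $S\mathscr{H}_{q,\underline{t}}$, show that $x,y,z$ generate the spherical subalgebra, and show that no further relations are needed. The setup that makes computations tractable is the identity $T_1 e = t_3 e = e T_1$, which follows from $(T_1 - t_3)(T_1 + t_3^{-1}) = 0$ and the definition of $e$. Introducing $Y := T_1 T_0$, $W := T_1 T_0^\vee$, and $V := T_0 T_0^\vee$, the generators $y,z,x$ become (up to scalar normalizations) the Laurent symmetrizations $(Y + Y^{-1})e$, $(W + W^{-1})e$, and $q^{-1}(V + q^2 V^{-1})e$, while the fifth defining relation $T_1^\vee T_1 T_0 T_0^\vee = q$ provides a multiplicative linkage among $Y, W, V$. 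Expanding $xy$ and $yx$ using the four quadratic relations and this constraint, one checks that $[x,y]_q$ reduces to a linear combination of $z$ and a constant in the $\ti, \tii, \tiii, \tiv$, matching the stated relation. The other two $q$-commutators are symmetric computations, and the Casimir identity follows from an analogous but lengthier expansion of $xyz$, where the key miracle is that all terms containing $T_0, T_0^\vee$ cancel after applying the linkage.

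For generation, I would use a PBW-type basis of $\mathscr{H}_{q,\underline{t}}$ and show inductively that every element $ewe$ of the spherical subalgebra is a polynomial in $x, y, z$, using the $q$-commutation relations (once established) to reorder words. For completeness of the relations, let $A_q$ denote the abstract algebra generated by $x, y, z$ subject to the stated relations; the preceding steps give a surjection $\pi: A_q \to S\mathscr{H}_{q,\underline{t}}$. The three $q$-commutation relations allow one to rewrite any monomial in $x, y, z$ as a linear combination of ordered monomials $x^a y^b z^c$ modulo lower-degree terms, and the Casimir relation expresses $xyz$ in terms of quadratic and linear terms, so it can be used to reduce the $z$-degree in each ordered monomial to at most one. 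This yields a spanning set for $A_q$ whose size can be matched against a known basis of $S\mathscr{H}_{q,\underline{t}}$ coming from its polynomial representation, forcing $\pi$ to be an isomorphism.

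The main obstacle is the third step. The rewriting procedure requires a confluence check in the style of Bergman's diamond lemma, and the cubic nature of the Casimir relation makes this substantially more delicate than for the quadratic $q$-commutators alone, since reductions of words like $xyzx$ must be shown to yield the same normal form regardless of the order in which relations are applied. An attractive alternative route would be to let $A_q$ act on the polynomial representation of $S\mathscr{H}_{q,\underline{t}}$ via the explicit Dunkl-type formulas for $x, y, z$, and then argue faithfulness by a direct analysis of leading behavior in $X^n$; the subtlety there is ruling out nontrivial relations among the $x^a y^b z^c$ with $c \leq 1$ by exploiting genericity of the parameters $q$ and $t_1, \ldots, t_4$.
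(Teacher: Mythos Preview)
The paper does not prove this theorem at all: it is stated with the citation \cite{terwilliger2013} and no proof is given, so there is no ``paper's own proof'' to compare against. The result is simply imported from Terwilliger's work and used as a black box to set up the maps from skein algebras to spherical DAHAs.

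Your sketch is a reasonable outline of how such a presentation theorem is typically established, and the three-stage architecture (verify relations, prove generation, prove completeness) is the standard one. The identification $T_1 e = t_3 e$ and the introduction of $Y, W, V$ are exactly the right moves for making the computations in stage one manageable. You are also honest that stage three is where the real work lies, and your two proposed routes (diamond-lemma confluence versus faithfulness of the polynomial representation) are both viable in principle. That said, neither route is carried out, and the confluence check for a cubic relation interacting with three quadratic ones is genuinely nontrivial; Terwilliger's original argument proceeds somewhat differently, exploiting the structure theory of the universal Askey--Wilson algebra $\Delta_q$ rather than a direct PBW/diamond analysis. If you wanted a self-contained proof, the faithfulness route via the polynomial representation is probably cleaner, but you would need to actually execute the leading-term analysis you allude to, not just assert that genericity of parameters makes it work.
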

Using Terwilliger's presentation of the spherical DAHA and the Bullock-Przytycki presentation of the skein algebra of the 4-punctured sphere (\cref{thm:presentationofsphereskein}), we obtain the following result.
\begin{prop}
\label{prop:Bereset&SamuelsonIso}
There is an algebra map \(\varphi: \Sk(\Sigma_{0,4}) \to S\mathscr{H}_{q,\underline{t}}\) given by
\begin{alignat*}{5}
 \varphi(x_1) &= x \qquad&\varphi(x_2) &= y \qquad &\varphi(x_3) &= z \qquad&\varphi(s) &= q^2,\\
 \varphi(a_1) &= i \ti &\varphi(a_2) &= i \tii &\varphi(a_3) &= i \tiv &\varphi(a_4) &= i (\tiii).
\end{alignat*}
where $i^2=-1$. Under this map,
\[
p_1 \mapsto - \alpha,\quad \quad p_2 \mapsto - \beta, \quad \quad p_3 \mapsto - \gamma
\]
\end{prop}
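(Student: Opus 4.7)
The plan is to combine the Bullock--Przytycki presentation of $\Sk(\Sigma_{0,4})$ from \cref{thm:presentationofsphereskein} with Terwilliger's presentation of the spherical DAHA from \cref{thm:sDAHA pres}. Since $\Sk(\Sigma_{0,4})$ is presented as an $R[a_1,a_2,a_3,a_4]$-algebra on generators $x_1,x_2,x_3$ subject to three cyclic $q$-commutator relations and one ``Casimir'' relation, defining $\varphi$ on generators as stated, it suffices to verify that the images $x,y,z \in S\mathscr H_{q,\underline t}$, together with the scalar substitutions for $a_i$ and $s$, satisfy each of these four relations.

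I would first compute $\varphi(p_i)$. Each $p_i$ is a sum of two products $a_j a_k$, and since $\varphi(a_j)\varphi(a_k) = (i\overline t_j)(i\overline t_k) = -\overline t_j \overline t_k$, direct substitution yields $\varphi(p_i) \in \{-\alpha,-\beta,-\gamma\}$ according to the pairing of indices. Next I would check each cyclic commutator relation: once the skein parameter $s$ is identified with the appropriate power of $q$ so that $[\,\cdot\,,\,\cdot\,]_{s^2}$ matches $[\,\cdot\,,\,\cdot\,]_q$, the coefficient $(s^4-s^{-4})$ becomes $(q^2-q^{-2})$ and $(s^2-s^{-2})$ becomes $(q-q^{-1})$, so using the previous computation the skein relation $[x_i,x_{i+1}]_{s^2} = (s^4-s^{-4})x_{i+2} + (s^2-s^{-2})p_{i+2}$ maps directly onto the DAHA relation $[x,y]_q = (q^2-q^{-2})z - (q-q^{-1})\gamma$ (and its two cyclic analogs, after matching $p_i$ with the appropriate element of $\{-\alpha,-\beta,-\gamma\}$).

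The main content lies in the Casimir relation. Applying $\varphi$ to $\Omega_K$, the cubic monomial $-s^2 x_1 x_2 x_3$ and the quadratic terms in the $x_i$ map to $-qxyz$ and the corresponding quadratic terms in $\Omega$, while each $p_i x_i$ becomes $-\alpha x$, $-\beta y$, or $-\gamma z$, with signs that exactly compensate those in $\Omega$ from \cref{thm:sDAHA pres}. The scalar identity $\Omega_K = (s^2+s^{-2})^2 - (\ldots)$ then reduces, after substituting $\varphi(a_j) = i\overline t_j$, to a polynomial identity between symmetric Laurent expressions in $t_1,\ldots,t_4$. This polynomial identity is the main obstacle: it is a finite but arithmetically involved check, amounting to the assertion that the two presentations describe the same underlying Casimir element. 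Once it is verified, the rest of the proof is direct substitution, and the ``under this map'' statement about $p_i \mapsto -\alpha,-\beta,-\gamma$ follows from Step 1 above.
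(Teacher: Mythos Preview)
Your approach is correct and is essentially the same as the paper's: the paper does not give a detailed proof but simply states that the proposition follows by comparing the Bullock--Przytycki presentation of $\Sk(\Sigma_{0,4})$ with Terwilliger's presentation of $S\mathscr{H}_{q,\underline t}$, and cites unpublished notes of Terwilliger and \cite{Berest&Samuelson,Hik19} for prior appearances. Your outline of checking the three cyclic $q$-commutator relations via the computation of $\varphi(p_i)$, and then verifying the Casimir relation as a scalar identity in the $\overline t_i$, is exactly the content of that comparison.
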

\begin{rmk}
To the best of our knowledge, \cref{prop:Bereset&SamuelsonIso} first appeared in notes of Terwilliger which have not been published. It was also stated in \cite{Berest&Samuelson}, but the notational conventions there are slightly different. In particular, the images of $a_3$ and $a_4$ are different here and in \cite{Berest&Samuelson}, but because of the underlying differences in notation, both statements are correct. This statement also was proved directly in \cite[Thm. 3.8]{Hik19}, without relying on Terwilliger's presentation of the spherical DAHA.
\end{rmk}

Using \cite[Rmk. 2.19]{Berest&Samuelson}, it isn't too hard to see that (in our current conventions) the $A_1$ DAHA is the specialisation of the $(C^\vee_1, C_1)$ DAHA at $\underbar t = (1,1,t,1)$. This means we can specialise the presentation of the spherical DAHA, and the generators specialise as follows:
\[
x = (X+X^{-1})e,\quad \quad y = (Y+Y^{-1})e,\quad \quad z = (qYX + q^{-1}X^{-1}Y^{-1})e
\]
These generators lead to the following presentation.
\begin{thm}[\cite{terwilliger2013}]
The spherical double affine Hecke algebra \(SH_{q, t}\) has a presentation with generators \(x,y,z\) and relations
\[[x, y]_q = (q^2 - q^{-2}) z, \quad  [z, x]_q = (q^2 - q^{-2})y, \quad [y, z]_q = (q^2 - q^{-2})x\]
\[q^2 x^2 + q^{-2} y^2 + q^2 z^2 - qxyz = \left( \frac{t}{q} - \frac{q}{t} \right)^2 + \left(q + \frac{1}{q} \right)^2\]
\end{thm}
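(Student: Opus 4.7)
The plan is to derive this presentation directly as a specialization of Terwilliger's $(C_1^\vee, C_1)$ presentation (Theorem \ref{thm:sDAHA pres}), using the identification $H_{q,t} \cong \mathscr{H}_{q,1,1,t,1}$ recalled just above the statement. First I would check that passing to the spherical subalgebra commutes with the parameter specialization $(t_1, t_2, t_3, t_4) \mapsto (1, 1, t, 1)$; this is routine because the idempotent $e$ only involves $t_3$, and so setting $t_1 = t_2 = t_4 = 1$ in the presentation of $\mathscr H_{q,\underline t}$ yields $\mathscr H_{q,1,1,t,1}$, and multiplying on both sides by $e$ yields the corresponding spherical subalgebra. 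Next I would verify that under this identification the generators $x$, $y$, $z$ from Theorem \ref{thm:sDAHA pres} specialize to the explicit formulas $x = (X+X^{-1})e$, $y = (Y+Y^{-1})e$, $z = (qYX + q^{-1}X^{-1}Y^{-1})e$ listed just before the theorem statement. This is a direct calculation from the definitions of $T_0$, $T_0^\vee$, $T_1$ once one fixes the dictionary between Cherednik's and Sahi's generators.

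Once the specialization is in place, the relations simplify mechanically. At $(t_1,t_2,t_3,t_4) = (1,1,t,1)$ the scalars $\ti$, $\tii$, and $\tiv$ all vanish, while $\tiii$ reduces to a single parameter depending only on $t$ and $q$. Consequently $\alpha = \ti\tii + \tiii\tiv = 0$ and similarly $\beta = \gamma = 0$, so the three $q$-bracket relations in Theorem \ref{thm:sDAHA pres} immediately reduce to
\[
[x,y]_q = (q^2-q^{-2})z,\quad [y,z]_q = (q^2-q^{-2})x,\quad [z,x]_q = (q^2-q^{-2})y.
\]
The linear correction terms $q\alpha x$, $q^{-1}\beta y$, $q\gamma z$ drop out of the Casimir, so $\Omega$ reduces to $(\tiii)^2 + (q+q^{-1})^2$, and a short algebraic manipulation (matching conventions so that $(\tiii)^2 = (t/q - q/t)^2$, with $t$ possibly replaced by $t^{-1}$ according to the convention used in the identification $H_{q,t} \cong \mathscr{H}_{q,1,1,t,1}$) gives the stated quartic relation.

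The one substantive issue is checking that specializing a presentation really does produce a presentation of the specialized algebra; this is standard and holds here because the specialization corresponds to the quotient of the parameter ring by the ideal $(t_1 - 1, t_2 - 1, t_4 - 1)$, so both generators and relations descend cleanly. The main practical obstacle, therefore, is bookkeeping around the parameter conventions: nailing down exactly how Cherednik's $X$, $Y$, $T$ correspond to Sahi's $T_0, T_1, T_0^\vee, T_1^\vee$ and how Terwilliger's $t$ relates to the $t_3$ in the $(C_1^\vee, C_1)$ DAHA, so that $(\tiii)^2$ really yields $(t/q - q/t)^2$ with no stray factor of $q$ or inversion of $t$. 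Once those conventions are fixed, the theorem is an immediate corollary of Theorem \ref{thm:sDAHA pres}.
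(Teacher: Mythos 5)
Your proposal matches the paper exactly: the paper gives no independent proof of this statement (it is cited to Terwilliger), and the text immediately preceding it justifies it by the very route you describe, namely specialising the $(C^\vee_1,C_1)$ presentation of \cref{thm:sDAHA pres} at $\underline{t}=(1,1,t,1)$, where $\ti=\tii=\tiv=0$ kills $\alpha,\beta,\gamma$ and reduces the Casimir to $(\tiii)^2+(q+q^{-1})^2$. The one point you flag but leave open --- whether $(\tiii)^2$ comes out as $(t/q-q/t)^2$ or its image under $t\mapsto t^{-1}$ --- is indeed the only delicate bookkeeping, and it is exactly the convention issue the paper itself acknowledges (its remark that its $t$ is Cherednik's $t^{-1}$, via \cite[Rmk.~2.19]{Berest&Samuelson}), so your outline is essentially the paper's own argument.
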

Note that in the following statement we identify $s = q$ instead of $s = q^2$.
\begin{prop}[\cite{Sam19}]\label{prop:sktodahaA}
There is an algebra map \({\Sk}(\Sigma_{1,1}) \to SH_{q, t}\) uniquely determined by the following assignments:
\[ y_1 \mapsto x, \quad y_2 \mapsto y, \quad y_3 \mapsto z,\quad p \mapsto -q^2 t^{-2} - q^{-2} t^2, \quad s \mapsto q
\]
\end{prop}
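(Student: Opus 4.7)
The proof will be a straightforward comparison of defining relations, followed by a computational check that the puncture element $p$ lands on the claimed scalar. First I would invoke Terwilliger's presentation of $SH_{q,t}$ stated just above, which has exactly the three $q$-commutator relations $[x,y]_q = (q^2-q^{-2})z$, $[y,z]_q = (q^2-q^{-2})x$, $[z,x]_q = (q^2-q^{-2})y$, together with the quartic Casimir identity. Compare this with the Bullock--Przytycki presentation of $\Sk(\Sigma_{1,1})$ in \cref{thm:torusskeinpres}, which has generators $y_1,y_2,y_3$ subject to the three relations $[y_i,y_{i+1}]_s = (s^2-s^{-2})y_{i+2}$. The substitution $s \mapsto q$, $y_i \mapsto (x,y,z)$ takes each skein relation to the corresponding DAHA commutator relation on the nose, so by the universal property of the presentation the assignment extends uniquely to an algebra homomorphism $\varphi\colon \Sk(\Sigma_{1,1}) \to SH_{q,t}$.

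It remains to verify that the image of the central puncture loop $p$ is the scalar $-q^2 t^{-2} - q^{-2} t^2$. The puncture loop is a specific element of the skein algebra, not an extra generator, so this is a calculation rather than an additional relation to check. Using the identity in the text $zy_3 = s^2 y_1^2 + s^{-2} y_2^2 - s^2 - s^{-2} + p$, where the auxiliary element $z$ (the other resolution of the $y_1 y_2$ crossing) equals $s y_1 y_2 - s^2 y_3$ by the discussion of $y_1 y_2 = s y_3 + s^{-1} z$, we obtain
\[
p \;=\; z y_3 - s^2 y_1^2 - s^{-2} y_2^2 + s^2 + s^{-2} \;=\; s y_1 y_2 y_3 - s^2 y_3^2 - s^2 y_1^2 - s^{-2} y_2^2 + s^2 + s^{-2}.
\]
Applying $\varphi$ with $s \mapsto q$ and rearranging,
\[
\varphi(p) \;=\; -\bigl(-qxyz + q^2 x^2 + q^{-2} y^2 + q^2 z^2\bigr) + q^2 + q^{-2}.
\]

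Now the key step is to substitute the Casimir identity from Terwilliger's presentation, namely $q^2 x^2 + q^{-2} y^2 + q^2 z^2 - qxyz = (t/q - q/t)^2 + (q+q^{-1})^2$. Plugging this in and expanding the squares,
\[
\varphi(p) \;=\; -(t/q - q/t)^2 - (q+q^{-1})^2 + q^2 + q^{-2} \;=\; -\bigl(q^{-2}t^2 - 2 + q^2 t^{-2}\bigr) - \bigl(q^2 + 2 + q^{-2}\bigr) + q^2 + q^{-2},
\]
and the $q^{\pm 2}$ and constant terms cancel leaving $\varphi(p) = -q^{-2} t^2 - q^2 t^{-2}$, as claimed.

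The main obstacle here is really bookkeeping: matching Terwilliger's conventions with the Bullock--Przytycki conventions (in particular, the identification $s=q$ rather than $s = q^2$ that is used in the neighbouring \cref{prop:Bereset&SamuelsonIso}), and being careful that the auxiliary loop $z$ from the skein side is correctly translated into the DAHA generators before invoking the Casimir relation. Once these conventions are pinned down, both checks reduce to one-line manipulations, so no genuine obstruction appears.
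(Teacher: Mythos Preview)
Your argument is correct. The paper does not give its own proof of this proposition; it simply cites \cite{Sam19}. What you have written is exactly the natural verification: the Bullock--Przytycki relations for $\Sk(\Sigma_{1,1})$ match Terwilliger's three $q$-commutator relations for $SH_{q,t}$ under $s\mapsto q$, $y_i\mapsto x,y,z$, giving the algebra map by the universal property; and the image of the puncture loop $p$ is then forced and computed correctly via the Casimir relation. Your bookkeeping (the expression $z = s y_1 y_2 - s^2 y_3$, the rewriting of $p$, and the final cancellation) is accurate, and your caveat about $s=q$ versus $s=q^2$ is exactly the point the paper flags just before the statement.
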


\begin{rmk}\label{rmk:universal}
Terwilliger \cite{terwilliger2013} has introduced a \emph{universal Askey-Wilson algebra} $\Delta_q$, which maps to the spherical $(C^\vee_1, C_1)$ DAHA. 
The algebra $\Delta_q$ is generated by elements $A,B,C$, and the relations state that $A + [B,C]_q/(q^2-q^{-2})$ (along with its two cyclic permutations $A \mapsto B \mapsto C$) are central. He showed that this algebra maps to the spherical $(C^\vee_1, C_1)$ DAHA (where $A,B,C$ map to $x,y,z$), and it follows that it also maps to the spherical $A_1$ DAHA. Using the presentations of the skein algebras of $\Sigma_{1,1}$ and $\Sigma_{0,4}$, it is clear that these maps factor through the maps from the skein algebras in \cref{prop:Bereset&SamuelsonIso} and \cref{prop:sktodahaA}.
\end{rmk}

\subsection{Polynomial representations of DAHAs}\label{sec:polyrep}
The skein algebras of $\Sigma_{0,4}$ and $\Sigma_{1,1}$ act on the skein of the genus 2 handlebody using the maps of surfaces in  \cref{fig:embeddings}. We would like to use Propositions \ref{prop:Bereset&SamuelsonIso} and \ref{prop:sktodahaA} to identify these modules in terms of representations of DAHAs. In this section we recall the polynomial representation of $\mathscr{H}_{q,\underbar t}$ on $R[X^{\pm 1}]$ from \cite{NS04} and compute some structure constants of this action. 

First we define two auxiliary operators $\hs, \hy \in \mathrm{End}_R(R[X^{\pm 1}])$:
\begin{equation}
    \hs(f(X)) := f(X^{-1}),\quad \quad \hy(f(X)) := f(q^{-2}X)
\end{equation}
We write the actions of the operators $T_0$ and $T_1$ in terms of $\hs$, $\hy$, and the multiplication operator $X$:
\begin{align}
    T_0 &:= t_1 \hs \hy - \frac{q^2 \ti X^2 + q \tii X}{1-q^2 X^2} (1 - \hs \hy)\label{eq:t0op}\\
    T_1 &:= t_3 \hs + \frac{\overline{t}_3 + \tiv X}{1-X^2} (1-\hs)\notag
\end{align}
(Here we have used the notation $\bar{t}_i = t_i - t_i^{-1}$). 
We note that \emph{a priori} these operators act on rational functions $R(X)$, but in fact they preserve the subspace $R[X^{\pm 1}]$ of Laurent polynomials, since $(1-\hs \hy)f(X)$ is always divisible by $1-q^2 X^2$, and similar for $T_1$. Since $T_0$, $T_1$, and $X$ generate the DAHA, these definitions completely determine the action, and it is shown in \cite{NS04} that these operators satisfy the relations of the DAHA~$\mathscr{H}_{q,  \underline t}$.

The spherical subalgebra $S\mathscr{H}_{q,\{t_i\}}$ acts on $eR[X^{\pm 1}]$ and it isn't hard to see that the idempotent $e$ projects onto the subspace $R[X+X^{-1}] = R[x]$ of symmetric Laurent polynomials.

\begin{lem}\label{lem:bc1dahaact}
We have the following identities:
\begin{align}
    y\cdot 1 &= t_1 t_3 + t_1^{-1} t_3^{-1}\\
    yx\cdot 1 &= (q^2 t_1^{-1} t_3^{-1} + q^{-2} t_1 t_3) x + \left[ -\ti \tiv + \tiv (q^{-2} t_1 -q^2 t_1^{-1}) + (q^{-1}-q)\tii (t_3 + t_3^{-1}) \right]
\end{align}
\end{lem}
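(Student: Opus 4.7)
The plan is to compute both identities by direct application of the explicit formulas for $T_0$ and $T_1$ given in equation \eqref{eq:t0op}, together with the Hecke relations $T_i^{-1} = T_i - \bar{t}_i$ (which rewrite the inverses without introducing fractions), and the standard fact that the generator $x \in S\mathscr{H}_{q,\underline t}$ acts as multiplication by $X + X^{-1}$ on the symmetric subspace $eR[X^{\pm 1}] = R[X+X^{-1}]$.

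For the first identity, I would begin by observing that $\hat\sigma \cdot 1 = 1$ and $\hat y \cdot 1 = 1$, so the fractional pieces of $T_0$ and $T_1$ annihilate the constant polynomial, leaving $T_0 \cdot 1 = t_1$ and $T_1 \cdot 1 = t_3$. Plugging $T_1 \cdot 1 = t_3$ into $e = (T_1 + t_3^{-1})/(t_3 + t_3^{-1})$ gives $e \cdot 1 = 1$. The Hecke relations then yield $T_0^{-1} \cdot 1 = t_1 - \bar{t}_1 = t_1^{-1}$ and $T_1^{-1} \cdot 1 = t_3^{-1}$. Applying $(T_1 T_0 + T_0^{-1}T_1^{-1})e$ to $1$ gives the asserted value $t_1 t_3 + t_1^{-1}t_3^{-1}$.

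For the second identity, since $X + X^{-1}$ is symmetric, $e(X+X^{-1}) = X+X^{-1}$, so $yx \cdot 1 = (T_1 T_0 + T_0^{-1}T_1^{-1})(X+X^{-1})$. The main computational step is $T_0 \cdot X^{\pm 1}$: here $(1 - \hat\sigma\hat y)X^{\pm 1}$ is divisible by $1 - q^2 X^2$, so after cancellation one obtains the clean Laurent-polynomial expressions
\[
T_0 \cdot X = q^{-2}t_1 X^{-1} + \ti X + q^{-1}\tii, \qquad T_0 \cdot X^{-1} = q^2 t_1^{-1} X - q\tii.
\]
A key shortcut is that any symmetric polynomial is an eigenvector of $T_1$ with eigenvalue $t_3$ (so of $T_1^{-1}$ with eigenvalue $t_3^{-1}$), which gives $T_1^{-1}(X+X^{-1}) = t_3^{-1}(X+X^{-1})$ without further work and reduces $T_0^{-1}T_1^{-1}(X+X^{-1})$ to $t_3^{-1}T_0^{-1}(X+X^{-1}) = t_3^{-1}\bigl(T_0 - \bar t_1\bigr)(X+X^{-1})$. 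For $T_1 T_0(X+X^{-1})$, I would apply $T_1$ to the Laurent polynomial $T_0(X+X^{-1})$ computed above, again using that $(1-\hat\sigma)f$ is divisible by $1 - X^2$ on each monomial.

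Finally, I would add the two contributions and collect coefficients of $X$, $1$, and $X^{-1}$. The coefficients of $X$ and $X^{-1}$ must come out equal (a useful sanity check, since $yx\cdot 1$ lies in $R[X+X^{-1}]$), both equal to $q^{-2}t_1 t_3 + q^2 t_1^{-1}t_3^{-1}$, producing the coefficient of $x$ in the claim. The constant term collects into $\tiv\bigl(q^{-2}t_1 - \ti - q^2 t_1^{-1}\bigr) + (q^{-1}-q)\tii(t_3 + t_3^{-1})$, and rewriting $q^{-2}t_1 - \ti - q^2 t_1^{-1} = -\ti + (q^{-2}t_1 - q^2 t_1^{-1})$ matches the bracketed expression. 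The only real obstacle is bookkeeping: keeping track of signs and the distinction between $\ti, \tii, \tiv$ (which equal $\bar t_i$) and $\tiii = qt_3 - q^{-1}t_3^{-1}$ (which does not). No deeper input is needed beyond the explicit polynomial representation.
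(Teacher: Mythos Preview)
Your proposal is correct and follows essentially the same approach as the paper: both compute $T_0\cdot X^{\pm 1}$ explicitly from \eqref{eq:t0op} and then reduce $y\cdot(X+X^{-1})$ to these values using the Hecke relations. The only organisational difference is that the paper expands $(T_1T_0)^{-1}$ algebraically as $T_0T_1 - \ti T_1 - \overline{t}_3 T_0 + \ti\,\overline{t}_3$ before acting, whereas you handle the $T_0^{-1}T_1^{-1}$ piece directly via the observation that $T_1$ acts as $t_3$ on symmetric polynomials; this is a mild shortcut but not a genuinely different route.
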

\begin{proof}
%%Note: The identities above are on page 19 of Peter's handwritten notes.
The first equation is immediate, and the second is straightforward but somewhat tedious. Using the quadratic relations for $T_1$ and $T_0$, we need to compute
\begin{equation}\label{eq:yx}
(T_1T_0 + (T_1 T_0)^{-1})\cdot (X+X^{-1}) = (T_1 T_0 + T_0 T_1 - \ti T_1 - \overline{t}_3 T_0 + \ti \overline{t}_3)\cdot (X+X^{-1})
\end{equation}
Using equations \eqref{eq:t0op}, we can compute
\begin{alignat*}{2}
    &T_0\cdot X &&= t_1 q^{-2} X^{-1}  + \ti X + \tii q^{-1}\\
    &T_0\cdot X^{-1} &&= q^2 t_1^{-1} X - q \tii\\
    &T_1 \cdot X &&= t_3^{-1} X^{-1} - \tiv\\
    &T_1 \cdot X^{-1} &&= t_3 X + \overline{t}_3 X^{-1} + \tiv
\end{alignat*}
The claimed identity is the result of substituting these four equations into \eqref{eq:yx} and simplifying.
\end{proof}

Since the $A_1$ DAHA is the $\underline t = (1,1,t,1)$ specialisation of the $(C^\vee_1, C_1)$ DAHA, we have the following corollary to \cref{lem:bc1dahaact}:

\begin{cor}\label{cor:a1act}
The $A_1$ spherical DAHA acts on $R[x]$, and under this action we have
\begin{align}
    y\cdot 1 &= t+t^{-1}\label{cor:a1dprep}\\
    yx\cdot 1 &= (q^2 t^{-1} + q^{-2}t)x \notag
\end{align}
\end{cor}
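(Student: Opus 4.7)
The plan is to deduce this corollary immediately from \cref{lem:bc1dahaact} via the specialisation $\underline t = (1,1,t,1)$, which identifies the $A_1$ DAHA with a specialisation of the $(C^\vee_1,C_1)$ DAHA (as noted in the paragraph preceding the polynomial representation discussion). Under this specialisation, I would first record that the bar quantities collapse as
\[
\ti = 0,\quad \tii = 0,\quad \tiv = 0,\quad \overline{t}_3 = t-t^{-1},\quad \tiii = qt - q^{-1}t^{-1},
\]
since $t_1 = t_2 = t_4 = 1$ and $t_3 = t$.

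Next, I would substitute into the two formulas from \cref{lem:bc1dahaact}. For the first identity, the formula $y\cdot 1 = t_1 t_3 + t_1^{-1}t_3^{-1}$ becomes $t+t^{-1}$ immediately. For the second identity, the coefficient of $x$ becomes $q^2 t^{-1} + q^{-2} t$, while each of the three terms in the constant bracket
\[
-\ti\,\tiv + \tiv(q^{-2}t_1 - q^2 t_1^{-1}) + (q^{-1}-q)\tii(t_3+t_3^{-1})
\]
vanishes because each contains at least one factor of $\ti$, $\tii$, or $\tiv$. This gives $yx\cdot 1 = (q^2 t^{-1} + q^{-2}t)x$ as claimed.

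Since the polynomial representation of the $A_1$ DAHA is just the restriction of the polynomial representation of $\mathscr{H}_{q,\underline t}$ under this parameter specialisation (and the idempotent $e$ still projects onto the symmetric Laurent polynomials $R[x]$), no further verification is needed. There is essentially no obstacle here; the only thing to be careful about is bookkeeping — making sure that the bar-conventions for $\tiii$ (which has the extra $q$ factors) match what appears inside \cref{lem:bc1dahaact}, and that the vanishing of $\ti, \tii, \tiv$ eliminates all inhomogeneous contributions so that the action on $1$ and on $x$ takes the clean form stated.
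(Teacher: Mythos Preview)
Your proposal is correct and is exactly the paper's approach: the paper states the result as an immediate corollary of \cref{lem:bc1dahaact} under the specialisation $\underline t = (1,1,t,1)$, and you have simply written out the resulting substitutions explicitly. The only content is the observation that $\ti=\tii=\tiv=0$ kills the constant term in the formula for $yx\cdot 1$, which you handle correctly.
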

% !TEX root=./main.tex
\section{Loop Actions}
\label{sec:calculations}

In this section we shall determine the actions of the loops depicted in 
\cref{fig:cycles} and \cref{fig:cycles2} on \(\Sk(\mathcal{H}_2)\), the skein module of the solid \(2\)-handlebody. 
These loop actions define the actions of the skein algebras $\Sk(\Sigma_{0,4})$ and $\Sk(\Sigma_{1,1})$ on $\Sk(\mathcal{H}_2)$ which we will use in \cref{sec:rep} to decompose $\Sk(\mathcal{H}_2)$. 
We shall also relate these loop actions to the operators generating the Arthamonov--Shakirov, genus $2$, spherical DAHA in \cref{ASlink}.
For the loops in \cref{fig:cycles}, we shall use the theta b{}asis for \(\Sk(\mathcal{H}_2)\), which is the set \(\{n(i,j,k)\}\) for all admissible triples \((i,j,k)\).
For the loops in \cref{fig:cycles2}, we use the dumbbell basis for \(\Sk(\mathcal{H}_2)\), which is the set \(\{m(i,j,k)\}\) for all \(i,j,k\) such that \((i,i,j)\) and \((k, k, j)\) are admissible\footnote{Note that the restrictions on $i,j,k$ are exactly those required for the edges entering each trivalent vertex to form an admissible triple, and thus for the trivalent vertex to be well defined.}.

\begin{figure}[!h]
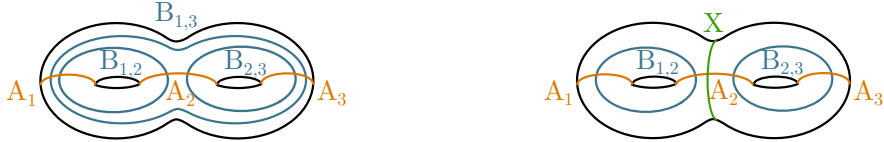

\centering
\begin{subfigure}{.4\textwidth}
  \centering
  \diagramhh{figures}{cycles}{0pt}{0pt}{0.3}
  \caption{A set of generators of $\Sk(\Sigma_{2,0})$ which we act on $\Sk(\mathcal{H}_2)$ with basis $n(i,j,k)$.}
  \label{fig:cycles}
\end{subfigure}
\hspace{5mm}
\begin{subfigure}{.4\textwidth}
  \centering
  \diagramhh{figures}{cycles2}{0pt}{0pt}{0.3}
  \caption{Another set of generators of $\Sk(\Sigma_{2,0})$ which we act on $\Sk(\mathcal{H}_2)$ with basis $m(i,j,k)$.}
  \label{fig:cycles2}
\end{subfigure}
\caption{Generating loops}\label{fig:cyclesall}
\end{figure}

\begin{defn}
Let $(i, j, k)$ be an admissible triple. We define the coefficients $D_{a,b}(i,j,k)$ for $a,b = \pm 1$ as follows
\begin{alignat*}{5}
D_{1,-1}(i,j,k) &= -\frac{\qint{\frac{j+k-i}{2}}^2}{ \qint{j} \qint{j+1}} 
&&D_{-1,-1}(i,j,k) &&= \frac{\qint{\frac{i+j+k+2}{2}}^2 \qint{\frac{i+j-k}{2}}^2}{ \qint{i} \qint{i+1} \qint{j} \qint{j+1}} \\
D_{-1,1}(i,j,k)\;&= -\frac{\qint{\frac{i+k-j}{2}}^2}{ \qint{i} \qint{i+1}}
&\qquad \quad& \centermathcell{D_{1,1}(i,j,k)} &&= 1
\end{alignat*}
where the coefficient is defined to be $0$ if the denominator is $0$.
\end{defn}

\begin{thm}
\label{naction}
Let $(i, j, k)$ be an admissible triple. The $A$-loops act on the theta basis of $\Sk(\mathcal{H}_2)$ by scalars
\begin{align*}
     A_1 \cdot n(i,j,k) &= (-s^{2i+2} -s^{-2i-2}) n(i,j,k)\\
    A_2 \cdot n(i,j,k) &= (-s^{2j+2} -s^{-2j-2}) n(i,j,k)\\
    A_3 \cdot n(i,j,k) &= (-s^{2k+2} -s^{-2k-2}) n(i,j,k)
\end{align*}
whilst the $B$-loops act as follows
\begin{align}
    B_{12} \cdot n(i,j,k) &= \sum_{a,b \in \{-1,1\}} D_{a,b}(i,j,k) n(i+a, j+b, k) \label{eq:nact}\\
    B_{13} \cdot n(i,j,k) &= \sum_{a,b \in \{-1,1\}} D_{a,b}(i,k,j) n(i+a, j, k+b) \notag\\
    B_{23} \cdot n(i,j,k) &= \sum_{a,b \in \{-1,1\}} D_{a,b}(j,k,i) n(i, j+a, k+b)\notag
\end{align}
\end{thm}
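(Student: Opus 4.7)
The plan splits into two parts according to the two types of generators.

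First, for the $A$-loops: after pushing $A_i$ off the Heegaard surface into $\mathcal H_2$, each becomes an unknotted meridian loop encircling precisely one edge of the theta graph underlying $n(i,j,k)$ (the $i$-edge for $A_1$, the $j$-edge for $A_2$, the $k$-edge for $A_3$; for $A_2$ this uses that the separating curve on $\Sigma_{2,0}$ becomes a meridian of the middle edge once pushed in). The standard diagrammatic identity that a colour-$1$ loop linking a colour-$n$ Jones--Wenzl strand equals the scalar $-s^{2n+2}-s^{-2n-2}$ times the strand then yields all three diagonal formulas. This identity follows from an elementary induction using only the Jones--Wenzl recursion and the defining skein relation.

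Second, for the $B$-loops, it is enough to handle $B_{12}$ in detail; the formulas for $B_{13}$ and $B_{23}$ follow by the obvious $S_3$-symmetry permuting the three edges of the theta graph, reflected in the substitutions $(i,j,k)\mapsto(i,k,j)$ and $(i,j,k)\mapsto(j,k,i)$ in the statement. After pushing $B_{12}$ into $\mathcal H_2$, it becomes a colour-$1$ loop linking precisely the two edges coloured $i$ and $j$ of the theta graph and disjoint from the $k$-edge. The key diagrammatic identity I would use is trivalent-vertex fusion: a colour-$1$ strand encircling a colour-$n$ Jones--Wenzl edge just before that edge enters a trivalent vertex can be absorbed into the vertex, decomposing as a sum over the two allowed colour changes $n\mapsto n\pm 1$, with coefficients expressible as ratios of theta-graph and tetrahedral ($6j$) evaluations. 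Applying this independently at the two trivalent vertices of the theta graph produces a four-term sum indexed by $(a,b)\in\{\pm 1\}^2$, which matches the structure of the formula in the statement.

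The main obstacle is identifying these four fusion coefficients in closed form as the $D_{a,b}(i,j,k)$ of the statement. The leading term $D_{1,1}=1$ is essentially free, since the highest-weight fusion contribution involves no normalisation from the Jones--Wenzl recursion. The mixed terms $D_{\pm 1,\mp 1}$ involve a single Jones--Wenzl recursion step at one vertex and produce a ratio of the form $-\qint{\cdot}^{\,2}/(\qint{\cdot}\qint{\cdot+1})$. The coefficient $D_{-1,-1}$ requires recursing at both vertices and yields the product form in the statement after substituting the standard closed-form evaluations of theta and tetrahedral diagrams from \cite{masbaum1994, kauffmanBook1994}. The algebra here is routine but tedious; this step is where the substantive work lies, and is presumably carried out diagrammatically in \cref{appendix:cals}. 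As a quick sanity check, the $(i,j,k)=(0,0,0)$ case forces three of the four $D_{a,b}(0,0,0)$ to vanish (via zero numerators or the stated zero-denominator convention), leaving $B_{12}\cdot n(0,0,0)=n(1,1,0)$, in agreement with the identification $B_{12}=n(1,1,0)$ recorded after \cref{lemma:poly}.
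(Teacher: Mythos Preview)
Your proposal is correct and tracks the paper's argument closely. For the $A$-loops the paper invokes exactly the encircling identity \eqref{bookloop} you describe, and for $B_{12}$ it likewise isotopes the loop to run alongside the $i$- and $j$-edges and then applies the Jones--Wenzl recursion once on each of the resulting $(i{+}1)$- and $(j{+}1)$-strand bundles to produce the four terms, with $B_{13}$ and $B_{23}$ handled by symmetry. The only substantive difference is in how the four coefficients are extracted: rather than appealing to theta and tetrahedral ($6j$) evaluations as you suggest, the paper uses two lighter local identities from \cite{masbaum1994}, namely the leaf reduction \eqref{paperleaf} and the bubble-on-a-strand reduction \eqref{papercircstrand}, applied directly to the diagrams left after the Wenzl steps; the $D_{a,b}$ then drop out with no further algebra. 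Your $6j$ route would also work, but the paper's identities are sharper here and avoid the bookkeeping of general recoupling formulas. A minor point of phrasing: the relevant operation is fusion of the colour-$1$ strand running \emph{parallel} to the $i$- and $j$-edges (i.e.\ Wenzl on the combined bundles), not a loop encircling a single edge near a vertex, which would be an $A$-type scalar.
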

\begin{proof}
  See \cref{prop:AactionN} and \cref{prop:BactionN}. 
\end{proof}

Before computing actions in the dumbbell basis $m(i,j,k)$, we use the previous theorem to show the following corollary. 
%(which holds under the assumption that $s$ is not a root of unity which we follow throughout this paper 
%\JC{Maybe put the conditions in the corollary?}) \PS{done}
\begin{cor}\label{cor:irreducible}
When the coefficient ring is $\mathbb{Q}(s)$, the skein module $\Sk(\mathcal H_2)$ is irreducible as a module over the skein algebra $\Sk(\Sigma_{2,0})$. 
\end{cor}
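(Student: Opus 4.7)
The plan is to show that any nonzero $\Sk(\Sigma_{2,0})$-submodule $M \subseteq \Sk(\mathcal H_2)$ contains every theta basis vector $n(i,j,k)$, which would force $M = \Sk(\mathcal H_2)$. The argument proceeds in three steps: use the simultaneously diagonal $A$-operators to isolate a single basis element $n(i_0,j_0,k_0) \in M$; apply the $B$-operators repeatedly to lower this to $n(0,0,0) \in M$; then raise $n(0,0,0)$ to reach every admissible triple.

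For the first step, take any nonzero $v = \sum c_{ijk}\, n(i,j,k) \in M$ with finite support $S$, and note that the $A_1$-eigenvalues $-s^{2i+2}-s^{-2i-2}$ are pairwise distinct in $\mathbb{Q}(s)$ as $i$ ranges over the nonnegative integers appearing in $S$, and similarly for $A_2$ and $A_3$. A product of Lagrange-style polynomials in $A_1,A_2,A_3$ therefore annihilates every summand of $v$ except a chosen $n(i_0,j_0,k_0)$, which it scales by a nonzero element of $\mathbb{Q}(s)$. Hence $n(i_0,j_0,k_0) \in M$.

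For the lowering step, parameterize an admissible triple uniquely as $(i,j,k) = (a+b,\, a+c,\, b+c)$ with $a,b,c \in \mathbb{Z}_{\geq 0}$. A direct calculation from \cref{naction} shows that the coefficient $D_{-1,-1}(i,j,k)$ of $n(i-1,j-1,k)$ in $B_{12}\cdot n(i,j,k)$ simplifies to $\qint{a+b+c+1}^2 \qint{a}^2 / (\qint{i}\qint{i+1}\qint{j}\qint{j+1})$, which is nonzero in $\mathbb{Q}(s)$ precisely when $a \geq 1$; analogously the lowering coefficients of $B_{13}$ and $B_{23}$ are nonzero iff $b \geq 1$ and $c \geq 1$ respectively. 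So whenever $(a,b,c) \neq (0,0,0)$, at least one $B$-operator has a nonzero lowering coefficient; applying it and then using an $A$-projection as in the first step places the new basis vector in $M$. Iterating decrements $(a,b,c)$ coordinate-by-coordinate to $(0,0,0)$ and puts $n(0,0,0)$ into $M$.

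For the raising step, $D_{1,1}\equiv 1$ implies that applying $B_{12}$, $B_{13}$, or $B_{23}$ and then projecting via the $A$-operators always isolates the basis vector with two of the indices raised by $1$. Iterating from $n(0,0,0)$ reaches every admissible $n(a+b,a+c,b+c) \in M$, finishing the proof. The main obstacle, which is really bookkeeping, is verifying that no quantum integer, denominator, or eigenvalue difference used above accidentally vanishes; this follows since $\qint{n}$ is a nonzero Laurent polynomial in $s$ for $n \neq 0$ and the $A_1$-eigenvalue differences $(s^{2i+2}+s^{-2i-2})-(s^{2i'+2}+s^{-2i'-2})$ for distinct $i,i'\geq 0$ are nonzero in $\mathbb{Q}(s)$, which is where the coefficient ring enters crucially.
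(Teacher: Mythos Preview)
Your proof is correct and follows essentially the same strategy as the paper's: use the diagonal $A$-operators to isolate a basis vector, then use the $B$-operators together with the parametrization $(i,j,k)=(a+b,a+c,b+c)$ of admissible triples to walk down to $n(0,0,0)$. The only cosmetic differences are that the paper invokes the polynomial algebra structure of $\Sk(\mathcal H_2)$ (\cref{lemma:poly}) in place of your explicit raising argument via $D_{1,1}\equiv 1$, and the paper applies $B_{12}^d$, $B_{13}^x$, $B_{23}^y$ in bulk rather than reprojecting after each step; neither change affects the substance of the argument.
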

\begin{proof}
First, we note that by \cref{lemma:poly}, the skein module $\Sk(\mathcal{H}_2)$ is actually an algebra, and as an algebra it is isomorphic to the polynomial algebra in variables $B_{12},B_{23},B_{13}$. Furthermore, the loops in $\Sigma_{2,0}$ with these labels act by multiplication operators, which shows that $\Sk(\mathcal H_2)$ is generated as a module by $n(0,0,0)$ (which corresponds to $1$ in the polynomial algebra). It therefore suffices to show that if $x \in \Sk(\mathcal H_2)$ is an arbitrary element, then $n(0,0,0)$ is contained in the subspace $\Sk(\Sigma_{2,0}) \cdot x$.

Second, we note that the operators $A_{i}$ act diagonally on $n(i,j,k)$, and that the joint eigenspaces of the $A_i$ are 1-dimensional when $s$ is not a root of unity. This implies that if $n(i,j,k)$ appears with a nonzero coefficient in the expansion of $x$ in the theta basis, then $n(i,j,k) \in \Sk(\Sigma_{2,0}) \cdot x$. Therefore, it suffices to show that if $n(i,j,k)$ is arbitrary, then $n(0,0,0)$ appears with a nonzero coefficient in some element in $\Sk(\Sigma_{2,0})\cdot n(i,j,k) $.

Finally, \cref{lem:adform} shows that any admissible triple $(i,j,k)$ can be written as $(x+d, y+d, x+y)$ for some $x,y,d \geq 0$. If we start with an admissible triple in that form, we first apply $B_{12}^d$ to $n(i,j,k)$ to obtain a sum of basis elements which has a nonzero coefficient of $n(x,y,x+y)$. We then apply $B_{13}^x$ to this sum to obtain a nonzero coefficient of $n(0,y,y)$, and finally apply $B_{23}^y$ to obtain a nonzero coefficient of $n(0,0,0)$ as desired.
\end{proof}

\begin{thm}
\label{maction}
Let \(i,j,k\) be integers such that the triples \((i, i, j)\) and \((k, k, j)\) are admissible. The $X$-loop and two of the $A$-loops act on the dumbbell basis of $\Sk(\mathcal{H}_2)$ by scalars
\begin{align*}
X \cdot m(i, j, k) & = (-s^{2j+2} - s^{-2j-2}) m(i, j,k ) \notag\\
A_1 \cdot m(i, j, k) &=  (-s^{2i+2} - s^{-2i-2}) m(i, j,k )\notag \\
A_3 \cdot m(i, j, k) &= (-s^{2k+2} - s^{-2k-2}) m(i, j,k ) \notag
\end{align*}
whilst the $B$-loops $B_{12}$ and $B_{23}$ act as follows
\begin{align*}
B_{12} \cdot m(i, j, k) &= {
	\begin{cases} m(i+1, j, k) & \text{ for } i = \frac{j}{2} \\
	 m(i+1, j, k) +\frac{\qint{2i+j+1}\qint{i-j/2}}{\qint{i}\qint{i+1}} m(i-1,j,k) & \text{ for } i> j/2 
\end{cases}} \label{eq:mact} \\
B_{23} \cdot m(i, j, k) &= {
	\begin{cases} m(i, j, k+1) & \text{ for } k = \frac{j}{2} \\
	 m(i, j, k+1) +\frac{\qint{2k+j+1}\qint{k-j/2}}{\qint{k}\qint{k+1}} m(i,j,k-1) & \text{ for } k> j/2 
\end{cases}}
\end{align*}
The action of the middle $A$-loop $A_2$ on the $m$-basis of $\Sk(\mathcal{H}_2)$ is more complex:
\begin{alignat*}{3}
    A_2 \cdot m(i,j,k)= &&{}-\bigg((s^2-s^{-2})^2 \frac{\qint{j/2}^4\qint{i+j/2+1}\qint{k + j/2+1}}{\qint{j-1}\qint{j}^2\qint{j+1}}\bigg) &m(i, j-2, k) \\
    &&-\bigg((s^2-s^{-2})^2\qint{i- j/2}\qint{k - j/2}\bigg)&m(i, j+2, k)\\
    &&+\bigg(\big(- s^{-2(i+k+1)} - s^{2(i+k+1)}\big) + K\bigg) &m(i,j,k)
\end{alignat*}
where 
\[K = (s^2-s^{-2})^2\left(\frac{\qint{j/2 +1}^2\qint{i- j/2}\qint{k -j/2}}{\qint{j+1}\qint{j+2}} +  \frac{\qint{j/2}^2\qint{i+j/2+1}\qint{k + j/2+1}}{\qint{j}\qint{j+1}}\right)\]
\end{thm}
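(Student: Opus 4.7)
The plan is to verify each loop action by direct diagrammatic manipulation in the skein module of $\mathcal{H}_2$, organizing the computations into three groups of increasing difficulty: the three diagonal actions, the two $B$-loop actions, and finally $A_2$.

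For the diagonal actions of $X$, $A_1$, and $A_3$: each of these loops can be isotoped inside $\mathcal{H}_2$ so as to encircle a single Jones-Wenzl projector of the dumbbell diagram $m(i,j,k)$. Specifically, $X$ encircles the middle $j$-strand, while $A_1$ and $A_3$ encircle the $i$- and $k$-colored bells on the left and right respectively. The standard encirclement identity, that a single strand around a JW projector of color $n$ acts as multiplication by $-s^{2n+2}-s^{-2n-2}$, immediately yields the three claimed eigenvalues.

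For $B_{12}$ and $B_{23}$: the loop $B_{12}$ can be isotoped inside $\mathcal{H}_2$ to lie as a single parallel strand alongside the left $i$-colored bell of the dumbbell (and similarly for $B_{23}$ on the right). Absorbing this parallel strand into the JW projector via the Jones-Wenzl recursion produces basis elements where the bell color shifts by $\pm 1$. The coefficient of the $i+1$ term is the leading coefficient of the recursion, namely $1$; the $i-1$ coefficient is a ratio of quantum integers coming from the recursion combined with the trivalent-vertex normalization, which simplifies to $\qint{2i+j+1}\qint{i-j/2}/(\qint{i}\qint{i+1})$. A sanity check is that this coefficient vanishes precisely at $i=j/2$, matching the boundary case where $m(i-1,j,k)$ is not an admissible basis element.

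The $A_2$ action will be the main obstacle. My plan is to use the change-of-basis formula from \cref{thm:ChangeofBasis}: expand $m(i,j,k)$ in the $n$-basis via quantum 6j-symbols, apply the diagonal action $A_2\cdot n(i,j',k) = (-s^{2j'+2}-s^{-2j'-2})n(i,j',k)$ from \cref{naction}, and convert back to the $m$-basis. Admissibility of the 6j-symbols restricts the intermediate $j'$ so that the final result is supported only at $j' \in \{j-2, j, j+2\}$, explaining the tridiagonal form in $j$; the $j\pm 2$ coefficients come from single 6j-symbol products, while the diagonal coefficient $K$ arises as a sum that must collapse to the claimed closed form. An alternative, fully diagrammatic approach is to slide $A_2$ past the two trivalent vertices of the dumbbell and to resolve the resulting bubbles using Jones-Wenzl absorption and theta-graph evaluations. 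In either strategy, the principal technical obstacle is the quantum-integer arithmetic required to recognize the off-diagonal coefficients and the scalar $K$ in their stated closed forms; I would handle this following the systematic diagrammatic style already developed for the $n$-basis action in \cref{naction} and defer the bookkeeping to the appendices.
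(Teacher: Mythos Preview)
Your treatment of the diagonal actions of $X$, $A_1$, $A_3$ and of the $B$-loops $B_{12}$, $B_{23}$ matches the paper's approach essentially exactly: the paper uses the encirclement identity for the first group and the Jones--Wenzl recursion together with the Masbaum--Vogel bubble identity for the second, just as you outline.

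For $A_2$, however, your primary plan has a genuine gap. You propose to expand $m(i,j,k)$ via $6j$-symbols, apply the diagonal $A_2$-action, and convert back, and you claim that ``admissibility of the $6j$-symbols restricts the intermediate $j'$ so that the final result is supported only at $j' \in \{j-2, j, j+2\}$.'' This is not correct: the admissibility constraints on the $6j$-symbols allow $b$ to range over all even integers with $0 \le b \le 2\min(i,k)$, not just three values. The tridiagonal structure emerges only after a large sum over the intermediate label $a$ collapses, and extracting the stated closed forms from that double sum is not straightforward. The paper in fact records exactly this $6j$-symbol expression in a preliminary proposition and then explicitly rejects it as ``not very explicit'' and ``not sufficient for our purposes,'' precisely because one cannot even read off the number of terms.

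What the paper actually does for $A_2$ is your ``alternative'' route, carried out in detail: a direct diagrammatic computation that first resolves the two crossings of $A_2$ with the dumbbell (via a sequence of lemmas controlling single-strand crossings and twists), reducing $A_2 \cdot m(i,j,k)$ to a scalar multiple of $m(i,j,k)$ plus $-(s^2-s^{-2})^2[i]_s[k]_s$ times a diagram with an extra strand threading the bar. Two further lemmas then absorb that threading strand through the two trivalent vertices, producing the three terms $m(i,j-2,k)$, $m(i,j,k)$, $m(i,j+2,k)$ with the stated coefficients. The tridiagonality is thus visible at the diagrammatic level from the outset, rather than emerging from cancellation in a $6j$ sum.
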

\begin{proof}
  See \cref{prop:AXactionM}, \cref{prop:BactionM} and \cref{thm:appendix}.
\end{proof}

\begin{rmk}\label{rmk:coeff}
In the above expressions, we have used the convention that if $j=0$, then any coefficient term with $[j]$ in the denominator is $0$. In fact, this follows by inspection of these coefficients, since the term $[j/2]^2_s/[j]_s$ (and hence each coefficient itself) is equal to zero when $j=0$.
\end{rmk}
% !TEX root = main.tex
\section{The Handlebody and Representations of DAHAs}\label{sec:Handdaha}
\label{sec:rep}
There are two inclusions of \(\Sigma_{1,1}\)  into \(\Sigma_{2,0}\)  (which we call left and right respectively) and an inclusion of \(\Sigma_{0,4}\) into \(\Sigma_{2,0}\) which are depicted in \cref{fig:embeddings}.
\begin{figure}[!h]
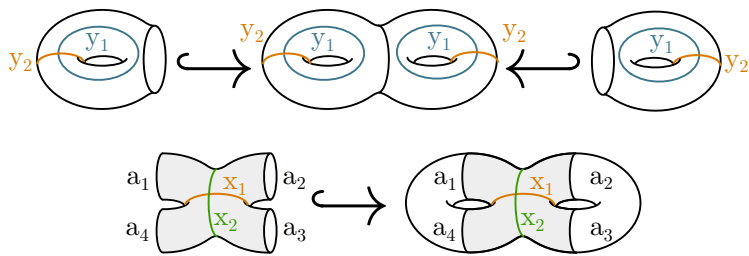

    \centering
    \diagramhh{figures}{surfaceemb}{0pt}{0pt}{0.3}
    \caption{Surface embeddings}\label{fig:embeddings}
\end{figure}

These inclusions induce actions of \(\Sk(\Sigma_{1,1})\) and \(\Sk(\Sigma_{0,4})\) on \(\Sk(\mathcal{H}_2)\). The action of the generators of \(\Sk(\Sigma_{1,1})\) and \(\Sk(\Sigma_{0,4})\) on the \(m(i, j, k)\) basis of \(\Sk(\mathcal{H}_2)\) has already been computed in \cref{sec:calculations}.
In this section we use this to describe how the skein module of the genus 2 handlebody decomposes as a module in terms of the polynomial representations of the $A_1$ and $(C^\vee_1, C_1)$ spherical DAHAs.

\subsection{Module isomorphisms}
In this subsection we provide a useful technical lemma for constructing maps between certain modules over spherical DAHAs. Suppose $B$ is an algebra over a commutative ring $R'$. Suppose $B$ is generated by elements $b_1,b_2,b_3$ that satisfy  three relations of the following form:
\begin{equation}\label{eq:genrels}
[b_i, b_{i+1}]_q = c_{i+2} b_{i+2} + z_{i+2}
\end{equation}
where the indices are taken modulo 3, and where $c_i$ and $z_i$ are elements of $R'$, with the $c_i$ invertible
(Note that we are not assuming that this is a presentation of $B$ as an algebra, only that these three relations hold).

\begin{lem}\label{lem:isos}
Suppose $M$ and $N$ are modules over $B$, and that as modules over the subalgebra $R'[b_1]$, they are generated by $m_0 \in M $ and $n_0 \in N$, respectively. Suppose furthermore that $M$ is free as a module over $R'[b_1]$. Then the assignment $m_0 \mapsto n_0$ extends uniquely to a $R'[b_1]$-linear surjection
\[
\varphi: M \twoheadrightarrow N
\]
Furthermore, suppose that for some $\alpha, \beta, c \in R'$ we have the following identities:
\begin{alignat}{5}
    b_2    &\cdot m_0 &\;=\;& \alpha \, m_0 &\qquad b_2b_1 &\cdot m_0 &\;=\;& \beta a_1 m_0 + c\label{eq:basecase}\\
    b_2    &\cdot n_0 &=\;& \alpha \, n_0 \notag &\qquad b_2b_1 &\cdot n_0 &=\;& \beta a_1 n_0 + c\notag
\end{alignat}
Then $\varphi$ is a map of $B$-modules.
\end{lem}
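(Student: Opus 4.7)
My plan is as follows. First, since $M$ is free over $R'[b_1]$ on the generator $m_0$, every $x \in M$ has a unique expression $x = p(b_1) m_0$ with $p \in R'[b_1]$, so setting $\varphi(p(b_1) m_0) := p(b_1) n_0$ defines an $R'[b_1]$-linear map $M \to N$ that is the unique such map extending $m_0 \mapsto n_0$. Since $n_0$ generates $N$ as an $R'[b_1]$-module, $\varphi$ is surjective, which disposes of the first half of the lemma.

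To promote $\varphi$ to a $B$-linear map, I note that because $B$ is generated by $b_1, b_2, b_3$ and $\varphi$ is already $b_1$-linear, it suffices to verify that $\varphi$ commutes with $b_2$. Commutation with $b_3$ is then automatic, since the relation $[b_1, b_2]_q = c_3 b_3 + z_3$ together with invertibility of $c_3$ gives $b_3 = c_3^{-1}(q b_1 b_2 - q^{-1} b_2 b_1 - z_3)$. Because $M$ is spanned over $R'$ by elements of the form $b_1^n m_0$, $b_2$-linearity reduces to proving
\[
\varphi\bigl(b_2 b_1^n m_0\bigr) = b_2 b_1^n n_0 \qquad \text{for every } n \geq 0,
\]
which I would establish by induction on $n$, the cases $n = 0, 1$ being exactly the hypotheses \eqref{eq:basecase} (reading the constant $c$ as $c \cdot m_0$, and $c \cdot n_0$ respectively).

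For the inductive step, the key identity is a three-term recursion in $B$ obtained by substituting the expression for $b_3$ above into the relation $[b_3, b_1]_q = c_2 b_2 + z_2$ and simplifying; this yields
\[
b_2 b_1^2 = (q^2 + q^{-2})\, b_1 b_2 b_1 - b_1^2 b_2 - c_2 c_3\, b_2 + (q^{-1} - q)\, z_3\, b_1 - c_3 z_2
\]
inside $B$. Multiplying on the right by $b_1^{n-1}$ expresses $b_2 b_1^{n+1}$ as an explicit $R'[b_1]$-linear combination of $b_2 b_1^n$, $b_2 b_1^{n-1}$, and elements of $R'[b_1]$. Applying both sides to $m_0$, invoking the inductive hypothesis on the $b_2 b_1^k m_0$ terms, and using $R'[b_1]$-linearity of $\varphi$, every term matches the corresponding term obtained from applying the same identity to $n_0$, yielding the desired equality for $n+1$. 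The one genuinely non-obvious step in the proof is isolating this three-term recursion (which only requires inverting $c_3$, not all of the $c_i$); once it is in hand, the induction runs through routinely.
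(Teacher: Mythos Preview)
Your argument is correct and follows essentially the same strategy as the paper: define $\varphi$ by $R'[b_1]$-linearity, then induct on the $b_1$-degree to verify compatibility with the remaining generators. The only difference is in the bookkeeping of the inductive step. The paper keeps both $b_2$ and $b_3$ in play and uses the one-step rewriting $b_2 b_1 = q^2 b_1 b_2 - q c_3 b_3 - q z_3$ (together with the analogous $b_3 b_1 = q^{-2} b_1 b_3 + q^{-1} c_2 b_2 + q^{-1} z_2$), so each inductive step for $b_2$ appeals to the inductive hypothesis for $b_3$ and vice versa. You instead eliminate $b_3$ at the outset and derive the purely-$b_2$ three-term recursion
\[
b_2 b_1^2 = (q^2+q^{-2})\,b_1 b_2 b_1 - b_1^2 b_2 - c_2 c_3\,b_2 + (q^{-1}-q)\,z_3 b_1 - c_3 z_2,
\]
which yields a self-contained two-step induction for $b_2$ alone. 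Both executions use only the invertibility of $c_3$; yours has the mild advantage of isolating a clean recursion, while the paper's avoids the preliminary algebra of eliminating $b_3$.
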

\begin{rmk}
Before we prove this technical lemma, let us explain how it will be used. The skein algebras of the 4-punctured sphere and once-punctured torus map to spherical double affine Hecke algebras, and this allows us to restrict the `polynomial representations' of the spherical DAHAs to modules over these skein algebras. Both these skein algebras are generated by 3 elements that satisfy relations of the form \eqref{eq:genrels}. 
We will use this lemma to construct maps from the polynomial representations coming from spherical DAHAs to the skein module of the genus 2 handlebody. 
\end{rmk}
\begin{proof}
Since $M$  is free of rank 1 over $R'[b_1]$, the claimed surjection exists and  is uniquely defined by $\varphi(f(b_1)m_0) = f(b_1)n_0$. By construction $\varphi$ is a map of $R'[b_1]$-modules.  What remains to be shown is that this map commutes with the action of $b_2$ and $b_3$. We will prove this by induction on the degree of $f(b_1)$. For the base case $f(b_1) = 1$, the required commutativity for $b_2$ follows from the first two equations in \eqref{eq:basecase}. For $b_3$, the relation \eqref{eq:genrels} shows $b_3 = c_3^{-1} [b_1,b_2]_q - c_3^{-1}z_3$, and then commutativity follows from the second two equations of \eqref{eq:basecase}.

For the inductive step, assume $ \varphi(b_i f(b_1)m_0) = b_i f(b_1) n_0$ for all $f(b_1)$ of degree at most $n$. We want to show that $\varphi(b_i b_1^{n+1} m_0) = b_i b_1^{n+1} n_0$ for $i=2,3$. We have
\begin{align*}
    \varphi(b_2 b_1^{n+1} m_0) &= \varphi((b_2 b_1) b_1^n)\\
    &= \varphi((q^2b_1 b_2 - qc_3 - q z_3) b_1^n m_0) \\
    &= (q^2 b_1 b_2 - qc_3 - q z_3)b_1^n n_0\\
    &= b_2 b_1^{n+1} n_0
\end{align*}
where the step from the second to third line follows from the induction hypothesis and $R'[b_1]$-linearity of $\varphi$. This proves commutativity for $b_2$, and the proof for $b_3$ is similar.
\end{proof}

\subsection{The action of \(\Sk(\Sigma_{1,1})\) on \(\Sk(\mathcal{H}_2)\)}
In this section we first consider the action \(\cdot = \cdot_L\) as \(\cdot_R\) is analogous.
From \cref{thm:torusskeinpres} we know that the skein algebra \(\Sk(\Sigma_{1,1})\) is generated by the loops \(y_1, y_2, y_3\), but as \[y_3 = \frac{[y_1, y_{2}]_{s}}{(s^2 - s^{-2})}\] it is sufficient to determine the action of \(y_1\) and \(y_2\). These actions were computed in previous sections, and we recall the results in the present notation.
\begin{align}
    y_2 \cdot m(i, j, k) &= (-s^{2i+2} - s^{-2i-2}) m(i,j,k) \label{eq:sktaction}\\
    y_1 \cdot m(i, j, k) &= {
	\begin{cases} m(i+1, j, k) & \text{ for } i = \frac{j}{2} \\
	 m(i+1, j, k) +\frac{\qint{2i+j+1}\qint{i-\frac{j}{2}}}{\qint{i}\qint{i+1}} m(i-1,j,k) & \text{ for } i> j/2 \end{cases}} \notag
\end{align}

Let \(V = \Sk( \mathcal{H}_2)\) with basis \(\{m(i,j,k)\}\), so
\[
    \Sk(\mathcal H_2) =: V = \bigoplus_{j \text{ is even }} \bigoplus_{i,k = j/2}^{\infty} \rspan \{ m(i,j,k)\} 
\]
Let \(V_{j,k}\) denote the subspace with fixed \(j\) and \(k\), and $V_j$ denote the subspace with fixed $j$, so
\[
    V_{j,k} = \bigoplus_{i = j/2}^{\infty} \rspan \{ m(i,j,k)\} ,\quad \quad V_{j} = \bigoplus_{i=j/2}^\infty \bigoplus_{k=j/2}^\infty \rspan \{ m(i,j,k)\} 
\]
The subspace $V_{j,k}$ is invariant under the $\cdot_L$ action of \(y_1\) and \(y_2\), and $V_j$ is invariant under both the $\cdot_L$ and $\cdot_R$ actions. Therefore there are representations
\begin{align*}
\rho_{j,k}:& \Sk(\Sigma_{1,1}) \to \End_R(V_{j,k})\\
\rho_j:& \Sk(\Sigma_{1,1}) \otimes_R \Sk(\Sigma_{1,1}) \to \End_R(V_{j})
\end{align*}
defined by this action. We first study the representation $\rho_{j,k}$ and then use this to study the representation $ \rho_j$.

The submodule \(V_{j,k}\) is free of rank \(1\) as a module over \(R[y_1]\) with generator \(m_0 := m(j/2, j, k)\). Furthermore, using equation \eqref{eq:sktaction}, we see that
\begin{align}
y_2 \cdot m_0 &= (-s^{j+2} - s^{-j-2}) m_0\notag\\
y_2 y_1 \cdot m_0 &=
(-s^{2(j/2 + 1)+2} - s^{-2(j/2 + 1)-2}) y_1 \cdot m_0 \label{eq:skactreminder}\\
X \cdot m(i, j, k) & = (-s^{2j+2} - s^{-2j-2}) m(i, j,k )\notag
\end{align}

Recall that the $A_1$ spherical DAHA acts on the polynomial representation $P(q,t) = R[x]$. We can restrict this action to the skein algebra using the algebra map $\Sk(\Sigma_{1,1}) \to SH_{q,t}$ of \cref{prop:sktodahaA}. We recall that in the current notation, this map is uniquely determined  by the following assignments:
\begin{equation}\label{eq:isoreminder}
y_1 \mapsto x,\quad \quad y_2 \mapsto y,\quad \quad X \mapsto -q^2 t^{-2} - q^{-2} t^2.
\end{equation}
\begin{thm}\label{thm:a1decomp}
The $\Sk(\Sigma_{1,1})$-module $V$ is isomorphic to the direct sum
\begin{equation}\label{eq:ptdecomp}
\bigoplus_{\substack{j,k \geq 0,\\ j \textrm{ even}}} P(q=s, t=-s^{-j-2})
\end{equation}
\end{thm}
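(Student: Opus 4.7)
The plan is to use Lemma \ref{lem:isos} to identify each subspace $V_{j,k}$ (for $j\geq 0$ even and $k\geq j/2$) with a polynomial representation of the $A_1$ spherical DAHA at a specific parameter specialisation, and then assemble these identifications into the full decomposition. First I would observe that the splitting $V=\bigoplus_{j,k} V_{j,k}$ is a decomposition of $R$-vector spaces and that each $V_{j,k}$ is invariant under the left $\Sk(\Sigma_{1,1})$-action: $y_2$ acts diagonally on $m(i,j,k)$ and $y_1$ only shifts the index $i$, while $y_3$ is determined from $y_1$ and $y_2$ via Theorem \ref{thm:torusskeinpres}. It therefore suffices to show $V_{j,k}\cong P(q=s,\,t=-s^{-j-2})$ as $\Sk(\Sigma_{1,1})$-modules for each fixed admissible pair $(j,k)$.

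To apply Lemma \ref{lem:isos} I would set $B=\Sk(\Sigma_{1,1})$ with generators $b_i=y_i$ and relations from Theorem \ref{thm:torusskeinpres}, so that $c_{i+2}=s^2-s^{-2}$ and $z_{i+2}=0$. The source module is $M=P(q=s,\,t=-s^{-j-2})$ with distinguished generator $m_0=1$; by construction of the polynomial representation, $M$ is free of rank one over $R[x]=R[b_1]$. The target module is $N=V_{j,k}$ with distinguished generator $n_0=m(j/2,j,k)$. Freeness of $N$ over $R[y_1]$ follows from the $y_1$-action formula in \eqref{eq:sktaction}: iteratively applying $y_1\cdot m(i,j,k)=m(i+1,j,k)+(\text{lower order in }i)$ starting from $n_0$ yields $y_1^a\cdot n_0=m(j/2+a,j,k)+(\text{lower})$, giving a unitriangular change of basis between $\{y_1^a n_0:a\geq 0\}$ and $\{m(j/2+a,j,k):a\geq 0\}$.

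Next I would verify the base case conditions \eqref{eq:basecase}. The skein-side computation, recorded already in \eqref{eq:skactreminder}, gives $y_2\cdot n_0=-(s^{j+2}+s^{-j-2})\,n_0$ and $y_2 y_1\cdot n_0=-(s^{j+4}+s^{-j-4})\,y_1\cdot n_0$. On the DAHA side, Corollary \ref{cor:a1act} yields $y\cdot 1=t+t^{-1}$ and $yx\cdot 1=(q^2 t^{-1}+q^{-2}t)\,x$. Substituting $q=s$ and $t=-s^{-j-2}$ produces precisely the same scalars, with $c=0$ in the notation of \eqref{eq:basecase}. The lemma then delivers a surjective $\Sk(\Sigma_{1,1})$-module map $\varphi:P(q=s,\,t=-s^{-j-2})\twoheadrightarrow V_{j,k}$ sending $1\mapsto n_0$. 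Since $\varphi$ is $R[y_1]$-linear and sends a free rank-one generator of $M$ to a free rank-one generator of $N$, it must be an isomorphism. Summing over admissible $(j,k)$ gives the claimed decomposition.

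The main obstacle is the freeness of $V_{j,k}$ over $R[y_1]$; the unitriangularity argument above resolves it, but one should check that the coefficient $\qint{2i+j+1}\qint{i-j/2}/(\qint{i}\qint{i+1})$ of the lower-order term never causes a degeneration that would destroy the triangular structure. Working over $R=\mathbb{Q}(s)$, these quantum integers are nonzero in the relevant range $i>j/2$, so the argument goes through and the Lemma \ref{lem:isos} framework applies cleanly.
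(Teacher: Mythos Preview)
Your proposal is correct and follows essentially the same route as the paper: both apply Lemma~\ref{lem:isos} with $M=P(q=s,t=-s^{-j-2})$, $N=V_{j,k}$, verify the base-case identities \eqref{eq:basecase} via Corollary~\ref{cor:a1act} and \eqref{eq:skactreminder}, and then sum over $(j,k)$. Your write-up is in fact slightly more careful than the paper's in one respect: Lemma~\ref{lem:isos} only yields a surjection $M\twoheadrightarrow N$, and you supply the unitriangularity argument showing $V_{j,k}$ is free of rank one over $R[y_1]$, from which injectivity follows; the paper simply asserts the isomorphism.
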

\begin{proof}
By equation \eqref{cor:a1dprep}, in the polynomial representation $P(q,t)$ we have
\begin{align*}
    y\cdot 1 &= t+t^{-1}\\
    yx\cdot 1 &= (q^2 t^{-1} + q^{-2}t)x
\end{align*}
When we specialise $q=s$ and $t=-q^{-j-2}$, we see that these formulas are compatible with the formulas \eqref{eq:skactreminder} under the assignments \eqref{eq:isoreminder}, when we send $1 \in P(q,t) = R[x]$ to $m_0 \in V_{j,k}$. Then \cref{lem:isos} shows
\[
P(q=s, t=-q^{-j-2}) \cong V_{j,k}
\]
\end{proof}
In the previous theorem we only decomposed $V$ as a module over the action of one copy of the DAHA, which corresponds to the embedded punctured torus on the left of \cref{fig:embeddings}. This is the reason that the submodules in the decomposition in \eqref{eq:ptdecomp} doesn't depend on $k$ (Conversely, under the (left) action of the right copy of $\Sk(\Sigma_{1,1})$, the $k$-indices vary and the $i$ indices do not). The actions of the skein algebras of the `left' and `right' punctured tori commute with each other, and we can decompose $V$ as a module over the tensor product of these two algebras as follows.

Let $B = \Sk(\Sigma_{1,1}) \otimes_R \Sk(\Sigma_{1,1})$, which acts on $V$ as described in the previous paragraph. If $M$ and $N$ are modules over the $A_1$ DAHA, then $B$ acts on $M \otimes_R N$ via restriction of the (tensor square of) the algebra map $\Sk(\Sigma_{1,1}) \to SH_{q,t}$.

\begin{thm}\label{thm:2a1decomp}
The $B$-module $V$ has the following decomposition
\begin{equation}\label{eq:tptdecomp}
V \cong \bigoplus_{\substack{j \geq 0,\\ j \textrm{ even}}} P(q=s, t=-s^{-j-2}) \otimes_R P(q=s, t=-s^{-j-2})
\end{equation}
\end{thm}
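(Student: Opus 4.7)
The plan is to reduce to a single $j$-eigenspace, then apply the method of Lemma \ref{lem:isos} with two commuting polynomial variables rather than one.

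Since the $B$-module $V$ decomposes as $\bigoplus_{j \geq 0,\; j \text{ even}} V_j$, it suffices to prove $V_j \cong P_j \otimes_R P_j$ for each fixed even $j$, where $P_j := P(q=s,\, t=-s^{-j-2})$. I would set $v_0 := m(j/2, j, j/2) \in V_j$ as a cyclic vector. Inspection of the formulas in Theorem \ref{maction} shows that $V_j$ is free of rank one over the commutative polynomial subring $R[y_1^L, y_1^R] \subseteq B$, because $y_1^L$ and $y_1^R$ raise the $i$-index and $k$-index of $m(i,j,k)$ respectively, with leading coefficient one, and fix $j$. On the other side, $P_j \otimes_R P_j$ is free of rank one over $R[x_L, x_R]$ (with $x_L = x \otimes 1$, $x_R = 1 \otimes x$) with cyclic vector $1 \otimes 1$. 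Under the algebra map of Proposition \ref{prop:sktodahaA}, we have $y_1^L \mapsto x_L$ and $y_1^R \mapsto x_R$, so there is a unique $R[y_1^L, y_1^R]$-linear bijection $\varphi : P_j \otimes_R P_j \to V_j$ sending $1 \otimes 1$ to $v_0$.

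The content of the theorem is then that $\varphi$ is $B$-linear. Since $\Sk(\Sigma_{1,1})$ is generated by $y_1$ and $y_2$ (with $y_3 = [y_1, y_2]_s/(s^2-s^{-2})$), it suffices to check that $\varphi$ intertwines the actions of $y_2^L$ and of $y_2^R$; by the symmetry $L \leftrightarrow R$ I would treat only the left case. Because the left and right subsurfaces are disjoint, $y_2^L$ commutes with every element of the right copy of $\Sk(\Sigma_{1,1})$, so $y_1^R$ slides past $y_2^L$ freely. The inductive argument in the proof of Lemma \ref{lem:isos} therefore adapts verbatim, reducing the check to the base case: matching $y_2^L \cdot v_0$ and $y_2^L y_1^L \cdot v_0$ against the action of $y \otimes 1$ on $1 \otimes 1$ and on $x_L \cdot (1 \otimes 1)$ in $P_j \otimes P_j$. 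This is precisely the computation carried out in the proof of Theorem \ref{thm:a1decomp}.

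The main potential obstacle is that the base-case scalars on the $V_j$ side must not depend on the choice of $k$-coordinate $k = j/2$ used in $v_0$; otherwise the right factor $P_j$ would enter the base case in an uncontrolled way. This is where Theorem \ref{maction} is essential: it shows that $y_2^L = A_1$ acts diagonally on $m(i,j,k)$ by the scalar $-s^{2i+2} - s^{-2i-2}$, which depends only on $i$. With this independence established, the rest of the argument is a direct promotion of Theorem \ref{thm:a1decomp} from one commuting variable to two.
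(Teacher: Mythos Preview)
Your proposal is correct and is precisely what the paper means by ``the tensor square of \cref{lem:isos}'': you fix $j$, take the bicyclic vector $m(j/2,j,j/2)$, use freeness over $R[y_1^L,y_1^R]$, and reduce to the single-variable base case already verified in \cref{thm:a1decomp} by sliding $y_1^R$ past $y_2^L$ via the disjointness of the two punctured tori. The paper's own proof is a two-line pointer to exactly this argument, so there is no substantive difference in approach.
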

\begin{proof}
The proof is essentially the same as the proof of \cref{thm:a1decomp}, and it proceeds by showing
\[
V_j \cong P(q=s, t=-s^{-j-2}) \otimes_R P(q=s, t=-s^{-j-2})
\]
This follows from \cref{thm:a1decomp} and from the `tensor square' of \cref{lem:isos}.
\end{proof}

\subsection{The action of \(\Sk(\Sigma_{0,4})\) on \(\Sk (\mathcal{H}_2)\)}
In this section we study the action of the skein algebra of the 4-punctured sphere on the skein module $\Sk(\mathcal H_2)$ of the genus 2 handlebody. To shorten notation, we write
\[
V := \Sk(\mathcal H_2).
\]

From \cref{thm:presentationofsphereskein} we know that the skein algebra \(\Sk(\Sigma_{0,4})\) is generated by \(x_1, x_2, x_3, a_1, a_2, a_3, a_4\) where $x_i$ is the curve which separates the punctures $a_i,a_{i+1}$ from the punctures $a_{i+2}, a_{i+3}$ (with indices taken modulo 4). 
The element $x_3$ can be written in terms of the other generators,
so it suffices to compute the actions of \(x_1, x_2, a_1, a_2, a_3\) and \(a_4\).  These computations were done in \cref{maction}, and we recall the results here in the present notation.
For all \(i,j,k\) such that the triples \((i, i, j)\) and \((k, k, j)\) are admissible,
\begin{align}
a_1 \cdot m(i, j, k) &= a_4 \cdot m(i, j, k) = (-s^{2i+2} - s^{-2i-2}) m(i, j,k ), \label{eq:a14act}\\
a_2 \cdot m(i, j, k) &= a_3 \cdot m(i, j, k) = (-s^{2k+2} - s^{-2k-2}) m(i, j,k ), \label{eq:a23act} \\
x_2 \cdot m(i, j, k) & = (-s^{2j+2} - s^{-2j-2}) m(i, j,k ). \label{eq:x2act}
\end{align}
Finally,
\begin{alignat}{3}
    x_1 \cdot m(i,j,k)= &&{}-\bigg((s^2-s^{-2})^2 \frac{\qint{j/2}^4\qint{i+j/2+1}\qint{k + j/2+1}}{\qint{j-1}\qint{j}^2\qint{j+1}}\bigg) &m(i, j-2, k) \label{eq:x1act}\\
    &&-\bigg((s^2-s^{-2})^2\qint{i- j/2}\qint{k - j/2}\bigg)&m(i, j+2, k)\notag \\
    &&+\bigg(\big(- s^{-2(i+k+1)} - s^{2(i+k+1)}\big) + K\bigg) &m(i,j,k)\notag 
\end{alignat}
where $K$ is a certain constant.
Recall that in these formulas, if a coefficient has $[j]_s$ in the denominator, then that coefficient is equal to $0$ when $j=0$ (see \cref{rmk:coeff}).

Now define the following $R$-submodules of $V$:
\begin{equation}
    V_{i,k} := \rspan \{m(i,2j,k) \mid 0 \leq j \leq  i, k \} \label{eq:submods}
\end{equation}
We note that these $R$ modules have $R$-dimension $1 + min(i,k)$.
\begin{lem}\label{lem:s4decomp}
Over the skein algebra $\Sk(\Sigma_{0,4})$, the module $V$ has the following decomposition:
\[
V = \bigoplus_{i,k \geq 0} V_{i,k}
\]
\end{lem}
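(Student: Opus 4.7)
The plan is to prove this lemma by direct inspection of the action formulas for the generators of $\Sk(\Sigma_{0,4})$ on the dumbbell basis, which were recalled in equations \eqref{eq:a14act}--\eqref{eq:x1act} just above. There is nothing deep here: the content is that every generator of the skein algebra either acts diagonally on $m(i,j,k)$ or acts only by changing the middle index $j$, so each $V_{i,k}$ is an invariant subspace.

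First I would verify the $R$-module direct sum decomposition. The dumbbell basis $\{m(i,j,k)\}$ indexes $V$ over all triples with $(i,i,j)$ and $(k,k,j)$ admissible, which forces $j$ to be even (write $j=2\ell$) and $0 \le \ell \le \min(i,k)$. Grouping basis vectors by the outer indices $(i,k)$ gives $V = \bigoplus_{i,k \ge 0} V_{i,k}$ as $R$-modules, and each $V_{i,k}$ has $R$-dimension $1+\min(i,k)$ as claimed.

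Next I would check invariance of $V_{i,k}$ under each generator of $\Sk(\Sigma_{0,4})$. By \cref{thm:presentationofsphereskein}, $\Sk(\Sigma_{0,4})$ is generated over $R[a_1,a_2,a_3,a_4]$ by $x_1, x_2, x_3$, and the defining relation $[x_1,x_2]_{s^2} = (s^4-s^{-4})x_3 + (s^2-s^{-2})p_3$ expresses $x_3$ as a polynomial in the other generators, so it suffices to handle $x_1, x_2, a_1, a_2, a_3, a_4$. From \eqref{eq:a14act} and \eqref{eq:a23act}, the operators $a_1,a_4$ act diagonally with eigenvalues depending only on $i$, and $a_2,a_3$ act diagonally with eigenvalues depending only on $k$; in particular they preserve $V_{i,k}$. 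From \eqref{eq:x2act}, $x_2$ acts diagonally with an eigenvalue depending only on $j$, so it preserves $V_{i,k}$. Finally, from \eqref{eq:x1act}, $x_1$ sends $m(i,j,k)$ to an $R$-linear combination of $m(i,j\pm 2,k)$ and $m(i,j,k)$, which is again manifestly in $V_{i,k}$.

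The only point that deserves a short remark is the boundary behaviour in the $x_1$ formula: when $j=0$ the coefficient of $m(i,j-2,k)$ must vanish (so we stay inside the admissible range), and when $j/2 = \min(i,k)$ the coefficient of $m(i,j+2,k)$ must vanish. The first is \cref{rmk:coeff}, since the $m(i,j-2,k)$ coefficient contains $\qint{j/2}^2$ in the numerator. The second holds because the coefficient of $m(i,j+2,k)$ contains the factor $\qint{i-j/2}\qint{k-j/2}$, which vanishes exactly when $j/2 = i$ or $j/2 = k$. Thus $V_{i,k}$ is genuinely a finite-dimensional submodule, and since the generators preserve it, so does all of $\Sk(\Sigma_{0,4})$. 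Combined with the $R$-module decomposition above, this gives the claimed module decomposition.
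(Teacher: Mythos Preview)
Your proof is correct and follows essentially the same approach as the paper's own proof: establish the $R$-module decomposition by partitioning the dumbbell basis according to the outer indices $(i,k)$, then check invariance under the generators $a_1,\dots,a_4,x_1,x_2$ using the explicit formulas \eqref{eq:a14act}--\eqref{eq:x1act}. Your treatment is slightly more thorough than the paper's in that you explicitly address both boundary cases for $x_1$ (the paper only spells out the vanishing of the $m(i,j+2,k)$ coefficient at the top of the range), but this is a minor expository difference rather than a genuine divergence in method.
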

\begin{proof}
If $\mathcal A \subset \mathbb N^{\times 3}$ is the set of triples $(i,j,k)$ with $(i,i,j)$ and $(j,k,k)$ admissible, then we already know the set $\{m(i,j,k) \,\mid\, (i,j,k) \in \mathcal A\}$ is an $R$-basis for $V$. From the definition of admissibility (\cref{def:adm}), it is clear that each $m(i,j,k)$ is an element of exactly one of the $V_{i,k}$, which shows the claimed decomposition as $R$-modules. All that is left is to show that each $V_{i,k}$ is preserved under the action of $\Sk(\Sigma_{0,4})$. The $a_i$ and $x_2$ preserve this decomposition since they act diagonally on the $m(i,j,k)$, and since the skein algebra is generated by these elements and $x_1$, all that remains is to show that $x_1$ preserves this decomposition. This follows from the formula for the action of $x_1$ above, since the operator $x_1$ doesn't change $i$ or $k$, and the coefficient of $m(i,j+2,k)$ is equal to 0 whenever $(i,i,j)$ or $(j,k,k)$ is not admissible (since $[0]_q = 0$).
\end{proof}

We now identify the pieces $V_{i,k}$ as finite dimensional quotients of polynomial representations of DAHAs. More precisely, by \cref{prop:Bereset&SamuelsonIso} (which we recall below), there is a map from the skein algebra of the 4-punctured sphere to the $(C^\vee_1, C_1)$ spherical DAHA, and this algebra acts on $\mathbb C[x]$ as described in \cref{sec:polyrep}. By the computations in that section, this module is generated by $1$ and is free over the subalgebra $\mathbb C[x]$ (where $x$ acts by multiplication). We recall from \cref{lem:bc1dahaact} that we have the following identities:
\begin{align}
    y\cdot 1 &= t_1 t_3 + t_1^{-1} t_3^{-1}\label{eq:yact}\\
    yx \cdot 1 &= (q^2 t_1^{-1} t_3^{-1} + q^{-2} t_1 t_3) x + -\ti \tiv + \tiv (q^{-2} t_1 -q^2 t_1^{-1}) + (q^{-1}-q)\tii (t_3 + t_3^{-1})\label{eq:yxact}
\end{align}
Let $P_{i,k}$ be this polynomial representation with parameters specialised as follows:
\begin{equation}\label{eq:params}
q=s^2,\quad \quad t_1 = \iota s^{-2i-2},\quad \quad t_2 = \iota s^{\epsilon_2(2k+2)}, \quad \quad t_3 = \iota s^{2i},\quad \quad t_4 = \iota s^{\epsilon_4 (2k+2)}
\end{equation}
Here $\iota^2 = -1$, and $\epsilon_2$ and $\epsilon_4$ are signs which can be chosen arbitrarily (the choice does not affect the computations below). We view $P_{i,k}$ as a module over the skein algebra $\Sk(\Sigma_{0,4})$ by restriction along the algebra map \(\phi: \Sk(\Sigma_{0,4}) \to S\mathscr{H}_{q,\underline{t}}\)  described in \cref{prop:Bereset&SamuelsonIso}. 
This algebra map $\phi$ is determined by the assignments
\begin{alignat*}{3}
    \phi(x_1) &= x,&\quad& \phi(a_1) &\;=& \;\iota  \ti,\\
    \phi(x_2) &= y,&& \phi(a_2) &=&\;\iota \tii,\\
    \phi(x_3) &= z,&& \phi(a_3) &=&\;\iota  \tiv,\\
   \phi(s) &= q^2, && \phi(a_4) &=&\;\iota  (\tiii).
\end{alignat*}

Let $R' = R[a_1,a_2,a_3,a_4]$. Now we  construct the following map of $\Sk(\Sigma_{0,4})$-modules: 
\begin{equation}\label{eq:4pmap}
    \varphi: P_{i,k} \twoheadrightarrow V_{i,k}
\end{equation} 
This map is uniquely determined by the choice $1 \mapsto m(i,0,k)$ and by the requirement that it is a module map over $R[x_1]$ (since $P_{i,k}$ is freely generated by $1$ over $R[x_1]$). To see that it is a map of $R'[x_1]$-modules, we need to show that $\varphi(a_i\cdot m) = a_i\cdot \varphi(m)$ for any $m \in P_{i,k}$. The $a_i$ act as scalars in $P_{i,k}$ by definition, and we can see that they act as scalars in $V_{i,k}$ from the formulas \eqref{eq:a14act} and \eqref{eq:a23act} since these actions just depend on $i$ and $k$, and these indices are the same for all basis vectors inside $V_{i,k}$. All that remains is to show that these scalars agree, and this follows immediately from the specializations in \eqref{eq:params}.
\begin{lem}
The map $\varphi$ of equation \eqref{eq:4pmap} is a map of $\Sk(\Sigma_{0,4})$-modules.
\end{lem}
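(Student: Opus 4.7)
The plan is to invoke Lemma \ref{lem:isos} with $B = \Sk(\Sigma_{0,4})$ regarded as an algebra over $R' := R[a_1, a_2, a_3, a_4]$, with generators $(b_1, b_2, b_3) = (x_1, x_2, x_3)$, and with $M = P_{i,k}$, $m_0 = 1$, $N = V_{i,k}$, $n_0 = m(i, 0, k)$. The defining relations of \cref{thm:presentationofsphereskein} take the form required by \eqref{eq:genrels} after identifying $q$ with $s^2$, with invertible coefficient $c_{i+2} = s^4 - s^{-4}$ and $z_{i+2} = (s^2 - s^{-2}) p_{i+2} \in R'$. The preceding paragraph has already exhibited $\varphi$ as a surjection of $R'[x_1]$-modules, so what remains is to verify the two structural hypotheses of the lemma together with the base case identities of \eqref{eq:basecase}.

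The first structural hypothesis, that $P_{i,k}$ is free of rank one over $R'[x_1]$ generated by $1$, is immediate from the construction of the polynomial representation: its underlying $R$-module is $R[x]$, $x_1$ acts by multiplication by $x$, and each $a_\ell$ acts as the scalar fixed by \eqref{eq:params}. The second, cyclicity of $V_{i,k}$ over $R'[x_1]$ with generator $m(i, 0, k)$, I would verify by induction on $j$ using equation \eqref{eq:x1act}: starting from $m(i,0,k)$, the `lowering' coefficient into $m(i, j-2, k)$ vanishes at $j=0$ because its numerator contains $\qint{0}^4$, while the `raising' coefficient from $m(i, 2j, k)$ into $m(i, 2j+2, k)$ equals $-(s^2 - s^{-2})^2 \qint{i-j} \qint{k-j}$, which is nonzero throughout the admissible range $0 \leq j < \min(i, k)$. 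This triangularity shows inductively that every $m(i, 2j, k)$ lies in $R'[x_1] \cdot m(i, 0, k)$, with the degenerate cases $\min(i, k) = 0$ making $V_{i, k}$ one-dimensional and cyclicity trivial.

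The main obstacle is the pair of base case identities \eqref{eq:basecase}. The simpler one is immediate: equation \eqref{eq:x2act} gives $x_2 \cdot m(i, 0, k) = -(s^2 + s^{-2}) m(i, 0, k)$, while equation \eqref{eq:yact} under the specialisations \eqref{eq:params} yields $y \cdot 1 = t_1 t_3 + t_1^{-1} t_3^{-1} = -(s^{-2} + s^{2})$, matching. The identity for $x_2 x_1$ is the real computation. On the skein side one expands $x_2 x_1 \cdot m(i, 0, k)$ using \eqref{eq:x1act} followed by \eqref{eq:x2act}, obtaining a polynomial of degree one in $x_1 \cdot m(i, 0, k)$ with an $R'$-valued constant term. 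On the DAHA side one expands the right hand side of \eqref{eq:yxact} under \eqref{eq:params}. The leading coefficient match is clean: $q^2 t_1^{-1} t_3^{-1} + q^{-2} t_1 t_3 = -(s^6 + s^{-6})$, which is precisely the $x_2$-eigenvalue of $m(i, 2, k)$. The constant term requires expanding $-\ti \tiv + \tiv(q^{-2} t_1 - q^2 t_1^{-1}) + (q^{-1} - q) \tii (t_3 + t_3^{-1})$; under \eqref{eq:params} the barred parameters satisfy $\ti = \tiii$ and $\tii = \tiv$ (each being $\iota$ times an $s$-Laurent expression invariant under the sign ambiguity $\epsilon_2, \epsilon_4$, which is why those signs do not affect the answer). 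Matching this against the constant term from the skein side completes the verification, and \cref{lem:isos} then delivers that $\varphi$ is a map of $\Sk(\Sigma_{0,4})$-modules.
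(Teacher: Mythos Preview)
Your argument is correct and follows the same route as the paper: both invoke \cref{lem:isos} with $(b_1,b_2,b_3)=(x_1,x_2,x_3)$ and reduce to the two base-case identities, checking the $x_2\cdot 1$ match directly and the $x_2 x_1\cdot 1$ match by comparing the leading coefficient $q^2 t_1^{-1}t_3^{-1}+q^{-2}t_1 t_3 = -(s^6+s^{-6})$ with the $x_2$-eigenvalue of $m(i,2,k)$ and leaving the constant term as a Laurent-polynomial identity. If anything you are more careful than the paper in verifying the cyclicity hypothesis for $V_{i,k}$ over $R'[x_1]$, which the paper silently assumes.
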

\begin{proof}
By \cref{lem:isos}, it suffices to check the following identities:
\begin{align}
    \varphi(x_2\cdot 1) &= x_2\cdot m(i,0,k)\label{eq:need1} \\
    \varphi(x_2 x_1 \cdot 1) &= x_2 x_1 \cdot m(i,0,k)\label{eq:need2}
\end{align}
By equation \eqref{eq:yact} and the specialisations in \eqref{eq:params}, we see that
\[
x_2\cdot 1 = t_1 t_3 + t_1^{-1}t_3^{-1} = \iota^2 (s^{-2i-2}s^{2i} + s^{2i+2}s^{-2i}) = -s^2-s^{-2}
\]
Combining this with \eqref{eq:x2act} proves \eqref{eq:need1}.

Let $m_2 := m(i,2,k)$ and $m_0 := m(i,0,k)$. To prove equation \eqref{eq:need2}, we first note that equations \eqref{eq:x1act} and \eqref{eq:x2act} show the identity
\begin{align*}
x_2 x_1 m_0 &= c_2 c m_2 + c_0 d m_0\\
&= c_2( cm_2 + dm_0) + (-c_2 d + c_0 d)m_0\\
&= c_2 (x_1 m_0) + (-c_2  + c_0 )d m_0
\end{align*}
where we have used the constants $c_i := -s^{2i+2}-s^{-2i-2}$ and 
\[
c= -(s^2-s^{-2})^2 [i]_s[k]_s,\quad \quad 
d = -s^{-2i-2k-2}-s^{2i+2k+2} + (s^2-s^{-2})^2 \frac{[i]_s[k]_s}{[2]_s}
\]
Similarly, equation \eqref{eq:yxact} shows that $x_2x_1 \cdot 1 = ax_1 + b$, where
\[
a = q^2 t_1^{-1} t_3^{-1} + q^{-2} t_1 t_3,\quad  b =  -\ti \tiv + \tiv (q^{-2} t_1 -q^2 t_1^{-1}) + (q^{-1}-q)\tii (t_3 + t_3^{-1})
\]
All that remains is to show that with the specialisations of \eqref{eq:params} we have
\[
a = c_2,\quad b = (-c_2 + c_0)d
\]
and these are straightforward identities of Laurent polynomials in the parameter $s$.
\end{proof}

\begin{rmk}
Oblomkov and Stoica completely classified finite dimensional representations of the $(C^\vee_1, C_1)$ DAHA in \cite{OS09} and showed that they are all quotients of the polynomial representation (possibly with some different signs) at certain special parameter values. They also show that at these parameter values, the polynomial representation has a \emph{unique} nontrivial quotient, which leads to the following corollary.
\end{rmk}

\begin{cor}\label{cor:ccdecomp}
The skein module of the genus 2 handlebody is a direct sum of finite dimensional representations of the spherical $(C^\vee_1, C_1)$ DAHA. Each summand is the unique quotient of the polynomial representation with parameters specialised as in \eqref{eq:params}, with $i,k \geq 0$.
\end{cor}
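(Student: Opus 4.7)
The plan is to assemble three ingredients already developed in the paper and feed them into the Oblomkov–Stoica classification. Explicitly, I would first invoke \cref{lem:s4decomp} to write the skein module as an $\Sk(\Sigma_{0,4})$-module direct sum
\[
V = \bigoplus_{i,k \geq 0} V_{i,k},
\]
so that the problem reduces to identifying each finite-dimensional summand $V_{i,k}$, which has $R$-dimension $1 + \min(i,k)$.

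Next I would use the module surjection $\varphi: P_{i,k} \twoheadrightarrow V_{i,k}$ constructed in equation \eqref{eq:4pmap} and shown in the preceding lemma to be a map of $\Sk(\Sigma_{0,4})$-modules. This exhibits $V_{i,k}$ as a nontrivial finite-dimensional quotient of the polynomial representation $P_{i,k}$ of the spherical $(C^\vee_1, C_1)$ DAHA at the specialisation \eqref{eq:params}. Since $\varphi$ is surjective and $V_{i,k}$ is finite dimensional, the kernel of $\varphi$ is a proper nonzero submodule of $P_{i,k}$.

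The main work is then to check that the parameter specialisation \eqref{eq:params} lands at one of the special parameter values where Oblomkov–Stoica~\cite{OS09} classified the finite-dimensional quotients of $P_{i,k}$. This is a direct plug-in check against the conditions in \cite{OS09}: substituting $q = s^2$, $t_1 = \iota s^{-2i-2}$, $t_3 = \iota s^{2i}$, and the chosen signs for $t_2, t_4$, one verifies the product relations among the $t_j$ and $q$ that force existence of a nontrivial finite-dimensional quotient, and one checks that the expected dimension matches $1 + \min(i,k)$. I expect this to be the main (though essentially routine) technical obstacle, as the Oblomkov–Stoica parametrisation is stated in conventions differing from ours and so the identification of parameters requires care.

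Once that verification is complete, the uniqueness half of the Oblomkov–Stoica theorem says that at these parameter values $P_{i,k}$ has a \emph{unique} nontrivial proper quotient. Therefore $\ker \varphi$ must coincide with the unique maximal submodule, and $V_{i,k}$ is isomorphic to this distinguished finite-dimensional quotient. Summing over $i,k \geq 0$ via the decomposition above yields the claimed description of $\Sk(\mathcal{H}_2)$.
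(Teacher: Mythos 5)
Your proposal is correct and follows essentially the same route as the paper: the decomposition of \cref{lem:s4decomp}, the surjection $\varphi$ of \eqref{eq:4pmap} (already shown to be a map of $\Sk(\Sigma_{0,4})$-modules), and the Oblomkov--Stoica uniqueness statement quoted in the remark preceding the corollary. The only difference is your proposed explicit verification of the Oblomkov--Stoica parameter conditions and dimension count, which the paper omits since the very existence of the nonzero finite-dimensional quotient $V_{i,k}$ already places the specialisation \eqref{eq:params} among the special parameter values to which the uniqueness statement applies.
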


\subsection{Leonard pairs}
%This subsection is motivated by a question of Terwilliger to the second author several years ago about Leonard pairs. 
Several years ago Terwilliger asked the second author whether Leonard pairs appeared in skein theory, and in this section we show that the answer is yes.

\begin{defn}
A matrix with entries $a_{ij}$ is \emph{irreducible tridiagonal} when the only nonzero entries are $a_{i j}$ with $\lvert i - j \rvert \leq 1$, and  $a_{i j} \not= 0$ whenever $\lvert i - j \rvert = 1$.
\end{defn}

\begin{defn}
A \emph{Leonard pair} is a finite dimensional vector space $V$ equipped with endomorphisms $A$ and $B$ which satisfy the following conditions:
\begin{enumerate}
    \item There exists an $A$-eigenbasis of $V$ for which the matrix representing $B$ is irreducible tridiagonal.
    \item There exists a $B$-eigenbasis of $V$ for which the matrix representing $A$ is irreducible tridiagonal.
\end{enumerate}
\end{defn}

Recall from \eqref{eq:submods} that $V_{i, k}$ is a finite-dimensional subspace of the skein module of the handlebody, and it is a submodule over the skein algebra $\Sk(\Sigma_{0,4})$. In particular, $x_1$ and $x_2$ act on $V_{i,k}$.

\begin{cor}\label{cor:leonard}
The vector space $V_{i, k}$ with the endomorphisms $x_1$ and $x_2$ is a Leonard pair.
\end{cor}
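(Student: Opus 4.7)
The plan is to verify the two defining conditions of a Leonard pair separately; Condition 1 is immediate from the explicit formulas, while Condition 2 is most cleanly established by invoking the DAHA polynomial representation.

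For Condition 1, I would use the dumbbell basis $\{m(i,2\ell,k) : 0 \le \ell \le \min(i,k)\}$ of $V_{i,k}$. By \eqref{eq:x2act}, $x_2$ acts diagonally in this basis with eigenvalues $-s^{4\ell+2}-s^{-4\ell-2}$, which are pairwise distinct over $R = \mathbb Q(s)$, so this is an $x_2$-eigenbasis. Formula \eqref{eq:x1act} shows $x_1$ sends $m(i,2\ell,k)$ to a combination of $m(i,2\ell\pm 2,k)$ and $m(i,2\ell,k)$, so the matrix of $x_1$ in this basis is tridiagonal. For irreducibility I need to verify nonvanishing of the off-diagonal entries at each interior index $\ell$. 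The super-diagonal entry is $-(s^2-s^{-2})^2 \qint{i-\ell}\qint{k-\ell}$, which is nonzero for $0 \le \ell < \min(i,k)$, and the sub-diagonal entry contains the factor $\qint{\ell}^4 \qint{i+\ell+1}\qint{k+\ell+1}$, nonzero for $1 \le \ell \le \min(i,k)$. Hence $x_1$ is irreducible tridiagonal in the $m$-basis.

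For Condition 2, I need an $x_1$-eigenbasis in which $x_2$ is irreducible tridiagonal. My plan is to transport this statement to the DAHA side. By \cref{cor:ccdecomp}, $V_{i,k}$ is isomorphic (as a module over $\Sk(\Sigma_{0,4})$) to the finite-dimensional quotient of the polynomial representation $P_{i,k}$ of the $(C^\vee_1,C_1)$ spherical DAHA at the parameters \eqref{eq:params}, and under the map $\phi$ of \cref{prop:Bereset&SamuelsonIso}, $x_1$ and $x_2$ are carried to the DAHA generators $x$ and $y$. In the polynomial representation, $x$ acts by multiplication by $X+X^{-1}$ and $y$ by the Askey--Wilson $q$-difference operator; in the truncated quotient, the Askey--Wilson polynomials (at the relevant finite Leonard spectrum) form a basis simultaneously diagonalizing $y$, and multiplication by $x$ is tridiagonal in this basis (this is the three-term recurrence already used for Condition 1 on the skein side). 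The dual statement, which is the content of Leonard's theorem for the Askey--Wilson scheme, provides a basis diagonalizing $x$ in which $y$ is tridiagonal with nonzero off-diagonal entries at the parameters \eqref{eq:params} (see \cite{NT17}). Pulling this basis back through $\varphi$ in \eqref{eq:4pmap} yields the required $x_1$-eigenbasis of $V_{i,k}$ in which $x_2$ is irreducible tridiagonal.

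The main obstacle is Condition 2, specifically the verification that the off-diagonal entries of $y$ do not vanish at the spectral parameters \eqref{eq:params}; a self-contained alternative would avoid the reference to Askey--Wilson theory by first deriving the Askey--Wilson (tridiagonal) relations for the pair $(x_1,x_2)$ directly from \cref{thm:presentationofsphereskein} (eliminating $x_3$ using the first commutator relation and exploiting centrality of the peripheral loops $a_i$), then observing that $x_1$ has multiplicity-free spectrum on $V_{i,k}$ by a dimension count against its minimal polynomial, and finally applying the standard result that a pair of operators satisfying Askey--Wilson relations with multiplicity-free spectra on a finite-dimensional irreducible module constitutes a Leonard pair (\cite{Ter03}). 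Irreducibility of $V_{i,k}$ as a $\Sk(\Sigma_{0,4})$-module, needed for this alternative, follows from an argument in the spirit of \cref{cor:irreducible} applied to the finite-dimensional piece.
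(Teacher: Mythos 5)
Your verification of Condition~1 is correct and complete: in the basis $\{m(i,2\ell,k)\}$ of $V_{i,k}$, equation \eqref{eq:x2act} gives distinct $x_2$-eigenvalues and equation \eqref{eq:x1act} gives an irreducible tridiagonal matrix for $x_1$, with the nonvanishing checks you state. The gap is in Condition~2, and it is genuine. Your main route transports the problem to the finite quotient of the $(C^\vee_1,C_1)$ polynomial representation and then quotes ``Leonard's theorem for the Askey--Wilson scheme'' (via \cite{NT17}) to produce an $x$-eigenbasis in which $y$ is irreducible tridiagonal with nonzero off-diagonal entries \emph{at the parameters \eqref{eq:params}}. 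But that is precisely the nontrivial content of the corollary: the specialisation \eqref{eq:params} is degenerate from the Askey--Wilson point of view (for instance $t_1t_3=-q^{-1}$), so neither the existence of the dual eigenbasis (i.e.\ diagonalizability of $x$ on the quotient) nor the nonvanishing of the dual off-diagonal entries may be cited as a generic fact; you flag this yourself as ``the main obstacle'' and never close it. Your fallback has the same hole in a different spot: the claim that ``$x_1$ has multiplicity-free spectrum on $V_{i,k}$ by a dimension count against its minimal polynomial'' is not substantiated. Irreducible tridiagonality only forces geometric multiplicity one, not diagonalizability (e.g.\ $\left(\begin{smallmatrix}0&1\\-1&2\end{smallmatrix}\right)$ is irreducible tridiagonal with a single Jordan block), so you do not yet know that an $x_1$-eigenbasis of $V_{i,k}$ exists, let alone one in which $x_2$ is irreducible tridiagonal.

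For comparison, the paper closes exactly these two points by algebra rather than by special-function theory: the Bullock--Przytycki presentation gives a map from the universal Askey--Wilson algebra $\Delta_q$ to $\Sk(\Sigma_{0,4})$ sending the generators to $x_1,x_2$; by \cite{Ter18} the $q$-Onsager algebra maps to $\Delta_q$, so $V_{i,k}$ is a finite-dimensional $\mathcal O_q$-module, irreducible by \cref{lemma:vikirr}; then \cite[Thm.~3.10]{Ter01} shows $(x_1,x_2)$ acts as a tridiagonal pair (which includes diagonalizability of both operators), and since the eigenspaces are $1$-dimensional, \cite[Lem.~2.2]{Ter01} upgrades this to a Leonard pair. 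If you prefer to keep your computational framing, the diagonalizability of $x_1$ on $V_{i,k}$ can be had for free from \cref{naction}: $x_1=A_2$ acts diagonally in the theta basis of $\Sk(\mathcal H_2)$, hence is diagonalizable on the invariant subspace $V_{i,k}$, and its spectrum there is simple by your tridiagonal-shape argument. Even so, Condition~2 (irreducible tridiagonality of $x_2$ in that eigenbasis) still needs either the tridiagonal-pair machinery above or a direct diagrammatic computation of the $x_2$-action in the $n(i,j,k)$ basis, as the paper's remark after \cref{cor:leonard} points out.
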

\begin{proof}
First, by \cref{thm:presentationofsphereskein} there is a map from Terwilliger's universal Askey-Wilson algebra $\Delta_q$ to $\Sk(\Sigma_{0,4})$ which takes the generators $A,B$ to $x_1,x_2$. By \cite{Ter18}, there is a map from the $q$-Onsager algebra $\mathcal O_q$ to $\Delta_q$, and hence to $\Sk(\Sigma_{0,4})$. This implies that $V_{i, k}$ is a module over $\mathcal O_q$, and by \cref{lemma:vikirr}, it is irreducible over $\mathcal O_q$. 

The $q$-Onsager algebra is a specialisation of the tridiagonal algebra of \cite{Ter01}. Since $V_{i,k}$ is irreducible and finite dimensional,  \cite[Thm. 3.10]{Ter01} implies the action of $x_1$ and $x_2$ on $V_{i,k}$ give a tridiagonal pair. Since we know $x_1$ and $x_2$ have 1-dimensional eigenspaces, \cite[Lem. 2.2]{Ter01} implies $x_1,x_2$ is a Leonard pair on $V_{i,k}$.
\end{proof}

\begin{rmk}
Instead of the proof above, we could compute the action of $x_2$ on the $n(i,j,k)$ basis to check that it is tridiagonal. This would be similar in difficulty to the computation of the coefficients of the $x_1$ action in the $m(i,j,k)$ basis.
\end{rmk}

Using our explicit formulas for the action of $x_1$ and $x_2$, we show the following.
\begin{lem}\label{lemma:vikirr}
With the base ring $\mathbb Q(s)$, the module $V_{i,k}$ is irreducible as a module over the subalgebra of $\Sk(\Sigma_{0,4})$ generated by $x_1$ and $x_2$.
\end{lem}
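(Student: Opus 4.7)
The plan is to prove irreducibility by the standard argument for modules carrying a diagonalizable operator with simple spectrum together with a tridiagonal operator whose off-diagonal entries do not vanish. The two operators here will be $x_2$ and $x_1$ in the basis $\{m(i,2j,k)\}_{j=0}^{\min(i,k)}$ of $V_{i,k}$.

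First I would check that the $x_2$-eigenvalues on $V_{i,k}$ are pairwise distinct in $\mathbb{Q}(s)$. From \eqref{eq:x2act}, these eigenvalues are $\lambda_j := -s^{4j+2} - s^{-4j-2}$ for $0 \le j \le \min(i,k)$. An equality $\lambda_j = \lambda_{j'}$ forces either $j = j'$ or $s^{4(j+j')+4} = 1$, and the latter is impossible in $\mathbb{Q}(s)$ since $j + j' \ge 0$. Hence the $x_2$-eigenspaces in $V_{i,k}$ are all one-dimensional, and by Lagrange interpolation in $x_2$, for any $v = \sum_j c_j\, m(i,2j,k)$ with some $c_{j_0} \neq 0$, an appropriate polynomial $p(x_2)$ extracts $m(i,2j_0,k)$ from $v$.

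Next I would read off from \eqref{eq:x1act} that $x_1$ acts as a tridiagonal matrix in this basis whose super- and subdiagonal entries are nonzero whenever they should be. Indeed, the coefficient of $m(i,j-2,k)$ contains the factor $[j/2]_s^4[i+j/2+1]_s[k+j/2+1]_s$, which is nonzero precisely when $j > 0$; symmetrically, the coefficient of $m(i,j+2,k)$ contains the factor $[i-j/2]_s[k-j/2]_s$, which is nonzero precisely when $j < 2\min(i,k)$. Thus $x_1$ is irreducibly tridiagonal on $V_{i,k}$.

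Given these two facts, irreducibility follows by a short induction. If $W \subseteq V_{i,k}$ is a nonzero submodule over the subalgebra generated by $x_1, x_2$, pick $0 \neq v \in W$; by the interpolation step some basis vector $m(i,2j_0,k)$ lies in $W$. Applying $x_1$ yields a vector of the form $\alpha_-\, m(i,2(j_0-1),k) + \beta\, m(i,2j_0,k) + \alpha_+\, m(i,2(j_0+1),k) \in W$ with $\alpha_\pm \neq 0$ in the interior range, and subtracting $\beta\, m(i,2j_0,k)$ and then isolating the one-dimensional $x_2$-eigenspaces gives $m(i,2(j_0\pm 1),k) \in W$. Iterating produces every basis vector, so $W = V_{i,k}$. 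There is no real obstacle: the only thing to verify is the distinctness of the $\lambda_j$ and the nonvanishing of the off-diagonal coefficients of $x_1$, both of which are immediate from the explicit formulas of Section~\ref{sec:calculations}.
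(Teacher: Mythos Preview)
Your argument is correct and follows essentially the same strategy as the paper's proof: both use that $x_2$ is diagonalizable with one-dimensional eigenspaces (so any nonzero submodule contains some $m(i,2j_0,k)$), and that the off-diagonal entries of the tridiagonal operator $x_1$ are nonzero on the admissible range, allowing one to reach every basis vector. Your write-up is slightly more explicit about verifying the distinctness of the $x_2$-eigenvalues and the nonvanishing of the $x_1$ off-diagonal coefficients, but the underlying idea is identical.
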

\begin{proof}
Write $A$ for the subalgebra of the skein algebra generated by $x_1$ and $x_2$. We would like to show that if $x \in V_{i,k}$, then $A\cdot x = V_{i,k}$. Equation \eqref{eq:x2act} shows that $x_2$ acts on $V_{i,k}$ with 1-dimensional eigenspaces. This implies that it is sufficient to show the following implication: for any $j$, and for each $j'$ with $(i,i,j')$ and $(j',k,k)$ admissible, the subspace $A\cdot m(i,j,k)$ contains an element $y$ whose $m(i,j,k)$-expansion has a nonzero coefficient for $m(i,j',k)$. Then equation \eqref{eq:x1act} shows that we may take $y = x_1^{\lvert j - j'\rvert} \cdot m(i,j,k)$.
\end{proof} 
\section{The Arthamonov-Shakirov Genus Two  DAHA}
\label{ASlink}
In \cite{arthamonov&shakirov2017}, Arthamonov and Shakirov propose a definition for a genus $2$, type $A_1$, spherical DAHA. 
In this section we show that when $q=t$ this algebra is isomorphic to the skein algebra $\Sk(\Sigma_{2,0})$ (see \cref{thm:AScompare}).

The \(A_1\) spherical DAHA admits a  representation on $\mathbb{Q}(q,t)[X^{\pm 1}]$ as the algebra generated by two operators
\[\hat{\mathcal{O}}_B := X + X^{-1}, \quad \hat{\mathcal{O}}_A := \frac{X^{-1} - tX}{X^{-1} - X} \hat{\delta}  + \frac{X - t X^{-1}}{X - X^{-1}} \hat{\delta}^{-1}\]
where \(\hat{\delta} f(X) := f(q^{1/2} X)\) is the shift operator; these two operators are associated with the A- and B-cycles of the torus. In order to generalise the $A_1$ spherical DAHA to genus $2$, Arthamonov and Shakirov generalise this representation: they use six operators 
\( \hat{\mathcal{O}}_{B_{1 2}}\), 
\(\hat{\mathcal{O}}_{B_{1 3}}\), 
\(\hat{\mathcal{O}}_{B_{2 3}}\), 
\(\hat{\mathcal{O}}_{A_1}\), 
\(\hat{\mathcal{O}}_{A_2}\), 
\(\hat{\mathcal{O}}_{A_3}\), 
which relate to the six cycles shown in  \cref{fig:cycles}, and these operators act on \(\mathcal{H}\), the space of Laurent polynomials in variables \(x_{12}\), \(x_{13}\), \(x_{23}\) which are symmetric under the \(\mathbb{Z}^3_2\) group of Weyl inversions \((x_{12}, x_{13}, x_{23})  \mapsto  (x_{12}^u, x_{13}^v, x_{23}^w)\) where \(\quad u, v, w \in \{\pm 1\}\).
This space \(\mathcal{H}\) has a basis given by a family of Laurent polynomials \(\{\Psi_{i, j, k}\}\) where \((i, j, k)\) are admissible triples \cite[Def. 2]{arthamonov&shakirov2017}.
We shall show that, when \(q = t = s^4\), the algebra of actions of $\Sk(\Sigma_{2,0})$ on \(\Sk(\mathcal{H})\) is isomorphic to this algebra of six operators via the mapping 
\[
    \lp \mapsto - \hat{\mathcal{O}}_{\lp} \text{ and } n(i, j, k) \mapsto \alpha^{-1}(i,j,k) \Psi_{i, j, k}
\]
where \(\alpha: \Ad \to \mathbb{Q}(q^{\pm\frac{1}{2}})\) is a change of basis map which we define in \cref{defn:alpha} and give a closed form for in \cref{prop:alpha_non_recursive}.

We shall first recall the necessary definitions from \cite{arthamonov&shakirov2017}.
\begin{defn}{}
    For any admissible triple \((i,j,k) \in \Ad\) and \(a,b \in \{-1,1\}\), the \emph{Arthamonov and Shakirov coefficients} are
    \[
    C_{a,b} (i,j,k) = a b \frac{\left[\frac{ai + bj + k}{2}, \frac{a+b+2}{2}\right]_{q,t} \left[\frac{ai + bj - k}{2}, \frac{a+b}{2}\right]_{q,t}  [i-1, 2]_{q,t}[j-1, 2]_{q,t}}{ \left[i, \frac{a+3}{2}\right]_{q,t} \left[i-1, \frac{a+3}{2}\right]_{q,t} \left[j, \frac{b+3}{2}\right]_{q,t} \left[j-1, \frac{b+3}{2}\right]_{q,t} }
    \]
    where
    \[
    [n,m]_{q,t} := \frac{q^{\frac{n}{2}} t^{\frac{m}{2}} - q^{-\frac{n}{2}} t^{-\frac{m}{2}} }{ q^{\frac{1}{2}} - q^{-\frac{1}{2}}}.
    \]
\end{defn}

\begin{defn}
    Given a loop in $\{A_1, A_2, A_3, B_{1 2}, B_{2 3}, B_{1 3} \}$, the operator $\hat{\mathcal{O}}_{\lp}$ acts on $\Psi_{i,j,k}$ as follows:
    \begin{alignat*}{5}
         \hat{\mathcal{O}}_{B_{1 2}}\Psi_{i,j,k} &= \sum_{a,b \in \{-1,1\}} C_{a,b}(i,j,k) \Psi_{i+a, j+b, k} 
         &\qquad& \hat{\mathcal{O}}_{A_{1}} \Psi_{i, j, k} &= \Bigl( q^{i/2} t^{1/2} + q^{-i/2} t^{-1/2} \Bigr) \Psi_{i, j, k} \\
        \hat{\mathcal{O}}_{B_{1 3}}\Psi_{i,j,k} &= \sum_{a,b \in \{-1,1\}} C_{a,b}(i,k,j) \Psi_{i+a, j, k+b}
        &&\hat{\mathcal{O}}_{A_{2}} \Psi_{i, j, k} &= \Bigl(q^{j/2} t^{1/2} + q^{-j/2} t^{-1/2}\Bigr) \Psi_{i, j, k} \\
        \hat{\mathcal{O}}_{B_{2 3}}\Psi_{i,j,k} &= \sum_{a,b \in \{-1,1\}} C_{a,b}(j,k,i) \Psi_{i, j+a, k+b}
        && \hat{\mathcal{O}}_{A_{3}} \Psi_{i, j, k} &= \Bigl(q^{k/2} t^{1/2} + q^{-k/2} t^{-1/2} \Bigr) \Psi_{i, j, k}
        \end{alignat*}
    where \(\Psi_{0,0,0} = 1\) and \(\Psi_{i,j,k} := 0\) if \((i,j,k)\) is not an admissible triple. 
    The \emph{Arthamonov--Shakirov, genus $2$, spherical DAHA} is the subalgebra of endomorphism ring of $\mathcal{H}$ generated by these operators. 
\end{defn}

From now on we shall specialise to $q = t = s^4$ so that \([n,m]_{q,t} = \qint{n+m}\) (this makes sense because none of the structure constants above have poles at $t=q$).
This leads to a simplification of the Arthamonov and Shakirov coefficients \(C_{1,1} (i,j,k)\).
\begin{lem}
    \label{C relations}
    For all \((i,j,k) \in \Ad\) we have that
    \begin{equation}
    \label{eq:C11}
    C_{1,1} (i,j,k) = \frac{\qint{\frac{i + j + k + 4}{2}} \qint{\frac{i + j - k + 2}{2}}}{ \qint{i+2}  \qint{j+2}}
    \end{equation}
    and the relations:
    \begin{align}
        C_{1,1} (i,j,k) &= C_{1,1} (j,i,k) \label{ijsymmetric}\\
        C_{1,1}(k,j+1,i+1) C_{1,1} (i,j,k) &= C_{1,1} (i, j+1, k+1) C_{1,1} (k,j,i) \label{path relation}
    \end{align}
\end{lem}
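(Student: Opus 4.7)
The plan is to prove all three claims by direct calculation from the definition, after first observing that the specialisation $q=t=s^4$ gives $[n,m]_{q,t} = \qint{n+m}$ since
\[
[n,m]_{s^4,s^4} = \frac{s^{2n+2m} - s^{-2n-2m}}{s^2 - s^{-2}} = \qint{n+m}.
\]

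For equation \eqref{eq:C11}, I would substitute $a=b=1$ into the definition of $C_{a,b}(i,j,k)$, noting that this makes $\frac{a+b+2}{2}=2$, $\frac{a+b}{2}=1$, and $\frac{a+3}{2}=\frac{b+3}{2}=2$. The factors $[i-1,2]_{q,t}$ and $[j-1,2]_{q,t}$ in the numerator cancel with $[i-1,2]_{q,t}$ and $[j-1,2]_{q,t}$ in the denominator, leaving
\[
C_{1,1}(i,j,k) = \frac{\left[\tfrac{i+j+k}{2},2\right]_{q,t}\,\left[\tfrac{i+j-k}{2},1\right]_{q,t}}{[i,2]_{q,t}\,[j,2]_{q,t}}.
\]
Applying the specialisation identity $[n,m]_{q,t} = \qint{n+m}$ to each factor yields \eqref{eq:C11}.

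Relation \eqref{ijsymmetric} is then immediate: the closed-form expression in \eqref{eq:C11} is manifestly symmetric under swapping $i$ and $j$, since $\frac{i+j+k+4}{2}$ and $\frac{i+j-k+2}{2}$ are both symmetric in $i,j$, and the denominator $\qint{i+2}\qint{j+2}$ is as well.

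For the path relation \eqref{path relation}, I would simply expand both sides via \eqref{eq:C11} and check that they agree as rational functions. Explicitly, applying the formula with the appropriate shifts gives
\[
C_{1,1}(k,j+1,i+1) C_{1,1}(i,j,k) = \frac{\qint{\tfrac{i+j+k+6}{2}}\qint{\tfrac{j+k-i+2}{2}}\qint{\tfrac{i+j+k+4}{2}}\qint{\tfrac{i+j-k+2}{2}}}{\qint{k+2}\qint{j+3}\qint{i+2}\qint{j+2}}
\]
and
\[
C_{1,1}(i,j+1,k+1) C_{1,1}(k,j,i) = \frac{\qint{\tfrac{i+j+k+6}{2}}\qint{\tfrac{i+j-k+2}{2}}\qint{\tfrac{i+j+k+4}{2}}\qint{\tfrac{j+k-i+2}{2}}}{\qint{i+2}\qint{j+3}\qint{k+2}\qint{j+2}}.
\]
The numerators agree after reordering factors, as do the denominators, so the identity holds.

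There is no real obstacle here; the only point requiring care is bookkeeping the index shifts $(i,j,k) \mapsto (k,j+1,i+1)$ and $(i,j,k) \mapsto (i,j+1,k+1)$ in the arguments $\frac{\cdot + \cdot \pm \cdot + \cdot}{2}$ of the quantum integers. Everything else is purely mechanical substitution.
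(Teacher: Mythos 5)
Your proposal is correct and follows essentially the same route as the paper: substitute $a=b=1$ into the definition (cancelling the $[i-1,2]_{q,t}$ and $[j-1,2]_{q,t}$ factors and using $[n,m]_{s^4,s^4}=\qint{n+m}$) to get \eqref{eq:C11}, read off the $i\leftrightarrow j$ symmetry, and verify \eqref{path relation} by substituting the closed form into both sides. Your version is simply more explicit about the index bookkeeping than the paper's one-line argument.
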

\begin{proof}
Substituting $a=b=1$ into the definition of $C_{a,b}$ gives \eqref{eq:C11}. As this is symmetric in $i$ and $j$ we have \eqref{ijsymmetric} and by substituting \eqref{eq:C11} for each term in \eqref{path relation} we see that this relation holds.
\end{proof}

We also note that any admissible triple can be written in the following form:
\begin{lem}
\label{lem:adform}
A triple is admissible if and only if it is of the form \((x+d, y+d, x+y)\) for some \(x, y, d \geq 0\).
\begin{proof}
The triple \((x+d, y+d, x+y)\) is admissible if $x,y,d \geq 0$. Conversely, if the triple \((a,b,c)\) is admissible, then let $2x = a+c-b$, $2y = b+c-a$, and $2d = a+b-c$. Then $x,y,d \geq 0$ by the admissibility of the triple \(a,b,c\), and \((a,b,c) = (x+d, y+d, x+y)\) by construction.
%As \(c \leq a+b\) there exists an \(\epsilon \geq 0\) such that \(a + b = c + \epsilon\).
%As \(a + b + c = 2c + \epsilon\) is even, \(\epsilon\) is even and thus \(\epsilon = 2 d\) for some \(d \geq 0\).
%We also have that \(c \geq |a-b| \implies a + b - 2 d \geq |a-b|\):
%if \(a \geq b\) then this implies that \(a + b - 2 d \geq a-b \implies 2b \geq 2 d\) and hence \(a \geq b \geq d\);
%and if on the other hand \(b \geq a\) then we have that \(a + b - 2 d \geq b-a \implies 2a \geq 2 d\) and hence \(b \geq a \geq d\).
%Define \(x:= a - d\) and \(y := b - d\) as \(a,b \geq d\) these are both positive, and \((a,b,c) = (x+d, y+d, x+y)\), so the triple is of the required form.
\end{proof}
\end{lem}

Therefore, in order to define the change of basis map \(\alpha: \Ad \to \mathbb{Q}[s^2, s^{-2}]\), which relates the theta basis of the skein module $\Sk(\mathcal{H}_2)$ to the basis of $\mathcal{H}$ in \cref{thm:AScompare}, it is sufficient to define \(\alpha(x+d, y+d, x+y)\) for $x, y, d \geq 0$.
\begin{defn}
\label{defn:alpha}
Define \(\alpha: \Ad \to \mathbb{Q}[s^2, s^{-2}]\) recursively as follows:
\begin{align}
    \alpha(0,0,0) &= 1 \label{base}\\
    \alpha(i+1, j+1, k) & = - \frac{\alpha(i,j,k)}{C_{1,1}(i,j,k)}; \label{rule1}\\
    \alpha(i+1, j, k+1) & = - \frac{\alpha(i,j,k)}{C_{1,1}(i,k,j)}; \label{rule2}\\
    \alpha(i, j+1, k+1) & = - \frac{\alpha(i,j,k)}{C_{1,1}(j,k,i)}. \label{rule3}
\end{align}
\end{defn}

\begin{lem}
The function \(\alpha: \Ad \to \mathbb{Q}[s^2, s^{-2}]\) is well defined.
\begin{proof}
In order to prove that \(\alpha(i,j,k)\) is unambiguously defined it is sufficient to prove that if \(\alpha(i,j,k)\) is unambiguously defined then \(\alpha(i+1, j+2, k+1)\), \(\alpha(i+2, j+1, k+1)\) and \(\alpha(i+1, j+1, k+2)\) are unambiguously defined.
\begin{align*}
    \alpha(i+1, j+2, k+1)
        &= - \frac{\alpha(i,j+1,k+1)}{C_{1,1}(i,j+1,k+1)} \text{ using \eqref{rule1} first}\\
        &= \frac{\alpha(i,j,k)}{ C_{1,1}(i,j+1,k+1) C_{1,1}(j,k,i)} \\
    \alpha(i+1, j+2, k+1)
        &= - \frac{\alpha(i+1,j+1,k)}{C_{1,1}(j+1,k,i+1)} \text{ using \eqref{rule3} first}\\
        &= \frac{\alpha(i,j,k)}{C_{1,1}(j+1,k,i+1) C_{1,1}(i,j,k)} \\
        &= \frac{\alpha(i,j,k)}{C_{1,1}(k,j+1,i+1) C_{1,1}(i,j,k)} \text{ by  \cref{C relations} \eqref{ijsymmetric}} \\
        &= \frac{\alpha(i,j,k)}{C_{1,1}(i,j+1,k+1) C_{1,1}(k,j,i)} \text{ by \cref{C relations} \eqref{path relation}} \\
        &= \frac{\alpha(i,j,k)}{C_{1,1}(i,j+1,k+1) C_{1,1}(j,k,i)} \text{ by \cref{C relations} \eqref{ijsymmetric}}
\end{align*}
these give the same result.
Checking \(\alpha(i+2, j+1, k+1)\) and \(\alpha(i+1, j+1, k+2)\) are unambiguously defined is analogous.
\end{proof}
\end{lem}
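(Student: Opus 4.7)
The plan is to prove well-definedness by a local-confluence (diamond lemma) argument, inducting on the total $i+j+k$. By \cref{lem:adform}, every admissible triple can be written as $(x+d, y+d, x+y)$ with $x, y, d \geq 0$, so every admissible triple is reachable from $(0,0,0)$ by applying \eqref{rule1} a total of $d$ times, \eqref{rule2} a total of $x$ times, and \eqref{rule3} a total of $y$ times in some order. Hence $\alpha$ takes at least one value everywhere; the question is whether that value is independent of the order of the rule applications.

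Since the three rule-direction vectors are $(1,1,0)$, $(1,0,1)$, and $(0,1,1)$, any two rule sequences producing the same output from the same input differ by a sequence of adjacent swaps. It therefore suffices to verify the three elementary diamonds: for each pair of rules, show that applying them in either order from a triple $(i,j,k)$ yields the same value of $\alpha$. The three corresponding target triples are $(i+1, j+2, k+1)$, $(i+2, j+1, k+1)$, and $(i+1, j+1, k+2)$, and in each case there are exactly two length-two derivations from $(i,j,k)$ to check.

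For the first diamond, applying rule \eqref{rule1} first and then \eqref{rule3} gives $\alpha(i+1, j+2, k+1) = \alpha(i,j,k)/\bigl(C_{1,1}(i, j+1, k+1)\, C_{1,1}(j,k,i)\bigr)$, while the reverse order gives $\alpha(i,j,k)/\bigl(C_{1,1}(j+1, k, i+1)\, C_{1,1}(i,j,k)\bigr)$. The two minus signs from the rules contribute $(-1)^2 = 1$ to each product and cancel uniformly. The two expressions then agree precisely when $C_{1,1}(i, j+1, k+1)\, C_{1,1}(j,k,i) = C_{1,1}(j+1, k, i+1)\, C_{1,1}(i,j,k)$, which is the path relation \eqref{path relation} of \cref{C relations} after using the $i\leftrightarrow j$ symmetry \eqref{ijsymmetric} on both sides. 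The diamonds for $(i+2, j+1, k+1)$ and $(i+1, j+1, k+2)$ reduce to the same identity after permuting the roles of the three coordinates.

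The main obstacle, such as it is, is purely bookkeeping: the denominators $C_{1,1}(i,j,k)$, $C_{1,1}(i,k,j)$, $C_{1,1}(j,k,i)$ appearing in \eqref{rule1}--\eqref{rule3} each single out a different coordinate as the ``last,'' so for each diamond one must locate the correct permutation of \eqref{ijsymmetric} and \eqref{path relation} to apply. Once that matching is done, every diamond collapses to the same identity of \cref{C relations} and well-definedness follows by the induction.
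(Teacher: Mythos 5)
Your proposal is correct and follows essentially the same route as the paper: both reduce well-definedness to the three local diamonds at $(i+1,j+2,k+1)$, $(i+2,j+1,k+1)$, $(i+1,j+1,k+2)$, and resolve the first by combining the $i\leftrightarrow j$ symmetry \eqref{ijsymmetric} with the path relation \eqref{path relation}, treating the other two as analogous. Your explicit remark that any two rule sequences differ by adjacent swaps just spells out the reduction the paper takes for granted.
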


We shall now find a non-recursive formula for \(\alpha\).
\begin{lem}
\label{x only}
For all \(x\geq 0\), \(\alpha(x, 0, x) = (-1)^{x} \qint{x+1}.\)
\begin{proof}
We proceed by induction on $x$: for $x = 0$ the result holds by the definition of $\alpha$ and  
\[\alpha(x+1,0,x+1) = - \frac{\alpha(x,0,x)}{C_{1,1}(x,x,0)} = (-1)^{x+1} \frac{\qint{x+1}\qint{x+2}}{\qint{x+1}} = (-1)^{x+1} \qint{x+2}\]
by the induction assumption and \eqref{eq:C11}.
\end{proof}
\end{lem}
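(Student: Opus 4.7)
The plan is to proceed by induction on $x$, using the recursive definition of $\alpha$ together with the closed form for $C_{1,1}$ given in \eqref{eq:C11}. The base case $x = 0$ is immediate from \eqref{base} and the fact that $\qint{1} = 1$.

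For the inductive step, assume that $\alpha(x,0,x) = (-1)^x \qint{x+1}$. I would apply rule \eqref{rule2} with the substitution $(i,j,k) = (x,0,x)$, which gives
\[
\alpha(x+1, 0, x+1) = -\frac{\alpha(x, 0, x)}{C_{1,1}(x, x, 0)}.
\]
Then I would use \eqref{eq:C11} from \cref{C relations} to compute $C_{1,1}(x,x,0)$. Plugging in $i = x$, $j = x$, $k = 0$ yields
\[
C_{1,1}(x,x,0) = \frac{\qint{x+2}\qint{x+1}}{\qint{x+2}\qint{x+2}} = \frac{\qint{x+1}}{\qint{x+2}}.
\]
Substituting this and the inductive hypothesis gives
\[
\alpha(x+1, 0, x+1) = -(-1)^x \qint{x+1} \cdot \frac{\qint{x+2}}{\qint{x+1}} = (-1)^{x+1} \qint{x+2},
\]
as required.

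There is no serious obstacle here; the entire argument is a single application of the recursion followed by a cancellation. The only thing to be mindful of is choosing the correct one of the three recursion rules \eqref{rule1}, \eqref{rule2}, \eqref{rule3} — since we are simultaneously increasing the first and third arguments of $\alpha$ while leaving the middle one fixed, rule \eqref{rule2} is the appropriate one. Well-definedness of $\alpha$ (already established in the preceding lemma) ensures that any alternative route to $(x+1,0,x+1)$ would produce the same value, so no compatibility check is needed.
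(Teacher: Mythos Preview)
Your proof is correct and follows essentially the same approach as the paper: induction on $x$, using the recursive rule \eqref{rule2} and the closed form \eqref{eq:C11} for $C_{1,1}(x,x,0)$. You have simply spelled out the intermediate computation of $C_{1,1}(x,x,0) = \qint{x+1}/\qint{x+2}$ and identified explicitly which recursion rule applies, details the paper's proof leaves implicit.
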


\begin{lem}
\label{x and y}
For all \(x,y \geq 0\), \(\alpha(x, y, x+y) = (-1)^{x+y} \qint{x+1} \qint{y+1}\).
\begin{proof}
We proceed by induction on $y$: for $y = 0$ this is \cref{x only} and  
\[\alpha(x+1,0,x+1) = - \frac{\alpha(x,0,x)}{C_{1,1}(x,x,0)} = (-1)^{x+1} \frac{\qint{x+1}\qint{x+2}}{\qint{x+1}} = (-1)^{x+1} \qint{x+2}\]
by the induction assumption and \eqref{eq:C11}.
\end{proof}
\end{lem}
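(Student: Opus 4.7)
The plan is to proceed by induction on $y$, with the base case $y=0$ given directly by \cref{x only} (using $\qint{1}=1$). For the inductive step, assuming the identity for all $x \geq 0$ at a given value of $y$, I want to deduce it at $y+1$ by applying recursion rule \eqref{rule3} with $(i,j,k) = (x, y, x+y)$:
\[
\alpha(x, y+1, x+y+1) = -\frac{\alpha(x,y,x+y)}{C_{1,1}(y, x+y, x)}.
\]
The substitution of $(i,j,k) = (x,y,x+y)$ here is valid because the corresponding triples are admissible (cf. \cref{lem:adform} with $d=0$).

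The core computation is to evaluate $C_{1,1}(y, x+y, x)$ using the closed form \eqref{eq:C11}. The two indices in the numerator become $(y+(x+y)+x+4)/2 = x+y+2$ and $(y+(x+y)-x+2)/2 = y+1$, giving
\[
C_{1,1}(y, x+y, x) = \frac{\qint{x+y+2}\qint{y+1}}{\qint{y+2}\qint{x+y+2}} = \frac{\qint{y+1}}{\qint{y+2}}.
\]
Plugging this in together with the inductive hypothesis $\alpha(x,y,x+y) = (-1)^{x+y}\qint{x+1}\qint{y+1}$ yields
\[
\alpha(x, y+1, x+y+1) = -(-1)^{x+y}\qint{x+1}\qint{y+1}\cdot \frac{\qint{y+2}}{\qint{y+1}} = (-1)^{x+y+1}\qint{x+1}\qint{y+2},
\]
which is the desired formula at $y+1$.

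There is no real obstacle here: everything reduces to the explicit simplification of $C_{1,1}$ at admissible triples of the special shape $(x, y, x+y)$, which collapses nicely because the admissibility constraint $k = i+j$ forces cancellation in both factors of \eqref{eq:C11}. The only care needed is to pick the right one of the three recursion rules \eqref{rule1}--\eqref{rule3} so that the induction variable advances in the correct coordinate while keeping the triple in the form $(x, y, x+y)$ covered by \cref{lem:adform}; rule \eqref{rule3} is the natural choice since it increments $j$ and $k$ simultaneously, matching $(x,y,x+y) \mapsto (x, y+1, x+y+1)$.
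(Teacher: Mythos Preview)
Your proof is correct and follows the same strategy as the paper: induction on $y$ with base case supplied by \cref{x only}. In fact, the displayed computation in the paper's proof is a copy-paste slip from the preceding lemma (it reproduces the step for $\alpha(x+1,0,x+1)$ rather than the intended step for $\alpha(x,y+1,x+y+1)$); your inductive step via rule~\eqref{rule3} and the evaluation $C_{1,1}(y,x+y,x)=\qint{y+1}/\qint{y+2}$ is exactly the computation that was meant.
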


\begin{prop}
\label{prop:alpha_non_recursive}
For all \(x,y,d \geq 0\),
\[\alpha(x+d, y+d, x+y) = (-1)^{x+y+d} \frac{\qint{x+1} ... \qint{x+d+1} \qint{y+1} ... \qint{y+d+1}}{\qint{1} ...\qint{d} \qint{x+y+2} ... \qint{x+y+d+1}}\]
As all admissible triples can be written in this form this means that this is an equivalent definition for \(\alpha\).
\begin{proof}
Note that using \cref{C relations} we have that
\[
C_{1,1}(x+d,y+d,x+y)  = \frac{\qint{x+y+d+2} \qint{d + 1}}{ \qint{x+d+2}  \qint{y+d+2}}
\]
We shall prove this result by induction on \(d\). The base case holds by \cref{x and y}, and as
\begin{align*}
&\alpha(x+d+1, y+d+1, x+y) \\
    &= - \frac{\alpha(x+d,y+d,x+y)}{C_{1,1}(x+d,y+d,x+y)} \\
    &= \left( (-1)^{x+y+d+1} \frac{\qint{x+1} ... \qint{x+d+1} \qint{y+1} ... \qint{y+d+1}}{\qint{1} ...\qint{d} \qint{x+y+2} ... \qint{x+y+d+1}}\right) \left( \frac{ \qint{x+d+2}  \qint{y+d+2}}{\qint{x+y+d+2} \qint{d + 1}} \right)\\
    &= (-1)^{x+y+d+1} \frac{\qint{x+1} ... \qint{x+d+1} \qint{x+d+2} \qint{y+1} ... \qint{y+d+1} \qint{y+d+2}}{\qint{1} ...\qint{d}\qint{d+1} \qint{x+y+2} ... \qint{x+y+d+1}\qint{x+y+d+2}}
\end{align*}
we have proven the induction step.
\end{proof}
\end{prop}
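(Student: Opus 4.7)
The plan is to prove the formula by induction on $d$, using the recursive definition of $\alpha$ and the explicit form of $C_{1,1}$ given in \cref{C relations}.

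For the base case $d = 0$, the claimed formula reduces to
\[
\alpha(x, y, x+y) = (-1)^{x+y} \qint{x+1}\qint{y+1},
\]
which is exactly \cref{x and y}, so no additional work is required.

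For the inductive step, I will apply the recursion \eqref{rule1} to the triple $(i,j,k) = (x+d, y+d, x+y)$, which gives
\[
\alpha(x+d+1, y+d+1, x+y) = -\frac{\alpha(x+d, y+d, x+y)}{C_{1,1}(x+d, y+d, x+y)}.
\]
The first task is to compute $C_{1,1}(x+d, y+d, x+y)$ explicitly. Substituting into \eqref{eq:C11}, the indices inside the quantum integers in the numerator become $\tfrac{i+j+k+4}{2} = x+y+d+2$ and $\tfrac{i+j-k+2}{2} = d+1$, yielding
\[
C_{1,1}(x+d, y+d, x+y) = \frac{\qint{x+y+d+2}\qint{d+1}}{\qint{x+d+2}\qint{y+d+2}}.
\]
This is the specialization already noted in the statement of the proposition.

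The remaining step is a straightforward multiplication: combine the induction hypothesis with the reciprocal of this $C_{1,1}$ value, track the sign $(-1)\cdot(-1)^{x+y+d} = (-1)^{x+y+d+1}$, and observe that the two new factors $\qint{x+d+2}$ and $\qint{y+d+2}$ extend the numerator products $\qint{x+1}\cdots\qint{x+d+1}$ and $\qint{y+1}\cdots\qint{y+d+1}$ by one term each, while $\qint{d+1}$ and $\qint{x+y+d+2}$ extend the two denominator products by one term each. This produces exactly the claimed formula with $d$ replaced by $d+1$. Since every admissible triple is of the form $(x+d, y+d, x+y)$ by \cref{lem:adform}, this establishes the closed form on all of $\Ad$. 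No genuine obstacle arises: the whole argument is a careful bookkeeping of telescoping quantum-integer products, and the one non-trivial input, the symmetry identity \eqref{path relation} guaranteeing well-definedness of $\alpha$, has already been verified in the preceding lemma.
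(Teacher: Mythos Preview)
Your proof is correct and follows essentially the same approach as the paper: induction on $d$, with the base case supplied by \cref{x and y} and the inductive step carried out by applying the recursion \eqref{rule1} together with the explicit value of $C_{1,1}(x+d,y+d,x+y)$ obtained from \eqref{eq:C11}. The bookkeeping you describe (extending each of the four quantum-integer products by one factor) is exactly what the paper does.
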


\begin{thm}\label{thm:AScompare}
    For the specialisation \(q=t=s^4\), the action of $\hat{\mathcal{O}}_{\lp}$ on \(\Psi(i,j,k) \in \mathcal{H}\) is equivalent to the action of \(\lp\) on \(m(i,j,k) \in \Sk(\mathcal{H}_2)\) for any $\lp \in \{A_1, A_2, A_3, B_{1 2}, B_{2 3}, B_{1 3} \}$ with the correspondence given by  
    \[
        \lp \mapsto - \hat{\mathcal{O}}_{\lp} \text{ and } n(i, j, k) \mapsto \alpha^{-1}(i,j,k) \Psi_{i, j, k}.
    \]
    Hence, the Arthamonov--Shakirov, genus $2$, spherical DAHA is isomorphic to the image of the skein algebra $\Sk(\Sigma_{2,0})$ in the endomorphism ring of $\Sk(\mathcal H_2)$.
\end{thm}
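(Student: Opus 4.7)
The plan is to verify that the linear map $\Phi: \mathcal H \to \Sk(\mathcal H_2)$ defined on basis vectors by $\Phi(\Psi_{i,j,k}) = \alpha(i,j,k)\, n(i,j,k)$ intertwines each generating operator $\hat{\mathcal O}_{\lp}$ with $-\lp$, for each $\lp \in \{A_1,A_2,A_3,B_{12},B_{13},B_{23}\}$. Since $\Psi_{i,j,k}$ and $n(i,j,k)$ are both indexed by admissible triples, and since the closed form in \cref{prop:alpha_non_recursive} shows $\alpha(i,j,k)$ is a nonzero element of $\mathbb Q(s)$, the map $\Phi$ is a vector space isomorphism. Once the intertwining is established for the six generators, \cref{cor:gen} (which says these loops generate $\Sk(\Sigma_{2,0})$) implies that conjugation by $\Phi$ carries the Arthamonov--Shakirov algebra isomorphically onto the image of the skein algebra in $\End(\Sk(\mathcal H_2))$.

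For the $A$-loops, both actions are diagonal in their respective bases, so the intertwining reduces to matching eigenvalues. With the specialisation $q = t = s^4$ we have $q^{i/2} t^{1/2} + q^{-i/2} t^{-1/2} = s^{2i+2} + s^{-2i-2}$, which is the negation of the $A_1$ eigenvalue in \cref{naction}; the cases $A_2$ and $A_3$ are analogous.

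For the $B$-loops, comparison of matrix coefficients reduces the intertwining to the scalar identity
\[
D_{a,b}(i,j,k)\,\alpha(i,j,k) = -C_{a,b}(i,j,k)\,\alpha(i+a,j+b,k)
\]
for each $(a,b) \in \{\pm 1\}^2$, with analogous identities for $B_{13}$ and $B_{23}$ obtained by permuting $i,j,k$. The case $(1,1)$ is immediate because $D_{1,1}=1$ and the identity is precisely the defining recursion \eqref{rule1} of $\alpha$. Reversing that recursion gives $\alpha(i-1,j-1,k) = -C_{1,1}(i-1,j-1,k)\,\alpha(i,j,k)$, so the $(-1,-1)$ case collapses to $D_{-1,-1}(i,j,k) = C_{-1,-1}(i,j,k)\,C_{1,1}(i-1,j-1,k)$, which, after applying $[n,m]_{q,t} = [n+m]_s$, is a direct manipulation of quantum integers. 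The mixed cases $(1,-1)$ and $(-1,1)$ are not handled by the recursion alone, so one computes $\alpha(i+a,j+b,k)/\alpha(i,j,k)$ using the closed form of \cref{prop:alpha_non_recursive}; after substituting in and cancelling, each case again reduces to an explicit quantum-integer identity. Finally, $B_{13}$ and $B_{23}$ follow from $B_{12}$ by relabelling, since $C_{a,b}$, $D_{a,b}$, and $\alpha$ transform consistently under permutations of the three indices.

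The hard part will be the bookkeeping for the mixed and $(-1,-1)$ cases: one must carefully track numerators and denominators of quantum integers of the form $\qint{(i \pm j \pm k + c)/2}$ and confirm that both sides still agree at boundary admissible triples where some factor would formally vanish in a denominator (cf.\ \cref{rmk:coeff}). With these verifications in place, the algebra isomorphism is an immediate corollary of the linear intertwining together with the generation statement of \cref{cor:gen}.
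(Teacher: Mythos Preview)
Your proposal is correct and follows essentially the same strategy as the paper: verify that the diagonal $A$-eigenvalues match under $q=t=s^4$, then check for each $(a,b)$ that $D_{a,b}(i,j,k)\,\alpha(i,j,k)=-C_{a,b}(i,j,k)\,\alpha(i+a,j+b,k)$, and conclude by the generation statement of \cref{cor:gen}. The only cosmetic difference is that the paper carries out the $(1,1)$ case using the closed form of \cref{prop:alpha_non_recursive} directly, whereas you (more efficiently) observe that this case is literally the defining recursion of $\alpha$; otherwise both proofs defer the remaining $(a,b)$ cases to straightforward quantum-integer bookkeeping and obtain $B_{13},B_{23}$ by symmetry.
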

\begin{proof}
Under the correspondence 
\[
\hat{\mathcal{O}}_{A_{1}} \cdot \Psi_{i, j, k} = (q^{i/2} t^{1/2} + q^{-i/2} t^{-1/2}) \Psi_{i, j, k} =  (s^{2i+2} + s^{-2i-2}) \Psi_{i, j, k}
\] 
becomes 
\[
-\alpha(i,j,k) A_1 \cdot n(i,j,k)  =  (s^{2i+2} + s^{-2i-2}) \alpha(i,j,k) n(i, j, k).
\] 
Dividing by \(-\alpha(i,j,k)\) gives \(A_1 \cdot n(i,j,k)  = (-s^{2i+2} -s^{-2i-2}) n(i, j, k)\) which from \cref{naction} is indeed \(A_1 \cdot n(i,j,k)\). 
The results for \(\hat{\mathcal{O}}_{A_{2}} \Psi_{i, j, k}\) and \(\hat{\mathcal{O}}_{A_{3}} \Psi_{i, j, k}\) follows by symmetry.

As the triple \((i,j,k)\) is admissible, by \cref{lem:adform} we can assume the triple is of the form \((i+d,j+d,i+j)\). Under the correspondence
\[\hat{\mathcal{O}}_{B_{1 2}} \Psi_{i+d,j+d,i+j} = \sum_{a,b \in \{-1,1\}} C_{a,b}(i+d,j+d,i+j) \Psi_{i+d+a, j+d+b, i+j}\]
maps to
\begin{align*}
 &B_{1 2} \cdot \alpha(i+d,j+d,i+j) n(i+d,j+d,i+j)  \\
 &\;=  -\sum_{a,b \in \{-1,1\}} C_{a,b}(i+d,j+d,i+j) \alpha(i+d+a,j+d+b,i+j) n(i+d+a,j+d+b,i+j) \\
    \iff &B_{1 2} \cdot n(i+d,j+d,i+j) \\
    &\;=  \sum_{a,b \in \{-1,1\}} \frac{-C_{a,b}(i+d,j+d,i+j) \alpha(i+d+a,j+d+b,i+j)}{\alpha(i+d,j+d,i+j)} n(i+d+a,j+d+b,i+j)
\end{align*}
So it suffices to show that coefficient is \(D_{1,1}(i+d,j+d,i+j)\).
When \(a=1\) and \(b=1\):
\begin{align*}
    &- \frac{C_{1,1}(i+d,j+d,i+j) \alpha(i+d+1,j+d+1,i+j)}{\alpha(i+d,j+d,i+j)}\\
    &=(-1)^{2i+2j+2d+2}C_{1,1}(i+d,j+d,i+j) \frac{ \frac{\qint{i+1} ... \qint{i+d+2} \qint{j+1} ... \qint{j+d+2}}{\qint{1} ...\qint{d+1} \qint{i+j+2} ... \qint{i+j+d+2}} }{\frac{\qint{i+1} ... \qint{i+d+1} \qint{j+1} ... \qint{j+d+1}}{\qint{1} ...\qint{d} \qint{i+j+2} ... \qint{i+j+d+1}} } \\
    &= C_{1,1}(i+d,j+d,i+j) \frac{\qint{i+d+2}\qint{j+d+2}}{\qint{d+1}\qint{i+j+d+2}} \\
    &= \frac{\qint{i+j+d+2} \qint{d+1}}{ \qint{i+d+2}  \qint{j+d+2}}b
\end{align*}
The other cases when \(a=-1\) and \(b=-1\), when \(a=1\) and \(b=-1\), and when \(a=-1\) and \(b=1\) are similar.
The result for \(\hat{\mathcal{O}}_{B_{1 3}} \Psi_{i, j, k}\) and \(\hat{\mathcal{O}}_{B_{2 3}} \Psi_{i, j, k}\) follows by symmetry.
\end{proof}

Using Le's theorem \cite{Le21} that the action of the skein algebra of a closed surface on the skein module of a handlebody is faithful, we immediately obtain the following.
\begin{cor}\label{cor:asisoskein}
The $t=q=s^4$ specialization of the Arthamonov-Shakirov algebra is isomorphic to the skein algebra $\Sk(\Sigma_{2,0})$.
\end{cor}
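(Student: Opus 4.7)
The plan is to combine the two ingredients that are already in place. Let $\rho \colon \Sk(\Sigma_{2,0}) \to \End_R(\Sk(\mathcal{H}_2))$ denote the representation given by pushing skeins from a neighbourhood of the boundary into the handlebody. There are then two things to identify with one another: the domain $\Sk(\Sigma_{2,0})$, the image $\rho(\Sk(\Sigma_{2,0}))$, and the $t=q=s^4$ specialisation of the Arthamonov--Shakirov algebra.

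First, \cref{thm:AScompare} gives exactly the identification of the image $\rho(\Sk(\Sigma_{2,0}))$ with the Arthamonov--Shakirov algebra at $t=q=s^4$. That theorem was proved by defining the change-of-basis rescaling $n(i,j,k) \mapsto \alpha^{-1}(i,j,k)\Psi_{i,j,k}$ and checking, loop by loop, that the six skein operators $A_1,A_2,A_3,B_{12},B_{13},B_{23}$ on $\Sk(\mathcal H_2)$ correspond (up to the sign $\lp \mapsto -\hat{\mathcal O}_{\lp}$) to the Arthamonov--Shakirov operators on $\mathcal H$. So we can treat the Arthamonov--Shakirov algebra and $\rho(\Sk(\Sigma_{2,0}))$ as the same subalgebra of an endomorphism ring.

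Second, Le's result \cite{Le20} asserts that the action of the skein algebra of a closed surface on the skein module of a handlebody filling is faithful. Applied to our situation, this says that $\rho$ is injective, and therefore the canonical surjection $\Sk(\Sigma_{2,0}) \twoheadrightarrow \rho(\Sk(\Sigma_{2,0}))$ is in fact an isomorphism.

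Composing the two isomorphisms
\[
\Sk(\Sigma_{2,0}) \;\xrightarrow{\;\rho\;}\; \rho(\Sk(\Sigma_{2,0})) \;\xrightarrow{\;\cref{thm:AScompare}\;}\; \text{AS algebra at } t=q=s^4
\]
gives the claim. There is essentially no obstacle here beyond invoking the two named results: all of the substantive work (the matrix coefficient calculations behind \cref{naction,maction}, the non-recursive formula for $\alpha$ in \cref{prop:alpha_non_recursive}, and the case-by-case verification in the proof of \cref{thm:AScompare}) has already been done, and Le's faithfulness is cited as an input. The only thing the corollary needs to make explicit is the conditional nature of the statement, since it rests on \cite{Le20}.
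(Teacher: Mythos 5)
Your proof is correct and follows exactly the paper's argument: \cref{thm:AScompare} identifies the $t=q=s^4$ Arthamonov--Shakirov algebra with the image of $\Sk(\Sigma_{2,0})$ in $\End_R(\Sk(\mathcal H_2))$, and Le's faithfulness result \cite{Le20} upgrades the surjection onto that image to an isomorphism. Your remark about the conditional dependence on \cite{Le20} is also consistent with how the paper treats this corollary.
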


Finally, we give a precise statement relating these algebras to quantum knot invariants. The standard embedding of the handlebody $\mathcal H_2$ into $S^3$ induces a map on the corresponding skein modules. Since the skein module of $S^3$ is just the ground ring $R$, we can view this map as an evaluation function:
\[
\mathrm{ev}: \Sk(\mathcal H_2) \to \Sk(S^3) = R 
\]
This map has been computed explicitly in the $n(i,j,k)$ basis in \cite{masbaum1994}, and for concreteness we recall the formula here. Let $(a,b,c)$ be an admissible triple, and let $(i,j,k)$ be the labels on the internal vertices, which can be written explicitly as $i = (b+c-a)/2$, $j = (a+c-b)/2$, and $k = (a+b-c)/2$.
\begin{thm}[{\cite[Thm. 1]{masbaum1994}}]
The evaluation formula is
\begin{equation}\label{eq:eval}
\mathrm{ev}(n(a,b,c)) = (-1)^{i+j+k} \frac{ [i+j+k+1]![i]![j]![k]! }{[i+j]![j+k]![i+k]!}
\end{equation}
where $[\ell]!$ is the $q$-factorial in the variable $s$.
\end{thm}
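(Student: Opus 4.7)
The plan is to prove the evaluation formula by induction, using the recursive structure of the Jones-Wenzl idempotents and the trivalent-vertex unfolding described in \cref{back:wenzl}. Once we embed the handlebody $\mathcal{H}_2$ into $S^3$, the basis element $n(a,b,c)$ becomes a closed theta graph carrying Jones-Wenzl idempotents of sizes $a,b,c$ on its three edges, joined at two trivalent vertices which each split into $i$, $j$, $k$ parallel strands. The goal is to evaluate this scalar.

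First, I would establish the base cases. For $(a,b,c)=(0,0,0)$ the diagram is empty and evaluates to $1$, matching the formula. For any admissible triple with one label zero, say $c=0$, admissibility forces $a=b$, so $i = a$, $j=k=0$, and the diagram collapses to a single closed Jones-Wenzl idempotent of index $a$; its evaluation is the standard scalar $(-1)^{a}\qint{a+1}$, which also agrees with the proposed product after the obvious cancellations in numerator and denominator.

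For the inductive step I would apply the Jones-Wenzl recursion to one of the three idempotents, say the one on the edge labelled $a$, and close the diagram with the remaining two idempotents. The first term of the recursion yields a theta graph in which one strand runs straight through both trivalent vertices, reducing after the resulting re-bracketing of strands at each vertex to $\mathrm{ev}(n(a-1,b-1,c))$ or $\mathrm{ev}(n(a-1,b,c-1))$ depending on which pair of the indices $j,k$ the outermost strand is grouped with. The second term of the Jones-Wenzl recursion caps off a strand, producing a local "bubble" of two parallel Jones-Wenzl boxes which can be evaluated by the standard bubble identity (two Jones-Wenzl boxes of index $a$ joined along $a{-}1$ parallel strands reduce to a scalar times a single box). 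Assembling the two contributions produces a linear recursion expressing $\mathrm{ev}(n(a,b,c))$ in terms of $\mathrm{ev}(n(a-1,b-1,c))$, $\mathrm{ev}(n(a-1,b,c-1))$ and $\mathrm{ev}(n(a-2,b,c))$.

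The main obstacle, as usual with Jones-Wenzl computations, is the bookkeeping: one must verify that the right-hand side of \eqref{eq:eval} satisfies the same recursion with the same initial conditions, and this amounts to a nontrivial identity among ratios of quantum factorials in $\qint{i}!$, $\qint{j}!$, $\qint{k}!$ and $\qint{i+j+k+1}!$. An alternative strategy that sidesteps some of this is the fusion computation in the style of Kauffman-Lins: insert a resolution of identity on one of the trivalent vertices via the change of basis in \cref{thm:ChangeofBasis} and reduce the evaluation to a sum of tetrahedral $6j$-symbols, only one of which survives because the planar structure forces a single admissible term. Either route ultimately reduces the problem to verifying a single $q$-binomial identity, which I expect to be the most delicate step.
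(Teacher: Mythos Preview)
The paper does not prove this statement at all: it is quoted verbatim as \cite[Thm.~1]{masbaum1994} and used as a black box to set up \cref{cor:Jones}. There is therefore no proof in the paper to compare your proposal against.

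For what it is worth, your outline is roughly in the spirit of the original Masbaum--Vogel argument, which does proceed by induction via the Jones--Wenzl recursion applied to one edge of the theta graph. Your description of the inductive step is somewhat imprecise, however: when you expand the idempotent on the edge labelled $a$, the first term does not directly give $\mathrm{ev}(n(a{-}1,b{-}1,c))$ or $\mathrm{ev}(n(a{-}1,b,c{-}1))$, because removing one strand from the $a$-box does not automatically produce smaller Jones--Wenzl idempotents on the other two edges. The usual approach is instead to compute the value of a ``triangle'' diagram (three Jones--Wenzl idempotents of sizes $i,j,k$ joined cyclically) by a single-variable induction, and then observe that the theta evaluation is that triangle value times the loop value of one of the edges. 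Your alternative route via $6j$-symbols is circular in this context, since the standard derivation of the $6j$-symbols already uses the theta evaluation as an input. If you want a self-contained argument, follow the triangle-evaluation route.
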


Now suppose $\alpha$ is a simple closed curve on $\Sigma_{2,0}$. If we embed $\Sigma_{2,0}$ into $S^3$ in the standard way, this induces an evaluation map 
\[
\mathrm{ev_{\Sigma}}: \Sk(\Sigma_{2,0}) \to \Sk(S^3) = R
\]
Under this embedding we can view $\alpha$ as a knot in $S^3$, and by definition, the Jones polynomial $J(\alpha)$ of $\alpha$ is given by $\mathrm{ev}_\Sigma(\alpha)$. We could instead first embed the knot in the solid handlebody, and 
then embed this in $S^3$, which leads to the following (tautological) identity:
\begin{equation}\label{eq:taut}
    J(\alpha) = \mathrm{ev}_{\Sigma}(\alpha) = \mathrm{ev}(\alpha \cdot n(0,0,0))
\end{equation}

\begin{cor}\label{cor:Jones}
Suppose that $\mathrm{ev}_{q,t}: \mathcal H \to R$ is a linear map whose $t=q$ specialisation is equal to the evaluation map in \eqref{eq:eval}. Suppose that $\alpha_{q,t}$ is an element in the Arthamonov-Shakirov algebra whose $t=q$ specialisation is equal to $\alpha \in \Sk(\Sigma_{2,0})$.
Then the specialisation $\mathrm{ev}_{s^4,s^4}(\alpha_{s^4,s^4}\cdot \Psi(0,0,0))$ is equal to the Jones polynomial of $\alpha$, viewed as a knot in $S^3$ under the standard embedding.
\end{cor}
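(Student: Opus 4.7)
The plan is to chain together the identifications the paper has already established: no new skein-theoretic content should be needed. By equation \eqref{eq:taut}, the Jones polynomial of $\alpha$ equals $\mathrm{ev}(\alpha\cdot n(0,0,0))$, so it suffices to show
\[
\mathrm{ev}_{s^4,s^4}\bigl(\alpha_{s^4,s^4}\cdot \Psi(0,0,0)\bigr) \;=\; \mathrm{ev}\bigl(\alpha\cdot n(0,0,0)\bigr).
\]

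First I would invoke Corollary \ref{cor:asisoskein} (which combines \cite{Le20} with Theorem \ref{thm:AScompare}) to identify, at $t=q=s^4$, the Arthamonov--Shakirov algebra with $\Sk(\Sigma_{2,0})$ and the action on $\mathcal H$ with the action on $\Sk(\mathcal H_2)$, via the intertwiner $n(i,j,k)\mapsto \alpha^{-1}(i,j,k)\Psi_{i,j,k}$ and the sign-twisted identification $\mathrm{loop}\mapsto -\hat{\mathcal O}_{\mathrm{loop}}$. The hypothesis that $\alpha_{q,t}|_{t=q}$ equals $\alpha$ is then to be read through this isomorphism. Because $\alpha(0,0,0)=1$, the intertwiner sends $n(0,0,0)$ to $\Psi(0,0,0)$; expanding
\[
\alpha\cdot n(0,0,0) \;=\; \sum_{(i,j,k)\in\Ad} c_{ijk}\,n(i,j,k)
\]
in the theta basis and transporting the action through the intertwiner term by term yields $\alpha_{s^4,s^4}\cdot\Psi(0,0,0) = \sum c_{ijk}\,\alpha(i,j,k)\,\Psi_{i,j,k}$.

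Next I would use the specialisation hypothesis on $\mathrm{ev}_{q,t}$. Setting $t=q=s^4$ and reading through the same basis change, the hypothesis forces $\mathrm{ev}_{s^4,s^4}(\Psi_{i,j,k}) = \alpha(i,j,k)\,\mathrm{ev}(n(i,j,k))$. Applying this coefficient-by-coefficient to the expansion above gives
\[
\mathrm{ev}_{s^4,s^4}\bigl(\alpha_{s^4,s^4}\cdot\Psi(0,0,0)\bigr) \;=\; \sum c_{ijk}\,\mathrm{ev}(n(i,j,k)) \;=\; \mathrm{ev}\bigl(\alpha\cdot n(0,0,0)\bigr),
\]
which equals $J(\alpha)$ by \eqref{eq:taut}.

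I expect the only real obstacle to be bookkeeping. The correspondence in Theorem \ref{thm:AScompare} twists loops by an overall sign and rescales basis vectors by $\alpha(i,j,k)^{\pm 1}$, so one must fix once and for all how both hypotheses (``$\alpha_{q,t}|_{t=q}=\alpha$'' and ``$\mathrm{ev}_{q,t}|_{t=q}=\mathrm{ev}$'') are read across the isomorphism of Corollary \ref{cor:asisoskein}, and check that the sign and scalar conventions are mutually consistent. Once this is done, the corollary reduces to the three-line calculation above.
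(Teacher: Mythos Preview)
Your approach is essentially the same as the paper's, just spelled out in more detail; the paper compresses everything into the single line ``\cref{cor:asisoskein} implies $\mathrm{ev}_{s^4,s^4}(\alpha_{s^4,s^4}\cdot\Psi(0,0,0))=\mathrm{ev}(\alpha\cdot n(0,0,0))$,'' which is precisely the chain of identifications you unpack. One small slip: transporting $\alpha\cdot n(0,0,0)=\sum c_{ijk}\,n(i,j,k)$ through the intertwiner $n(i,j,k)\mapsto\alpha^{-1}(i,j,k)\Psi_{i,j,k}$ gives $\sum c_{ijk}\,\alpha^{-1}(i,j,k)\,\Psi_{i,j,k}$, not $\alpha(i,j,k)$ as you wrote---but this is exactly the bookkeeping you flag in your last paragraph, and since your step~4 produces the reciprocal factor $\alpha(i,j,k)$, the two cancel and your final line is correct.
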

\begin{proof}
By \cref{cor:asisoskein}, the Arthamonov-Shakirov algebra and its action on $\mathcal H$ specialise at $t=q=s^4$ to the skein algebra of $\Sigma_{2,0}$ and its action on the skein module of the handlebody. This implies 
\[
\mathrm{ev}_{s,s}(\alpha_{s,s}\cdot \Psi(0,0,0)) = \mathrm{ev}(\alpha \cdot n(0,0,0))
\]
The right hand side of this equality is the Jones polynomial of $\alpha$ by equation \eqref{eq:taut}.
\end{proof}

\begin{rmk}
The `correct' evaluation map $\mathrm{ev}_{q,t}$ should depend nontrivially on $q$ and $t$, and should only be equal to the skein-theoretic evaluation after these parameters are set equal. However, this evaluation map isn't defined in \cite{arthamonov&shakirov2017}, and finding the `correct' definition is a nontrivial task. (One could set $\mathrm{ev}_{q,t} := \mathrm{ev}_s$, but this would not be very interesting.) 

We note that one reason a nontrivial $q,t$ deformation of the evaluation map may be interesting comes from knot homology. More precisely, inspired by \cite{AS15}, Cherednik \cite{CerednikJones, CD16} has conjectured that Khovanov homology of (iterated) torus knots can be computed using the $sl_2$ spherical DAHA, its action on the polynomial representation $k[x]$, and the evaluation map given by the standard pairing on the polynomial representation. These are $q,t$ deformations of the skein algebra of the torus, the skein module of the solid torus, and the skein-theoretic evaluation map, respectively (see \cite{Sam19}).
\end{rmk}

\appendix
\appendixpage
\addappheadtotoc
\begin{appendix}
% !TEX root = main.tex
\section{Calculation of Loop Actions}
\label{appendix:cals}
In this appendix we calculate almost all the actions of the loops depicted below on \(\Sk(\mathcal{H}_2)\), the skein module of the genus \(2\) handlebody. 
For the loops in the left figure, we shall use the non-dumbbell basis (or \(\theta\)-basis) for \(\Sk(\mathcal{H}_2)\): \(n(i,j,k)\) for all admissible \((i,j,k)\).
For the loops in the right figure, we use the dumbbell basis for \(\Sk(\mathcal{H}_2)\): \(m(i,j,k)\) for all \(i,j,k\) such that \((i,i,j)\) and \((k, k, j)\) are admissible (see \cref{sec:skeinmodulebasis}).
We leave the calculation of the action of $A_2$ on the basis $m(i, j, k)$ to \cref{appendix:finalloop} as this calculation is significantly more complex than the others. 

\begin{figure}[!h]
\centering
\begin{subfigure}{.4\textwidth}
  \centering
  \diagramhh{figures}{cycles}{3pt}{0pt}{0.3}
\end{subfigure}
\hspace{5mm}
\begin{subfigure}{.4\textwidth}
  \centering
  \diagramhh{figures}{cycles2}{0pt}{0pt}{0.3}
\end{subfigure}
\end{figure}

In \cref{back:wenzl} we defined Jones-Wenzl idempotents and used them to define trivalent vertices. We now outline a number of results which we use as graphical calculus for skein modules, using \cite{masbaum1994} as a reference. 
\begin{lem}{\cite{masbaum1994,kauffmanBook1994}}
\label{lem:diagramprops}
Let $m$ and $n$ be integers.

\begin{tabular}{@{}p{.35\textwidth}@{\hspace{.05\textwidth}}p{.5\textwidth}@{}}
\begin{equation} 
    \diagramhh{properties}{properties1}{16pt}{0pt}{0.75} = \diagramhh{properties}{properties2}{16pt}{0pt}{0.75},  
    \label{idempotent} 
\end{equation}
&
\begin{equation} 
    \diagramhh{properties}{properties5}{16pt}{0pt}{0.75} = 0, 
    \label{turnback} 
\end{equation}
\\
\begin{equation}
    \diagramhh{properties}{properties3}{16pt}{0pt}{0.75} = \diagramhh{properties}{properties4}{16pt}{0pt}{0.75}, 
    \label{property1}
\end{equation}
&
\begin{equation} 
    \diagramhh{properties}{bookloop1}{16pt}{0pt}{0.75} =  (-s^{2m+2}-s^{-2m-2}), \diagramhh{properties}{bookloop2}{16pt}{0pt}{0.75} 
    \label{bookloop}
\end{equation}
\\
\begin{equation}
    \diagramhh{properties}{prop62}{16pt}{0pt}{0.75} = (-1)^m s^{m(m+2)} \diagramhh{properties}{bookloop2}{16pt}{0pt}{0.75}, 
    \label{prop6a} 
\end{equation}
&
\begin{equation} 
    \diagramhh{properties}{prop64}{16pt}{0pt}{0.75} = (-1)^m s^{-m(m+2)}\diagramhh{properties}{bookloop2}{16pt}{0pt}{0.75} 
    \label{prop6b}
\end{equation}
\end{tabular}
\end{lem}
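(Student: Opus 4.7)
The plan is to derive each of the six identities from the recursive definition of the Jones--Wenzl idempotent together with the Kauffman bracket skein relations \eqref{skeinrel_cross} and \eqref{skeinrel_loop}, following the standard development in \cite{masbaum1994, kauffmanBook1994}. Since the statement is a compendium of classical graphical-calculus facts, the proof will in the end consist largely of an appeal to those references, but it is helpful to explain how the identities are interlocked.

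I would prove them in the following order, so that each builds on the previous ones. First, establish the turnback identity \eqref{turnback}: this is the cornerstone defining property of the Jones--Wenzl projector, and is proved by induction on $n$. In the inductive step one expands $p_{n+1}$ using the recursive definition; the leading term produces a turnback on $p_n$ which vanishes by induction, and the correction term, after evaluating the resulting simple loop via \eqref{skeinrel_loop}, is precisely cancelled by the coefficient $[n]/[n+1]$ appearing in the recursion.

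Next, the idempotent property \eqref{idempotent} and the slide property \eqref{property1} follow by expanding one of the two boxes via the recursion: every term in the expansion other than the leading one involves a turnback meeting a projector, and so vanishes by \eqref{turnback}. The loop value \eqref{bookloop} is then obtained by induction on $m$; resolving the crossings where the encircling loop meets the strands via \eqref{skeinrel_cross} gives a linear two-term recursion whose solution, using standard quantum-integer identities, is the scalar $-(s^{2m+2}+s^{-2m-2})$.

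Finally, the twist identities \eqref{prop6a} and \eqref{prop6b} are proved by resolving each of the crossings of the twist using \eqref{skeinrel_cross}; after applying \eqref{idempotent} and \eqref{turnback} to eliminate all non-leading contributions, what remains is the eigenvalue $(-1)^m s^{\pm m(m+2)}$ of the framing twist on the image of $p_m$. The main obstacle throughout is not mathematical but combinatorial: keeping careful track of the signs, the framing contributions, and the quantum-integer arithmetic. Since all of these identities are stated and proved in \cite{masbaum1994, kauffmanBook1994}, the ultimate proof is simply to cite them.
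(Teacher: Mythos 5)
Your proposal is correct and matches the paper's approach: the paper gives no proof of this lemma at all, simply attributing these standard Jones--Wenzl facts to \cite{masbaum1994,kauffmanBook1994}, which is exactly where your argument ultimately lands. Your sketch of how the identities follow from the recursion, the skein relations, and induction is the standard development in those references (modulo small details, e.g.\ in the turnback induction the key cancellation uses the partial closure of the projector rather than the leading term vanishing outright), so citing them is entirely adequate here.
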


\begin{lem}[\cite{masbaum1994}]
\begin{align} 
    \diagramhh{properties}{paperleaf1}{15pt}{0pt}{0.75} &= \frac{\qint{i}}{\qint{i+j}}\diagramhh{properties}{paperleaf2}{15pt}{0pt}{0.75}
    \label{paperleaf}
    \\
    \diagramhh{properties}{papercircstrand1}{15pt}{0pt}{0.75} &= \frac{\qint{i+j+k+1}\qint{j}}{\qint{i+j}\qint{k+j}} \diagramhh{properties}{papercircstrand2}{15pt}{0pt}{0.75}
    \label{papercircstrand}
\end{align}
\end{lem}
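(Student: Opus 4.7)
Both identities are proved by induction using the recursive definition of the Jones--Wenzl idempotent from \cref{back:wenzl}, together with the elementary identities collected in \cref{lem:diagramprops}, chiefly the turnback property \eqref{turnback} and the loop value \eqref{bookloop}. In both cases, the strategy is to expand one of the Jones--Wenzl boxes using its recursion, observe that most of the resulting terms vanish because they contain a turnback, and apply the induction hypothesis to what remains.

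For \eqref{paperleaf}, I would induct on $i$. The base case is handled by direct computation: for $i=1$ the leaf is a single strand doubling back into the idempotent, and expanding the Jones--Wenzl box of size $i+j$ via its recursion reduces the left-hand side, using \eqref{bookloop}, to $[1]/[1+j]$ times the claimed right-hand side. For the inductive step, apply the Jones--Wenzl recursion once to peel a strand off the box to which the leaf is attached. One summand produces a turnback that collides with the leaf and therefore vanishes by \eqref{turnback}; the other summand realises the same leaf configuration with $i$ replaced by $i-1$ and the box size reduced by one. The explicit coefficient $[i+j-1]/[i+j]$ from the recursion, combined with the inductive coefficient $[i-1]/[i+j-1]$ and a loop factor from \eqref{bookloop}, telescopes to $[i]/[i+j]$.

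For \eqref{papercircstrand}, the cleanest route is to reduce to \eqref{paperleaf} via a fusion argument: insert a pair of trivial trivalent vertices so that the circle of size $j$ around the $(i+j)$-strand decomposes as two leaf configurations meeting back-to-back. Applying \eqref{paperleaf} at each end, and then evaluating the resulting closed loop by \eqref{bookloop}, yields the three factors $[j]$, $[i+j+k+1]$, and the denominator $[i+j][k+j]$ in the claimed coefficient. Alternatively one may induct on $j$ directly: expand the $j$-strand as a Jones--Wenzl and peel off one strand at a time, again using \eqref{turnback} to kill all but one term at each step.

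\textbf{Main obstacle.} The substantive difficulty is the combinatorial bookkeeping of quantum-integer coefficients through repeated applications of the recursion. At each step the recursion contributes a ratio $[n]/[n+1]$, the loop value contributes $-[m+1]-[m-1]$, and in \eqref{papercircstrand} both of the asymmetric denominators $[i+j]$ and $[k+j]$ must emerge simultaneously. Making this work requires the standard quantum identities such as $[a+1][b]-[a][b+1]=[b-a]$, and for \eqref{papercircstrand} it is essential to set up the induction (or the fusion decomposition) so that it respects the symmetry of the diagram under swapping the two ends of the strand, since otherwise the two denominators will not appear on an equal footing.
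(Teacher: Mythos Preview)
The paper does not prove this lemma: it is stated with a citation to \cite{masbaum1994} and used as a black box in the subsequent computations. There is therefore no in-paper proof to compare your proposal against.

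That said, your plan is the standard one and is essentially how the identities are established in the cited reference: one inducts using the Jones--Wenzl recursion, kills turnback terms via \eqref{turnback}, and telescopes the resulting quantum-integer ratios. Your description of \eqref{paperleaf} is slightly off in one detail: in the inductive step it is not that one summand of the recursion vanishes outright by a turnback, but rather that after applying the recursion to the $(i+j)$-box the two terms recombine via the quantum-integer identity you mention, with the loop value \eqref{bookloop} entering only at the closure. For \eqref{papercircstrand}, your ``fusion'' reduction to two applications of \eqref{paperleaf} is exactly the argument in \cite{masbaum1994}, and is cleaner than a direct induction on $j$.
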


Calculating the actions of \(A_1\), \(A_2\) and \(A_3\) on $n(i, j, k)$ follows directly from \cref{lem:diagramprops} \eqref{bookloop}
\begin{prop}
\label{prop:AactionN}
\begin{align*}
    A_1 \cdot n(i,j,k) &= (-s^{2i+2} -s^{-2i-2}) n(i,j,k) \\
    A_2 \cdot n(i,j,k) &= (-s^{2j+2} -s^{-2j-2}) n(i,j,k) \\
    A_3 \cdot n(i,j,k) &= (-s^{2k+2} -s^{-2k-2}) n(i,j,k) \\
\end{align*}
\end{prop}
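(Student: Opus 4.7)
The plan is straightforward: identify each loop $A_\ell$ on $\Sigma_{2,0}$ with a meridian of the corresponding strand of the theta-graph spine of $\mathcal H_2$ after pushing into the handlebody, and then invoke the standard eigenvalue identity \eqref{bookloop} from \cref{lem:diagramprops}.

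First I would recall the definition of $n(i,j,k)$ from \cref{fig:bases}: this is the theta graph embedded in $\mathcal H_2$ whose three parallel strands carry the Jones-Wenzl projectors of colours $i$, $j$, $k$ (with the trivalent vertex well-defined since $(i,j,k)$ is admissible). The loops $A_1, A_2, A_3$ depicted in \cref{fig:cycles} sit on the boundary surface $\Sigma_{2,0} = \partial \mathcal H_2$, and when regarded as elements of $\Sk(\mathcal H_2)$ via the action map, they are pushed slightly into the interior of the handlebody. The key topological observation is that each $A_\ell$, once isotoped into $\mathcal H_2$, can be arranged to be a small unknotted circle encircling exactly one strand of the theta-graph spine (the $i$-strand for $A_1$, the $j$-strand for $A_2$, the $k$-strand for $A_3$), with linking number one and linking number zero with the other two strands.

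Once this topological identification is in place, the action is immediate from \eqref{bookloop}, which says that an unknotted circle encircling a Jones--Wenzl strand of colour $m$ evaluates to the scalar $-s^{2m+2}-s^{-2m-2}$ times the strand. Applying this with $m=i$, $m=j$, and $m=k$ in turn yields the three claimed eigenvalues, and the rest of the theta diagram is unchanged, so we obtain
\[
A_1 \cdot n(i,j,k) = (-s^{2i+2}-s^{-2i-2})\,n(i,j,k),
\]
and similarly for $A_2$ and $A_3$.

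The only non-routine ingredient is the topological identification of $A_\ell$ with a meridian of the $\ell$-th strand, but this is essentially immediate from the picture: under the standard handle decomposition, the $A_\ell$ curves on $\partial \mathcal H_2$ are exactly the boundaries of the meridian disks dual to the strands of the spine. No further diagrammatic manipulation is required, which is why this case is much shorter than the dumbbell-basis computation for $A_2$ deferred to \cref{appendix:finalloop}.
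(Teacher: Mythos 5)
Your proposal is correct and is essentially the paper's own argument: the paper proves \cref{prop:AactionN} by noting it follows immediately from \cref{lem:diagramprops}, i.e.\ the encircling-loop identity \eqref{bookloop}, after the evident isotopy of each $A_\ell$ to a zero-framed meridian of the corresponding edge of the theta spine. Nothing further is needed.
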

\begin{proof} This follows immediately from \cref{lem:diagramprops}.
\end{proof}

The actions of \(A_1\), \(A_3\) and \(X\) on $m(i,j,k)$ also follow directly from \cref{lem:diagramprops} \eqref{bookloop}.
\begin{prop}
\label{prop:AXactionM}
For all \(i,j,k\) such that the triples \((i, i, j)\) and \((k, k, j)\) are admissible, 
\begin{align}
X \cdot m(i, j, k) & = (-s^{2j+2} - s^{-2j-2}) m(i, j,k ), \\
A_1 \cdot m(i, j, k) &=  (-s^{2i+2} - s^{-2i-2}) m(i, j,k ) \\
A_3 \cdot m(i, j, k) &= (-s^{2k+2} - s^{-2k-2}) m(i, j,k )
\end{align}
\end{prop}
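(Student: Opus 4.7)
The plan is to prove all three identities simultaneously by direct application of the graphical calculus, specifically the ``book loop'' identity \eqref{bookloop} from \cref{lem:diagramprops}. Each of the three loops $A_1$, $A_3$, and $X$ is a simple closed curve on $\Sigma_{2,0}$ which, when pushed into the handlebody $\mathcal{H}_2$ and stacked against the dumbbell diagram for $m(i,j,k)$, encircles exactly one of the three edges of that diagram (the two $i$- and $k$-labeled ``dumbbell'' loops and the $j$-labeled connecting strand, respectively).

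First I would recall the precise picture of $m(i,j,k)$ from \cref{fig:bases}(b): it consists of two ``dumbbell'' loops labeled $i$ and $k$ joined by an edge labeled $j$ through two trivalent vertices with admissible triples $(i,i,j)$ and $(k,k,j)$. Then I would draw, for each of $A_1$, $A_3$, and $X$, the dumbbell basis element with the loop stacked on top (i.e.\ the result of the action in the skein module). In each case, the loop is isotopic in the handlebody to a small circle encircling a single edge of the dumbbell diagram carrying the Jones--Wenzl idempotent of label $i$, $k$, or $j$ respectively, with no other strands threaded through it.

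Once we are in that configuration, the identity \eqref{bookloop} applies verbatim: a small loop around an $n$-labeled strand of Jones--Wenzl idempotent evaluates to the scalar $-s^{2n+2} - s^{-2n-2}$ times the same diagram without the loop. Setting $n = i$, $n = k$, and $n = j$ yields the three claimed scalar actions. No further manipulation is needed, and there is no obstacle beyond verifying the obvious isotopy placing each of $A_1$, $A_3$, $X$ around the corresponding edge of the dumbbell graph; this is transparent from the pictures in \cref{fig:cycles2} and \cref{fig:bases}(b).
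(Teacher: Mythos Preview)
Your proposal is correct and matches the paper's proof exactly: the paper also invokes the book-loop identity \eqref{bookloop} directly, illustrating only the $X$ case (a loop around the $j$-edge of the dumbbell) and leaving $A_1$ and $A_3$ implicit by symmetry. Your write-up is in fact slightly more detailed than the paper's one-line version.
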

\begin{proof}
This follows from \cref{lem:diagramprops} \eqref{bookloop}, for example 
\[X \cdot m(i, j, k) = \smalldiagram{alt}{alt0} \stackrel{\eqref{bookloop}}{=}  (-s^{2j+2}-s^{-2j-2}) \smalldiagram{alt}{alt4}.\]
\end{proof}

We shall now calculate the action of $B_{12}$, $B_{13}$ and $B_{23}$ on the $n(i,j,k)$ basis of $\Sk(\mathcal{H}_2)$.

\begin{defn}
Let $(i, j, k)$ be an admissible triple. We define the coefficients $D_{a,b}(i,j,k)$ for $a,b = \pm 1$ as follows
\begin{alignat*}{5}
D_{1,-1}(i,j,k) &= -\frac{\qint{\frac{j+k-i}{2}}^2}{ \qint{j} \qint{j+1}} 
&&D_{-1,-1}(i,j,k) &&= \frac{\qint{\frac{i+j+k+2}{2}}^2 \qint{\frac{i+j-k}{2}}^2}{ \qint{i} \qint{i+1} \qint{j} \qint{j+1}} \\
D_{-1,1}(i,j,k)\;&= -\frac{\qint{\frac{i+k-j}{2}}^2}{ \qint{i} \qint{i+1}}
&\qquad \quad& \centermathcell{D_{1,1}(i,j,k)} &&= 1
\end{alignat*}
where the coefficient is defined to be $0$ if the denominator is $0$.
\end{defn}

\begin{prop}
\label{prop:BactionN}
Let $(i, j, k)$ be an admissible triple.
\begin{align*}
    B_{12} \cdot n(i,j,k) &= \sum_{a,b \in \{-1,1\}} D_{a,b}(i,j,k) n(i+a, j+b, k)\\
    B_{13} \cdot n(i,j,k) &= \sum_{a,b \in \{-1,1\}} D_{a,b}(i,k,j) n(i+a, j, k+b)\\
    B_{23} \cdot n(i,j,k) &= \sum_{a,b \in \{-1,1\}} D_{a,b}(j,k,i) n(i, j+a, k+b)
\end{align*}
\begin{proof}
\begin{alignat*}{4}
    &\rlap{$B_{1 2} \cdot n(i,j,k)$}\\
     &\centermathcell{\stackrel{\text{defn}}{=}}&&&\diagramhh{yaction}{yactionm2}{25pt}{0pt}{0.75} &&\centermathcell{\stackrel{\text{isotopy}}{=}}& \diagramhh{yaction}{yactionm3}{25pt}{0pt}{0.75}\\
     &\centermathcell{\stackrel{\text{Wenzl}}{=}}&&& \diagramhh{yaction}{yactionm4}{25pt}{0pt}{0.75} &&- \frac{\qint{i}}{\qint{i+1}}& \;\diagramhh{yaction}{yactionm5}{25pt}{0pt}{0.75} \\
     &\quad& - \frac{\qint{j}}{\qint{j+1}}&& \;\diagramhh{yaction}{yactionm6}{18pt}{0pt}{0.75} &&\;+ \frac{\qint{i}\qint{j}}{\qint{i+1}\qint{j+1}}& \;\diagramhh{yaction}{yactionm7}{25pt}{0pt}{0.75} \\
     &\stackrel{\eqref{paperleaf},\eqref{papercircstrand}}{=}&&& \diagramhh{yaction}{yactionm4}{25pt}{0pt}{0.75} &&- \frac{\qint{b}^2}{\qint{i+1}\qint{i}}&\; \diagramhh{yaction}{yactionm8}{25pt}{0pt}{0.75} \\
     &\quad&- \frac{\qint{c}^2}{\qint{j+1}\qint{j}}\;&& \diagramhh{yaction}{yactionm9}{18pt}{0pt}{0.75} &&+ \frac{\qint{a}^2\qint{a+b+c+1}^2}{\qint{i+1}\qint{j+1}\qint{i}\qint{j}}& \;\diagramhh{yaction}{yactionm10}{25pt}{0pt}{0.75} \\
     &\centermathcell{=}&&& \centermathcell{n(i+1,j+1,k)} &&- \frac{\qint{\frac{i+k-j}{2}}^2}{\qint{i}\qint{i+1}}& \centermathcell{n(i-1, j+1, k)} \\
    &\quad&- \frac{\qint{ \frac{j+k-i}{2}}^2}{\qint{j}\qint{j+1}}&& \centermathcell{n(i+1, j-1, k)} &&+  \frac{\qint{\frac{i+j-k}{2}}^2\qint{\frac{i+j+k+2}{2}}^2}{\qint{i}\qint{i+1}\qint{j}\qint{j+1}}& \centermathcell{n(i+1, j+1, k)}
\end{alignat*}
and the other cases are symmetric.
\end{proof}
\end{prop}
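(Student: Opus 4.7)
The plan is to compute the action of $B_{12}$ on $n(i,j,k)$ directly via diagrammatics, and then derive the formulas for $B_{13}$ and $B_{23}$ from the obvious permutation symmetry of the theta graph.

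First I would draw $B_{12}\cdot n(i,j,k)$ by stacking the loop $B_{12}$ on top of the theta graph for $n(i,j,k)$, then isotoping until one piece of the $B_{12}$ strand sits next to the $i$-labelled cable just above the top trivalent vertex, and the other piece sits next to the $j$-labelled cable. Extending the Jones--Wenzl projector $f_i$ so that its $i$ strands plus the incident $B_{12}$ strand collectively pass through a larger $(i{+}1)$-box, and similarly on the $j$-side, I would apply the Jones--Wenzl recursion from Section \ref{back:wenzl} at both the $i$-box and the $j$-box. Each application splits the diagram into a "straight" term (sign $+1$) and a "cap--cup" correction term (sign $-1$ with coefficient $-[i]_s/[i+1]_s$ or $-[j]_s/[j+1]_s$); doing this on both sides produces exactly four terms indexed by pairs $(a,b)\in\{\pm 1\}^2$. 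The $(+1,+1)$ term is, by construction, the theta graph $n(i+1,j+1,k)$ with coefficient $1=D_{1,1}(i,j,k)$.

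The three remaining terms are evaluated by reducing the "leaf" or "triangle" that appears where a cap--cup has been inserted. For the $(-1,+1)$ term, the $i$-cable develops a small loop that bounds a triangular face whose edges are labelled by $i$, $i$, and some internal label; applying the leaf identity \eqref{paperleaf} and then the triangle identity \eqref{papercircstrand} reduces the diagram to the theta graph $n(i-1,j+1,k)$ with coefficient $-[\frac{i+k-j}{2}]_s^2/([i]_s[i+1]_s) = D_{-1,1}(i,j,k)$; the labels $i,k,j$ enter because they are precisely the edges meeting the vertex at the top of the theta. The $(+1,-1)$ case is symmetric under swapping the roles of the two outer edges and yields $D_{1,-1}(i,j,k)$.

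The case $(-1,-1)$ is the main obstacle and is the reason for the product form of $D_{-1,-1}$. Here cap--cups appear on \emph{both} strands simultaneously, so after applying the leaf identity on each side one is left with an internal $2$-gon or $3$-gon that has been created between the top trivalent vertex and the cap--cup regions; resolving this requires a single further application of \eqref{papercircstrand} to a triangle whose labels involve both $i+j-k$ and $i+j+k+2$, producing the two quantum integers that appear squared in the numerator of $D_{-1,-1}(i,j,k)$. Once the $B_{12}$ formula is established, the formulas for $B_{13}$ and $B_{23}$ require no further diagrammatic work: the handlebody $\mathcal H_2$ admits obvious symmetries cyclically permuting the three edges of the underlying theta graph, and under these symmetries $B_{12}$ with labels $(i,j,k)$ maps to $B_{13}$ with labels $(i,k,j)$ and to $B_{23}$ with labels $(j,k,i)$, giving the two stated formulas immediately.
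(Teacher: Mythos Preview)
Your proposal is correct and follows essentially the same route as the paper: isotope the $B_{12}$ strand alongside the $i$- and $j$-edges, apply the Jones--Wenzl recursion once on each side to produce four terms indexed by $(a,b)\in\{\pm 1\}^2$, and then reduce each of the three nontrivial terms using the identities \eqref{paperleaf} and \eqref{papercircstrand} before reading off the theta basis elements; the $B_{13}$ and $B_{23}$ cases are then handled by the permutation symmetry of the theta graph, exactly as in the paper. Your verbal account of precisely which identity is applied how many times in the $(-1,+1)$ and $(-1,-1)$ cases is slightly loose (the squared quantum integers in the coefficients arise from two parallel reductions rather than a single one), but this does not affect the argument.
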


Now we calculate the action of $B_{12}$ and $B_{23}$ on the $m(i,j,k)$ basis of $\Sk(\mathcal{H}_2)$.

\begin{prop}
\label{prop:BactionM}
For all \(i,j,k\) such that the triples \((i, i, j)\) and \((k, k, j)\) are admissible,
\begin{align*}
B_{12} \cdot m(i, j, k) &= {
    \begin{cases} m(i+1, j, k) & \text{ for } i = \frac{j}{2} \\
     m(i+1, j, k) +\frac{\qint{2i+j+1}\qint{i-j/2}}{\qint{i}\qint{i+1}} m(i-1,j,k) & \text{ for } i> j/2 
\end{cases}} \label{eq:mact} \\
B_{23} \cdot m(i, j, k) &= {
    \begin{cases} m(i, j, k+1) & \text{ for } k = \frac{j}{2} \\
     m(i, j, k+1) +\frac{\qint{2k+j+1}\qint{k-j/2}}{\qint{k}\qint{k+1}} m(i,j,k-1) & \text{ for } k> j/2 
\end{cases}}
\end{align*}
\begin{proof}
When \(i \neq \frac{j}{2}\) (and so \(a \neq 0\)) we have:
\begin{alignat*}{5}
    &\rlap{$B_{12} \cdot m(i, j, k)$}   \\
    &\centermathcell{\stackrel{\text{defn}}{=}}& \diagramhh{yaction}{yactionn2}{17pt}{0pt}{0.75} &&\centermathcell{\stackrel{\text{isotopy}}{=}} &&\diagramhh{yaction}{yactionn3}{17pt}{0pt}{0.75} \\
    &\centermathcell{\stackrel{\text{Wenzl}}{=}}& \diagramhh{yaction}{yactionn4}{17pt}{0pt}{0.75} &&-\frac{\qint{i}}{\qint{i+1}}\;&& \diagramhh{yaction}{yactionn5}{17pt}{0pt}{0.75} \\
    &\centermathcell{\stackrel{\text{isotopy}}{=}}& \centermathcell{m(i+1, j, k)} &&-\frac{\qint{i}}{\qint{i+1}}\;&& \diagramhh{yaction}{yactionn6}{21pt}{0pt}{0.75} \\
    &\centermathcell{\stackrel{\eqref{papercircstrand}}{=}}& \centermathcell{m(i+1, j, k)} && + \frac{\qint{i}\qint{a+b+c+1}\qint{a}}{\qint{i+1}\qint{a+b}\qint{a+c}} && \diagramhh{yaction}{yactionn7}{21pt}{0pt}{0.75} \\
    &\centermathcell{=}& \centermathcell{m(i+1, j, k)} &&-\frac{\qint{i-\frac{j}{2}}\qint{2i+j+1}}{\qint{i}\qint{i+1}}\; && \centermathcell{m(i-1,j,k)}
\end{alignat*}
When \(i = \frac{j}{2}\) this implies that \(a = 0\), so at the fourth step the term
\[\diagramhh{yaction}{yactionn6}{21pt}{0pt}{0.75} = 0\]
by \cref{property1} and \cref{turnback}. The result for \(B_{2 3}\) is analogous. 
\end{proof}
\end{prop}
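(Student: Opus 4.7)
The plan is to prove this entirely by diagrammatic calculus, using the identities collected in \cref{lem:diagramprops} together with the leaf and circular-strand reductions \eqref{paperleaf} and \eqref{papercircstrand}. The two statements are related by a left-right symmetry of the dumbbell diagram, so I would prove the claim for $B_{12}$ and note that the $B_{23}$ statement follows by the same argument with the left and right ``dumbbells'' interchanged.

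First I would draw $B_{12}\cdot m(i,j,k)$ by stacking the loop $B_{12}$ on top of the dumbbell diagram. After an isotopy, this produces a diagram where an extra strand runs parallel to one of the boundary strands of the left Jones-Wenzl projector labelled $i$ and then passes through the left ``leaf'' of the dumbbell. Next I would apply the Jones-Wenzl recursion to the $i$-labelled box, which expresses the diagram as a sum of two terms: the first term absorbs the parallel strand into a projector of size $i+1$, and the second term (carrying coefficient $-[i]_s/[i+1]_s$) detaches the extra strand so that it forms a closed ``bubble'' encircling one of the strands feeding into the trivalent vertex.

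Then I would identify each term with a basis element. The first term, after a straight-line isotopy, is manifestly $m(i+1,j,k)$. For the second term I would apply the circular-strand identity \eqref{papercircstrand}, with the local data at the trivalent vertex given by $a = i-j/2$, $b = i+j/2$, and $c = i+j/2$ (the half-perimeters of the admissible triple $(i,i,j)$ at the bottom vertex); this collapses the bubble to a scalar, and the remaining diagram becomes $m(i-1,j,k)$. Simplifying the product of quantum integers yields the coefficient $[2i+j+1]_s[i-j/2]_s/([i]_s[i+1]_s)$ as stated.

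The only subtlety, and the place I would be most careful, is the boundary case $i=j/2$. Here $(i-1,i-1,j)$ is not admissible so $m(i-1,j,k)$ should not even be defined; I would check that the bubble term actually vanishes diagrammatically for this reason. Concretely, when $i=j/2$ the value $a=i-j/2=0$ forces the relevant strand count to be zero, so inside the second term a Jones-Wenzl box is connected to itself by a single ``turn-back'' strand and \eqref{turnback} (together with \eqref{property1}) forces the diagram to be zero. This gives the piecewise formula in the statement, and, as noted, the $B_{23}$ case is identical after reflecting the picture.
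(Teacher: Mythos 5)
Your overall strategy is exactly the paper's: stack $B_{12}$ on the dumbbell, isotope the extra strand alongside the $i$-cable, expand with the Jones--Wenzl recursion, recognise the leading term as $m(i+1,j,k)$, collapse the leftover bubble with \eqref{papercircstrand}, and dispose of the second term in the boundary case $i=j/2$ via \eqref{property1} and \eqref{turnback}; the $B_{23}$ case by reflection is also how the paper argues. So structurally there is nothing new here, and the qualitative parts (which basis vectors can appear, why the lowering term dies at $i=j/2$) are fine.

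The gap is in the one step that carries all of the quantitative content: the evaluation of the bubble. The internal (half-sum) labels of the admissible triple $(i,i,j)$ are $\bigl(\tfrac{2i-j}{2},\tfrac{j}{2},\tfrac{j}{2}\bigr)=(i-j/2,\,j/2,\,j/2)$, not $(i-j/2,\,i+j/2,\,i+j/2)$ as you assert, and you never actually perform the simplification -- you only claim that it ``yields the coefficient as stated''. It cannot: feeding your values into \eqref{papercircstrand} gives a factor with denominator $\qint{2i}^2$, not $\qint{i}^2$, and feeding in the correct values $(i-j/2,j/2,j/2)$ gives, together with the Wenzl factor $\qint{i}/\qint{i+1}$, the coefficient $\qint{i+j/2+1}\,\qint{i-j/2}/(\qint{i}\qint{i+1})$ rather than $\qint{2i+j+1}\,\qint{i-j/2}/(\qint{i}\qint{i+1})$. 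The former is the correct value: for $j=0$ one must recover the Chebyshev identity $B_{12}\cdot m(i,0,k)=m(i+1,0,k)+m(i-1,0,k)$ (since $m(i,0,k)=S_i(B_{12})S_k(B_{23})$ and $B_{12}$ acts by multiplication, cf.\ \cref{lemma:poly}), and indeed $\qint{i+1}\qint{i}/(\qint{i}\qint{i+1})=1$, whereas $\qint{2i+1}/\qint{i+1}\neq 1$; the same conclusion follows from the paper's own theta-basis formula, e.g.\ computing $B_{12}\cdot m(1,0,1)$ through \cref{naction}. In other words the printed numerator $\qint{2i+j+1}$ should be read as $\qint{\tfrac{2i+j+2}{2}}=\qint{i+j/2+1}$, consistent with the intermediate line $\qint{i}\qint{a+b+c+1}\qint{a}/(\qint{i+1}\qint{a+b}\qint{a+c})$ in the paper's proof. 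So your plan is the right one, but the bubble evaluation is set up with the wrong vertex data and then asserted rather than computed, and the assertion that it reproduces the coefficient exactly as printed is not tenable; this verification is precisely where the work of the proposition lies, so as written the proposal does not constitute a proof.
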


% !TEX root=./main.tex
\section{Calculation of $A_2$ Action}
\label{appendix:finalloop}

Finally we need to find \(A_2 \cdot m(i, j, k)\). This is possible using a change of basis:
\begin{prop}
For all \(i,j,k\) such that the triples \((i, i, j)\) and \((k, k, j)\) are admissible,
\[x_2 \cdot m(i, j, k) = \sum_{a,b} \left( -s^{2a + 2} - s^{-2a-2} \right) \left\{ \begin{smallmatrix} i & i & a \\ k & k & j \end{smallmatrix} \right\} \left\{ \begin{smallmatrix} i & k & b \\ k & i & a \end{smallmatrix} \right\} m(i, b, k)\]
where the sum is over all \(a,b\) such that \(\{i,k,a\}, \{i, i, b\},\) and \(\{k,k,b\}\) are admissible. 
\begin{proof}
\begin{alignat*}{4}
\smalldiagram{alt}{alt1} \;&=& \sum_a \left\{ \begin{smallmatrix} i & i & a \\ k & k & j \end{smallmatrix} \right\}\;&\smalldiagram{alt}{alt3} & \;(\cref{thm:ChangeofBasis})\\
&=& \sum_a \left( -s^{2a + 2} - s^{-2a-2} \right) \left\{ \begin{smallmatrix} i & i & a \\ k & k & j \end{smallmatrix} \right\}\;&\smalldiagram{alt}{alt2}\; & \eqref{bookloop}\\
&=& \sum_{a,b} \left( -s^{2a + 2} - s^{-2a-2} \right) \left\{ \begin{smallmatrix} i & i & a \\ k & k & j \end{smallmatrix} \right\} \left\{ \begin{smallmatrix} i & k & b \\ k & i & a \end{smallmatrix} \right\} &\;\smalldiagram{alt}{alt4} & \;(\cref{thm:ChangeofBasis})
\end{alignat*}
where the sum is over all \(a,b\) such that \(\{i,k,a\}, \{i, i, b\},\) and \(\{k,k,b\}\) are admissible. 
\end{proof}
\end{prop}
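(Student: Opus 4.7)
The plan is to diagrammatically move the loop $A_2$ so that it encircles a single strand of a Jones--Wenzl idempotent, at which point the loop evaluation identity \eqref{bookloop} collapses it to a scalar. Since the dumbbell basis element $m(i,j,k)$ has two parallel pairs of strands (of labels $i$ and $k$) connected by a middle edge of label $j$, the curve $A_2$ in \cref{fig:cycles2} links the middle edge. So the strategy is to apply the $6j$-symbol change of basis (\cref{thm:ChangeofBasis}) to replace the middle $j$-strand with a single strand of variable label $a$ going across, then evaluate the loop, then change back.

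First I would apply \cref{thm:ChangeofBasis} to the relevant subdiagram of $m(i,j,k)$. The local picture around the middle edge has the two admissible triples $(i,i,j)$ and $(k,k,j)$ meeting at the trivalent vertices; expanding in the alternative basis replaces $j$ with a new label $a$, producing a sum weighted by $\left\{ \begin{smallmatrix} i & i & a \\ k & k & j \end{smallmatrix} \right\}$, where admissibility of $(i,k,a)$ is automatic from the definition of the $6j$-symbol. In this new picture the loop $A_2$ encircles a single strand labeled $a$, so \eqref{bookloop} replaces it by the scalar $-s^{2a+2}-s^{-2a-2}$.

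Second I would undo the change of basis. Now the diagram contains a single strand of label $a$ bridging the two trivalent vertices, and to return to the dumbbell form $m(i,b,k)$ I apply \cref{thm:ChangeofBasis} once more, this time converting back to a middle edge of variable label $b$ between triples $(i,i,b)$ and $(k,k,b)$, picking up the weight $\left\{ \begin{smallmatrix} i & k & b \\ k & i & a \end{smallmatrix} \right\}$. Combining the two change-of-basis steps with the loop evaluation gives precisely the claimed double sum, with the summation ranging over admissible $a$ from the first $6j$-symbol and admissible $b$ from the second.

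The routine part is the bookkeeping of which triples need to be admissible at each stage; this is built into the definition of the $6j$-symbols, so one simply restricts the summation to those $(a,b)$ for which $(i,k,a)$, $(i,i,b)$, and $(k,k,b)$ are admissible. There is no real obstacle here because the argument is entirely a graphical manipulation: the only substantive ingredients are the change of basis in \cref{thm:ChangeofBasis} and the single-strand loop identity \eqref{bookloop}, and the rest is isotopy. The main subtlety to watch is that when $j=0$ the starting dumbbell has a genuine two-strand bundle in the middle rather than a single edge; but this is the admissible case $(i,i,0)$ already handled by the convention on the $6j$-symbol, so no separate argument is required.
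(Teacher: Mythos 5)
Your route is the same as the paper's: apply \cref{thm:ChangeofBasis} to trade the middle $j$-channel for an $a$-channel, evaluate the encircling loop with \eqref{bookloop} to get the scalar $-s^{2a+2}-s^{-2a-2}$, and change basis back to a $b$-labelled middle edge, producing exactly the two $6j$-weights and the admissibility ranges in the statement. The one point you should fix is the geometric premise: $A_2$ does \emph{not} link the middle $j$-edge of the dumbbell --- the curve whose compressing disc meets the $j$-bar once is $X$, and a loop in that position would be removed by \eqref{bookloop} immediately, giving the diagonal action $(-s^{2j+2}-s^{-2j-2})\,m(i,j,k)$ of \cref{maction} rather than the claimed sum. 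The curve $A_2$ instead bounds a disc meeting each of the two dumbbell circles once (it threads the $i$-strand and the $k$-strand simultaneously), which is precisely why it only becomes a meridian of a \emph{single} edge after the channel switch: in the new channel the $i$- and $k$-legs it encircles meet at a trivalent vertex, so the loop slides onto the $a$-edge and \eqref{bookloop} applies. With that correction your second step ``the loop encircles a single strand labelled $a$'' actually follows, and the rest of the argument, including the harmless $j=0$ case (where the middle edge carries zero strands, not two), is the paper's proof verbatim.
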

However, this result is not very explicit, it does not even allow one to easily see how many terms there are, and is not sufficient for our purposes, so we shall compute the result directly. (We note that while the approach below works, there are other approaches that may be more efficient; for example, similar computations were done in \cite{MP15} using the fusion rules in the appendix of \textit{loc. cit.}.)
\begin{thm}
\label{thm:appendix}
Let \(i,j,k\) be such that the triples \((i, i, j)\) and \((k, k, j)\) are admissible
The action of the middle $A$-loop $A_2$ on the $m$-basis of $\Sk(\mathcal{H}_2)$ is given by
\begin{alignat*}{3}
    A_2 \cdot m(i,j,k)= &&{}-\bigg((s^2-s^{-2})^2 \frac{\qint{j/2}^4\qint{i+j/2+1}\qint{k + j/2+1}}{\qint{j-1}\qint{j}^2\qint{j+1}}\bigg) &m(i, j-2, k) \\
    &&-\bigg((s^2-s^{-2})^2\qint{i- j/2}\qint{k - j/2}\bigg)&m(i, j+2, k)\\
    &&+\bigg(\big(- s^{-2(i+k+1)} - s^{2(i+k+1)}\big) + K\bigg) &m(i,j,k)
\end{alignat*}
where 
\[K = (s^2-s^{-2})^2\left(\frac{\qint{j/2 +1}^2\qint{i- j/2}\qint{k -j/2}}{\qint{j+1}\qint{j+2}} +  \frac{\qint{j/2}^2\qint{i+j/2+1}\qint{k + j/2+1}}{\qint{j}\qint{j+1}}\right)\]
\end{thm}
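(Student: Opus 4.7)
The computation of $A_2 \cdot m(i,j,k)$ is more involved than the other actions in \cref{maction} because $A_2$ does not act diagonally in the dumbbell basis. I would approach this by combining the change-of-basis strategy with direct diagrammatic reduction, and I expect the main difficulty to be in the bookkeeping of coefficients.

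\textbf{Passage to the theta basis.} My first step would be to exploit the fact that $A_2$ acts diagonally on the theta basis $n(i,b,k)$ by \cref{prop:AactionN}. Using \cref{thm:ChangeofBasis}, the dumbbell element can be expanded as $m(i,j,k) = \sum_b \left\{ \begin{smallmatrix} i & i & j \\ k & k & b \end{smallmatrix} \right\} n(i,b,k)$, and similarly each $n(i,b,k)$ re-expands into the dumbbell basis via a second 6j-symbol. Applying $A_2$ diagonally in between gives the formula quoted at the start of \cref{appendix:finalloop}. The theorem then amounts to showing that this double sum collapses to exactly three terms, indexed by $j' \in \{j-2, j, j+2\}$, and evaluating the resulting quantum-integer expressions.

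\textbf{Direct diagrammatic reduction.} A cleaner route, in line with the proofs of \cref{prop:AactionN}, \cref{prop:AXactionM}, and \cref{prop:BactionM}, would be to compute $A_2 \cdot m(i,j,k)$ diagrammatically. I would draw $A_2$ as a single strand pushed into the handlebody, positioned so that it threads the dumbbell in the manner dictated by the embedding $\Sigma_{2,0} \supset $ dumbbell spine. The crucial feature distinguishing $A_2$ from the multiplication operator $X$ (which encircles only the middle edge) is that $A_2$ must pass through \emph{both} trivalent vertices of the dumbbell. I would then apply the Jones-Wenzl recursion at each of these two vertices: each expansion produces a ``straight'' term plus a ``turnback'' term weighted by $[n]/[n+1]$, so the two vertices together generate four contributions. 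Using \eqref{turnback}, \eqref{property1}, \eqref{bookloop}, \eqref{paperleaf}, and \eqref{papercircstrand}, each contribution reduces to a scalar multiple of some $m(i,j',k)$ with $j' \in \{j-2, j, j+2\}$.

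\textbf{Expected main obstacle.} The bookkeeping of coefficients will be the main challenge. In contrast to \cref{prop:BactionM}, where only one Jones-Wenzl idempotent had to be resolved and only two dumbbell labels appeared, here the simultaneous recursion at both vertices produces cross-terms that contribute to the diagonal coefficient of $m(i,j,k)$ as well as to the off-diagonal $m(i,j\pm 2,k)$. Collecting the diagonal contributions requires combining a ``global'' eigenvalue $(-s^{-2(i+k+1)} - s^{2(i+k+1)})$ coming from a closed circle of total label $i+k$, with correction terms arising from the two turnbacks; these corrections pair up to form the quantity $K$ in the statement. The quantum-integer identities required are of a standard type (analogous to those in \cref{prop:BactionN}), but the sheer number of factors and the need to verify vanishing at $j=0$ (consistent with \cref{rmk:coeff}) makes this the most delicate calculation. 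As a sanity check after reduction, one can verify the formula on small admissible triples, and cross-check against the 6j-symbol sum via the explicit evaluation of $\left\{ \begin{smallmatrix} i & i & j \\ k & k & b \end{smallmatrix} \right\}$ in \cite{masbaum1994, kauffmanBook1994}.
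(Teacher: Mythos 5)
Your outline stops where the real content of this theorem begins, and the one concrete mechanism you do commit to is not the one that works. The curve $A_2$, redrawn relative to the dumbbell spine, passes through the holes of \emph{both} lobes, so its diagram has genuine crossings with the $i$-cable and the $k$-cable. Applying the Jones--Wenzl recursion at the two trivalent vertices does nothing to remove those crossings; they have to be resolved with the Kauffman skein relation first. In the paper this is the content of \cref{2twist}, \cref{1stand} and \cref{untwist}: resolving the strand--cable crossings gives four terms with prefactors $s^{\pm i\pm k}$ (the turnback terms vanish), the framing/twist coefficients are then stripped off, and one further application of the skein relation collapses these four terms into exactly two pieces --- a diagonal piece with coefficient $-s^{2(i+k+1)}-s^{-2(i+k+1)}$ and a single ``hooked'' diagram with prefactor $-(s^2-s^{-2})^2\qint{i}\qint{k}$. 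There is no closed circle of label $i+k$ producing that eigenvalue, as you suggest; it emerges from combining crossing and twist coefficients. Only at this point does the Jones--Wenzl recursion enter, and it is applied (twice) to the middle $j$-labelled idempotent, via \cref{inloop} and \cref{outloop}, which is what produces the $m(i,j\pm 2,k)$ terms and the correction $K$, together with the separate treatment of the degenerate case $j=0$. None of this bookkeeping --- which \emph{is} the theorem, since the statement is an explicit formula --- appears in your proposal; ``each contribution reduces to a scalar multiple of some $m(i,j',k)$'' and ``the identities are of a standard type'' assert the conclusion rather than prove it.

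Your first route (conjugating the diagonal action on the theta basis by $6j$-symbols) is stated in the paper as a proposition, but it is explicitly set aside precisely because it is not explicit: for each output label $b\in\{j-2,j,j+2\}$ the coefficient is a sum over all admissible $a$ of products of two $6j$-symbols with the eigenvalue $-s^{2a+2}-s^{-2a-2}$, and collapsing that sum to the closed forms in \cref{thm:appendix} would require nontrivial $6j$ summation identities that you do not supply. So either route, as written, leaves the essential computation undone, and the diagrammatic route as you describe it (vertex expansions producing four contributions, a ``global'' circle of label $i+k$) would not reproduce the structure of the actual calculation.
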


\begin{rmk}
In the above expressions, we have used the convention that if $j=0$, then any coefficient term with $[j]$ in the denominator is $0$. In fact, this follows by inspection of these coefficients, since the term $[j/2]^2_s/[j]_s$ (and hence each coefficient itself) is equal to zero when $j=0$.
\end{rmk}
For the proof of \cref{thm:appendix} we first need some lemmas.

\begin{lem}
\label{2twist}
\[\smalldiagram{2twist}{2twist1}= s^{m-1} \smalldiagram{2twist}{2twist6},\quad \smalldiagram{2twist}{2twist2} = s^{-m+1}\smalldiagram{2twist}{2twist6}\]
\begin{proof}
\begin{align*}
\smalldiagram{2twist}{2twist3} &\stackrel{(Skein)}{=} s^{-1} \underbrace{\smalldiagram{2twist}{2twist5}}_{0 \text{ as } (\dagger)} + \underbrace{s \smalldiagram{2twist}{2twist4}}_{\text{crossing has moved one position over}}\\
&\quad= \dots = s^{m-1} \smalldiagram{2twist}{2twist6}
\end{align*}
Where \((\dagger)\) is 
\[\smalldiagram{2twist}{2twist5} \stackrel{(Skein)}{=} s^{-1} \smalldiagram{2twist}{2twist7} + s \underbrace{\smalldiagram{2twist}{2twist8}}_{0 \text{ as turnback}} = \dots = s^{3-m} \underbrace{\smalldiagram{2twist}{2twist9}}_{0 \text{ as turnback}} = 0 .\]
\end{proof}
\end{lem}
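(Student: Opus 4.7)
The plan is to prove both identities by induction on $m$ (the number of parallel strands carrying the Jones--Wenzl idempotent over which the single strand crosses), using the Kauffman bracket skein relation \eqref{skeinrel_cross} at the crossing together with the turnback property \eqref{turnback} of the Jones--Wenzl idempotent. Since the two claims are identical except for the sign of the crossing, I would give the argument for the first identity and note that the second follows by exchanging the roles of $s$ and $s^{-1}$ in every step (this is exactly the symmetry of the skein relation under crossing reversal).

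First I would apply the skein relation to the single crossing in the left-hand diagram. This produces two terms: the $s^{+1}$ term, in which the two strands are resolved in the direction that effectively pushes the crossing one strand further along the bundle; and the $s^{-1}$ term, in which the resolution creates a small local configuration where the crossing strand caps back on itself against the idempotent. Call the latter the \emph{dagger} configuration, matching the sketch in the excerpt. The claim is that the dagger configuration evaluates to zero, so that each application of the skein relation simply shifts the crossing one position across the bundle at the cost of a factor of~$s$. After $m-1$ shifts the crossing has been transported entirely past the idempotent and can be removed, leaving exactly $s^{m-1}$ times the diagram with no crossing, which is the right-hand side.

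The main step — really the only substantive step — is verifying that the dagger configuration vanishes. This is itself a short induction: apply the skein relation again at the new crossing in the dagger diagram. One of the two resolutions is already a literal turnback into the Jones--Wenzl idempotent and is therefore zero by \eqref{turnback}; the other resolution propagates the dagger configuration one position further along the bundle, accumulating a power of $s$. Iterating, the dagger configuration becomes $s^{3-m}$ times a configuration which is again a turnback into the idempotent, hence zero. This matches the bracketed computation labelled $(\dagger)$ in the sketch, and closes the induction.

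The hardest part of the write-up is the bookkeeping of what the intermediate pictures look like — precisely which arcs connect to which strand of the idempotent box after each resolution — since the identities rely on every intermediate configuration being either a clean shift of the crossing or a turnback. Once those pictures are drawn carefully, each step reduces to a single application of \eqref{skeinrel_cross} or \eqref{turnback}, and no new computation is needed. The second identity then follows verbatim with the substitution $s \leftrightarrow s^{-1}$ coming from the opposite crossing, giving the factor $s^{-(m-1)} = s^{-m+1}$.
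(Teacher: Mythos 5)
Your proposal follows essentially the same route as the paper's own proof: resolve the single crossing by the skein relation, show the capping resolution vanishes by iterating the skein relation until only turnbacks into the Jones--Wenzl idempotent remain, and let the surviving resolution shift the crossing across the bundle, accumulating $s^{m-1}$ (with the mirror argument giving $s^{-m+1}$). The argument is correct and no further comparison is needed.
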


\begin{lem}
\label{1stand}
\[\smalldiagram{1strand}{1strand1} =  s^{-m} \smalldiagram{1strand}{1strand9} + s^{m} \smalldiagram{1strand}{1strand8}\]
\[\smalldiagram{1strand}{1strand10} = s^{m} \smalldiagram{1strand}{1strand12} + s^{-m} \smalldiagram{1strand}{1strand11}\]
\begin{proof}
\begin{align*}
    \smalldiagram{1strand}{1strand1} &\stackrel{(Skein)}{=} s^{-1} \smalldiagram{1strand}{1strand3} + s \smalldiagram{1strand}{1strand2} \\
    &\stackrel{(Skein)}{=} s^{-2} \smalldiagram{1strand}{1strand7} + \underbrace{\smalldiagram{1strand}{1strand6}}_{0 \text{ as turnback}} + \underbrace{\smalldiagram{1strand}{1strand5}}_{0 \text{ as turnback}} + s^{2} \smalldiagram{1strand}{1strand4} \\
    &\quad = \dots = s^{-m} \smalldiagram{1strand}{1strand9} + s^{m} \smalldiagram{1strand}{1strand8}
\end{align*}
The second result is analogous.
\end{proof}
\end{lem}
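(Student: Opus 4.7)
The statement is a pair of skein-theoretic identities that compute what happens when a single strand passes over (respectively under) an $m$-strand bundle capped by a Jones--Wenzl idempotent: the entire tangle reduces to a linear combination of two simple diagrams, with coefficients $s^{\pm m}$. My plan is to prove this by iteratively applying the Kauffman bracket skein relation \eqref{skeinrel_cross} at each of the $m$ crossings between the single strand and the bundle, and using the Jones--Wenzl absorption property \eqref{turnback} to annihilate the turnback contributions that appear along the way.

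Concretely, I would proceed as follows. Apply \eqref{skeinrel_cross} to the leftmost (say) of the $m$ crossings of the single strand with the bundle. This produces two terms with coefficients $s$ and $s^{-1}$. One term resolves horizontally, producing a configuration that, after a further skein expansion of the \emph{next} crossing, splits into (i) a fully contracted piece that gives an immediate turnback on the Jones--Wenzl box and (ii) another piece one step deeper. Iterating this secondary expansion, as the proof sketch in the excerpt indicates with the dagger computation, every descendant eventually produces a turnback and so vanishes by \eqref{turnback}. The other term simply moves the crossing one step along the bundle while multiplying by $s^{\pm 1}$. Thus after one round we have peeled off an $s^{\pm 1}$ factor and reduced the number of remaining crossings by one (this is essentially the content of \cref{2twist}, which is the baby case). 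Inducting on $m$ accumulates the factor $s^m$ on one side and $s^{-m}$ on the other, and after all $m$ crossings have been processed only the two terminal diagrams $\smalldiagram{1strand}{1strand9}$ and $\smalldiagram{1strand}{1strand8}$ survive, with the claimed coefficients.

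The second identity, for the oppositely-oriented crossing, follows by the identical argument with the roles of the two resolutions swapped (equivalently, by applying the first identity after swapping $s \leftrightarrow s^{-1}$ at each crossing, which exchanges $s^m$ and $s^{-m}$ and the two terminal diagrams). The only real subtlety — and the main place to be careful — is the bookkeeping in showing that every intermediate term produced during the secondary skein expansions really does terminate in a turnback: this uses that a Jones--Wenzl idempotent with any single cap strand is zero, applied repeatedly to successively deeper configurations. Once one organises the induction so that after processing the first $r$ crossings the surviving diagram has exactly $m-r$ crossings and accumulated weight $s^{\pm r}$, the argument closes cleanly.
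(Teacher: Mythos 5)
Your overall strategy---resolving the $m$ crossings one at a time with \eqref{skeinrel_cross} and using \eqref{turnback} to kill the terms that cap off on the Jones--Wenzl idempotent---is the paper's strategy. However, the mechanism you describe in your middle paragraph is not the one that operates here, and as written it is inconsistent with the statement being proved. You claim that one of the two terms produced at the first crossing (the ``horizontally resolved'' one) is annihilated \emph{entirely} after iterating the expansion, in the way the dagger computation works in \cref{2twist}, while the other term merely carries the crossing one step along at the cost of $s^{\pm 1}$. If that were true, the right-hand side would consist of a single diagram with coefficient $s^{\pm m}$ rather than two; already for $m=1$ it would contradict the skein relation itself, which produces two surviving terms and no turnbacks at all. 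The dagger argument applies in \cref{2twist} because there the strand being resolved emanates from the Jones--Wenzl box and returns to it, so even the deepest descendant of that branch ends in a turnback; in the present lemma the single strand is external to the box, and the deepest descendant of \emph{each} branch is a legitimate planar diagram that survives.

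The correct bookkeeping---and what the paper's displayed second step shows---is that both first-level branches persist: at every subsequent crossing, each surviving term splits into a child resolved the same way as before (picking up the same factor $s^{\pm 1}$ and surviving) and a child resolved the opposite way, and it is precisely these mixed resolutions, one in each branch at each step, that create a turnback on the idempotent and vanish by \eqref{turnback}. Hence after processing $r$ crossings exactly two terms remain, with coefficients $s^{r}$ and $s^{-r}$ and $m-r$ crossings left; carried to $r=m$ this yields the two terminal diagrams with coefficients $s^{m}$ and $s^{-m}$, and the same argument with the opposite crossing gives the second identity. This is in fact the induction you state in your final sentence, so your closing set-up is right; what needs to be replaced is the justification for which terms are discarded: an entire branch never dies here, only the mixed-resolution children do. With that correction your argument coincides with the paper's.
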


\begin{lem}
\[\diagramhh{untwist}{untwist1}{22pt}{0pt}{0.75} = -s^{3+i}\; \diagramhh{untwist}{untwist3}{26pt}{0pt}{0.75}, \quad \diagramhh{untwist}{untwist5}{22pt}{0pt}{0.75} = s^{-i} \;\diagramhh{untwist}{untwist6}{22pt}{0pt}{0.75}\]
\[\diagramhh{untwist}{untwist7}{22pt}{0pt}{0.75} = -s^{-3-k}\;\diagramhh{untwist}{untwist9}{25pt}{0pt}{0.75},\quad \diagramhh{untwist}{untwist8}{22pt}{0pt}{0.75} = s^{k} \;\diagramhh{untwist}{untwist10}{22pt}{0pt}{0.75}\]
\label{untwist}
\begin{proof}
\begin{align*}
    \diagramhh{untwist}{untwist1}{22pt}{0pt}{0.75} &\stackrel{\eqref{prop6a}}{=} -s^{-3}\; \diagramhh{untwist}{untwist2}{22pt}{0pt}{0.75} \stackrel{\eqref{2twist}}{=} -s^{-3+i} \; \diagramhh{untwist}{untwist3}{26pt}{0pt}{0.75} = -s^{-3+i} \; \diagramhh{untwist}{untwist4}{24pt}{0pt}{0.75} \\
    \diagramhh{untwist}{untwist5}{22pt}{0pt}{0.75} &\stackrel{\eqref{2twist}}{=} s^{-i} \; \diagramhh{untwist}{untwist6}{22pt}{0pt}{0.75}
\end{align*}
The final two cases are analogous.
\end{proof}
\end{lem}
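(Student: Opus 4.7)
The lemma consists of four diagrammatic identities, each asserting that a loop with a single over/under crossing near a trivalent vertex can be replaced by an isotopic loop with no crossing, at the cost of a simple scalar of the form $\pm s^{a \pm b}$ where $a \in \{-3, 0, 3\}$ tracks a framing twist at the vertex and $b \in \{i, k\}$ tracks a framing change on the cabled strand being passed through. My plan is to prove all four by the same two-step recipe and then indicate symmetry for the remaining cases.

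For each identity, the first step is to localise the crossing near the trivalent vertex and recognise it as a Jones--Wenzl ``half-twist'' on a single strand. The Jones--Wenzl twist identities \eqref{prop6a} and \eqref{prop6b} specialise at $m=1$ to $\pm s^{\pm 3}$ times the untwisted vertex, so this produces the prefactors $-s^{\pm 3}$ appearing in the first and third identities. After this simplification, the remaining picture is a strand wrapped around a bundle of parallel strands (labelled $i$ or $k$), i.e.\ exactly the configuration handled by Lemma \ref{2twist}. Applying Lemma \ref{2twist} with the appropriate cable count then produces the $s^{\pm i}$ (resp.\ $s^{\pm k}$) factor. Combining the two steps gives the first identity as $-s^{-3}\cdot s^{i}$ and the third as $-s^{-3}\cdot s^{-k}$, with an isotopy at the end to identify the resulting diagram with the advertised form.

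For the second and fourth identities there is no twist at the vertex, so the first step is skipped and Lemma \ref{2twist} is applied directly, yielding $s^{-i}$ and $s^{k}$ respectively. The whole argument is essentially bookkeeping, and the only real obstacle is verifying that the sign of the exponent produced by Lemma \ref{2twist} matches the handedness of the crossing and the orientation of the cabled bundle in each of the four pictures; this amounts to checking, in each case, which of the two formulas $s^{m-1}$ versus $s^{-m+1}$ in Lemma \ref{2twist} applies. Once those sign conventions are pinned down, the three cases not written out explicitly follow by the mirror symmetry that exchanges over- and under-crossings (converting \eqref{prop6a} into \eqref{prop6b}) and by the top-bottom symmetry that swaps the role of the $i$-labelled and $k$-labelled cables.
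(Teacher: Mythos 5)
Your proposal follows the paper's proof essentially verbatim: remove the curl at the vertex via the twist identities \eqref{prop6a}/\eqref{prop6b} (giving the $-s^{\pm 3}$ factor), then pass the single strand through the $i$- or $k$-labelled cable via \cref{2twist} (giving $s^{\pm i}$ or $s^{\pm k}$), finish with an isotopy, and handle the remaining cases by symmetry. This is correct and is the same argument the paper gives.
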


\begin{lem}
\label{outloop}
\[\diagramhh{outloop}{outloop1}{25pt}{0pt}{0.75} = \begin{cases} \smalldiagram{outloop}{outloop8} & \text{when } j=0 \\[20pt]
\smalldiagram{outloop}{outloop8} - \frac{\qint{\frac{j+1}{2}}^2}{\qint{j}\qint{j+1}} \smalldiagram{outloop}{outloop9}  & \text{ for } j>0 \end{cases}\]
\begin{proof}
When \(j\neq0\) we have
\begin{alignat*}{5}
&\rlap{$\diagram{outloop}{outloop1}$} \\
&\centermathcell{\stackrel{\text{isotopy}}{=}}\;& \centermathcell{\mediumdiagram{outloop}{outloop2}} &&\centermathcell{\;=\;}& \centermathcell{\mediumdiagram{outloop}{outloop3}} &\\
&\stackrel{\text{Wenzl}}{=}& \centermathcell{\mediumdiagram{outloop}{outloop4}} &&- \frac{\qint{j}}{\qint{j+1}} &\centermathcell{\mediumdiagram{outloop}{outloop5}} \\
&\centermathcell{\stackrel{\eqref{paperleaf}}{=}}& \centermathcell{\smalldiagram{outloop}{outloop8}}  &&- \frac{\qint{j-b}}{\qint{j}} \frac{\qint{j-e}}{\qint{j}} \frac{\qint{j}}{\qint{j+1}} &\centermathcell{\mediumdiagram{outloop}{outloop7}} \\
&\centermathcell{=} & \centermathcell{\smalldiagram{outloop}{outloop8}}  &&- \frac{\qint{\frac{j-1}{2}}^2}{\qint{j}\qint{j+1}} &\centermathcell{\smalldiagram{outloop}{outloop9}} \\
\end{alignat*}
as \(b=e= \frac{j-1}{2}\). When \(j=0\) we must have \(b = c = 0\). We apply the isotopy as before but instead of applying the Wenzl recurrence relation we just add another box to the red strand and reverse the isotopy to give the result.
\end{proof}
\end{lem}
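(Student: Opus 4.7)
The plan is to prove the outloop identity by pure diagrammatic manipulation: apply a planar isotopy to expose a Jones--Wenzl idempotent on the $j$-strand, unfold the Wenzl recursion once, and then simplify each resulting term using the leaf identity \eqref{paperleaf}. This is the standard template for such skein computations, and the structure of the right-hand side strongly suggests this pattern: the two summands correspond precisely to the two terms produced by a single application of the recursion
\[\diagramhhink{wenzl}{wenzl1}{18pt}{0pt}{0.6} = \diagramhhink{wenzl}{wenzl2}{18pt}{0pt}{0.6} - \frac{\qint{j}}{\qint{j+1}} \diagramhhink{wenzl}{wenzl3}{18pt}{0pt}{0.6}.\]

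Concretely, I would first perform a planar isotopy on the left-hand side to bring the outer loop into a position where it runs parallel to the $j$-strand, so that I can legally insert/expose a Jones--Wenzl box on a $j$-strand crossing under the loop. Applying the Wenzl recursion at that box then produces two diagrams. The first retains the parallel structure and can be isotoped directly to the first diagram on the right-hand side (\smalldiagram{outloop}{outloop8}); this accounts for the leading term with coefficient $1$. The second, with the characteristic ``cap-cup'' turnback, produces closed leaves attached on either side of the remaining trivalent vertex.

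For the turnback term, I would apply the paperleaf identity \eqref{paperleaf} twice, once on each side, to absorb the two leaves into the central diagram \smalldiagram{outloop}{outloop9}. Each application contributes a factor of the form $\qint{\cdot}/\qint{j}$, where the numerator indices are determined by the admissibility of the trivalent vertex (one reads off the two smaller edge labels around the vertex, which are each half-integer expressions in $j$). Multiplying these two factors with the $-\qint{j}/\qint{j+1}$ from the Wenzl recursion yields the claimed coefficient $-\qint{(j+1)/2}^2/(\qint{j}\qint{j+1})$ after the $\qint{j}$ factors cancel appropriately.

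Finally, the $j=0$ case must be handled separately, because the Wenzl recursion cannot be applied on a strand of label $0$ and the coefficient $\qint{j}/\qint{j+1}$ would be ill-defined. In this case, admissibility forces the incident labels, and one can instead insert a trivial Wenzl box on an adjacent strand and reverse the isotopy, leaving only the first term of the general formula (i.e., exactly \smalldiagram{outloop}{outloop8}). The main obstacle is bookkeeping: correctly identifying the leaf labels around the trivalent vertex (to feed into \eqref{paperleaf}) and tracking which half-integer expressions appear, rather than any conceptual subtlety; once the two leaf coefficients are written down and multiplied, the identity follows by straightforward algebra with quantum integers.
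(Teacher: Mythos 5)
Your proposal follows essentially the same route as the paper's proof: a planar isotopy to expose the Jones--Wenzl box, a single application of the Wenzl recursion giving the leading term plus a turnback term, two applications of \eqref{paperleaf} (one per side) whose factors combine with $-\qint{j}/\qint{j+1}$ to give the stated coefficient, and a separate treatment of $j=0$ by inserting a trivial box and reversing the isotopy rather than invoking the recursion. No substantive differences or gaps to report.
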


\begin{lem}
\label{inloop}
When \(j=0\) 
\[\diagramhh{inloop}{inloop1}{25pt}{0pt}{0.75} =  \smalldiagram{inloop}{inloop6} 
- \frac{1}{\qint{2}} \smalldiagram{inloop}{inloop7}\]
and when \(j\neq0\) 
\begin{alignat*}{3}
&\rlap{$\diagram{inloop}{inloop1}$} \\
&=& \frac{\qint{i- \frac{j}{2} +1}\qint{k - \frac{j}{2}+1}}{\qint{i+1}\qint{k+1}} \smalldiagram{inloop}{inloop6} \\
&&\hspace{-2em}- \left(\frac{\qint{\frac{j+2}{2}}^2\qint{i- \frac{j}{2} +1}\qint{k - \frac{j}{2}+1}}{\qint{i+1}\qint{j+1}\qint{j+2}\qint{k+1}} +  \frac{\qint{\frac{j}{2}}^2\qint{i+\frac{j}{2}+2}\qint{k + \frac{j}{2}+2}}{\qint{i+1}\qint{j}\qint{j+1}\qint{k+1}}\right) \smalldiagram{inloop}{inloop7} \\
&&+ \frac{\qint{\frac{j}{2}}^4\qint{i+\frac{j}{2}+2}\qint{k + \frac{j}{2}+2}}{\qint{i+1}\qint{j-1}\qint{j}^2\qint{j+1}\qint{k+1}} \smalldiagram{inloop}{inloop8}
\end{alignat*}

\begin{proof}
When \(j \neq 0\) we have
\begin{alignat*}{3}
&\rlap{$\diagram{inloop}{inloop1}$} \\
&\centermathcell{\stackrel{\text{Wenzl}}{=}} \rlap{\hspace{4em}$\diagramhh{inloop}{inloop2}{24pt}{0pt}{0.75}$} &\llap{$- \frac{\qint{j}}{\qint{j+1}}\diagramhh{inloop}{inloop3}{24pt}{0pt}{0.75}$} \\
&\centermathcell{\stackrel{\eqref{paperleaf}, \eqref{papercircstrand}}{=}}& \frac{\qint{a}}{\qint{a+b}} \frac{\qint{d}}{\qint{d+e}} \diagramhh{inloop}{inloop4}{24pt}{0pt}{0.75} \\
&&- \frac{\qint{j}}{\qint{j+1}} \frac{\qint{a+b+c+1}\qint{b}}{\qint{b+a}\qint{b+c}} \frac{\qint{d+e+f+1}\qint{e}}{\qint{e+d}\qint{e+f}} \diagramhh{inloop}{inloop5}{24pt}{0pt}{0.75} \\
&\centermathcell{=}& \frac{\qint{i- \frac{j}{2} +1}\qint{k - \frac{j}{2}+1}}{\qint{i+1}\qint{k+1}} \diagramhh{inloop}{inloop4}{24pt}{0pt}{0.75} \\
&&-  \frac{\qint{\frac{j}{2}}^2\qint{i+\frac{j}{2}+2}\qint{k + \frac{j}{2}+2}}{\qint{i+1}\qint{j}\qint{j+1}\qint{k+1}} \diagramhh{inloop}{inloop5}{24pt}{0pt}{0.75} \\
&\rlap{ as $a= \frac{2i+2-j}{2}, d=\frac{2k+2-j}{2}, b=c=e=f=\frac{j}{2}$}\\
&\centermathcell{\stackrel{\cref{outloop}}{=}} &\frac{\qint{i- \frac{j}{2} +1}\qint{k - \frac{j}{2}+1}}{\qint{i+1}\qint{k+1}} \smalldiagram{inloop}{inloop6} \\
&&\hspace{-6em}- \left(\frac{\qint{\frac{j+2}{2}}^2\qint{i- \frac{j}{2} +1}\qint{k - \frac{j}{2}+1}}{\qint{i+1}\qint{j+1}\qint{j+2}\qint{k+1}} +  \frac{\qint{\frac{j}{2}}^2\qint{i+\frac{j}{2}+2}\qint{k + \frac{j}{2}+2}}{\qint{i+1}\qint{j}\qint{j+1}\qint{k+1}}\right) \smalldiagram{inloop}{inloop7} \\
&&+  \frac{\qint{\frac{j}{2}}^4\qint{i+\frac{j}{2}+2}\qint{k + \frac{j}{2}+2}}{\qint{i+1}\qint{j-1}\qint{j}^2\qint{j+1}\qint{k+1}} \smalldiagram{inloop}{inloop8}
\end{alignat*}
The case when \(j=0\) is similar except at the first step one does not have to apply the Wenzl relation so there is no second term. 
\end{proof}
\end{lem}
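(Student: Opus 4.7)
The plan is to resolve the diagram \diagram{inloop}{inloop1} by a two-stage simplification. First, apply the defining recursion of the Jones-Wenzl idempotent on the central $j$-strand to write the left-hand side as a sum of two pieces: a principal term (coefficient $1$) and a correction term (coefficient $-\qint{j}/\qint{j+1}$) containing a turnback. This is the same opening move used to evaluate $B$-loop actions in the proofs of \cref{prop:BactionN} and \cref{prop:BactionM}.

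Second, I would simplify each of the two pieces using the leaf identity \eqref{paperleaf} and the circle-strand identity \eqref{papercircstrand}. The admissibility of $(i,i,j)$ and $(k,k,j)$ pins down the local labels at the two trivalent vertices as $a=(2i+2-j)/2$, $d=(2k+2-j)/2$ and $b=c=e=f=j/2$. Applying \eqref{paperleaf} on both sides of the Wenzl principal term yields a diagram still carrying an outer loop, together with the coefficient $\qint{i-j/2+1}\qint{k-j/2+1}/(\qint{i+1}\qint{k+1})$. Applying \eqref{papercircstrand} on both sides of the Wenzl correction term likewise produces a dumbbell with an outer loop, but now with a prefactor containing a $\qint{j/2}^2$ factor, and the extra $\qint{j}/\qint{j+1}$ from the Wenzl expansion.

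At this point both intermediate diagrams still carry an outer loop, and each is resolved by invoking \cref{outloop}, which was established immediately above and replaces such a loop by a clean dumbbell plus a subleading `bubble' term. Substituting both expansions and collecting like diagrams produces exactly the three summands \smalldiagram{inloop}{inloop6}, \smalldiagram{inloop}{inloop7}, \smalldiagram{inloop}{inloop8} of the statement. Notice that \smalldiagram{inloop}{inloop7} receives contributions from \emph{both} stages: once from the Wenzl principal piece combined with the subleading term of \cref{outloop}, and once from the Wenzl correction piece combined with the leading term of \cref{outloop}. This accounts for the middle coefficient in the statement appearing naturally as a sum of two fractions. The case $j=0$ is handled separately and is simpler: no Wenzl expansion is needed since the central strand is empty, and a single application of \eqref{papercircstrand} (or equivalently \cref{outloop} at $j=0$) produces the two-term formula, with coefficient $1/\qint{2}$ matching the natural $j\to 0$ specialisation of the general formula under the vanishing convention of \cref{rmk:coeff}.

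The principal obstacle is coefficient bookkeeping: each intermediate step contributes a ratio of quantum integers, and these must collapse to the compact fractions in the statement. In particular, the $\qint{j/2}^4$ numerator of the \smalldiagram{inloop}{inloop8} coefficient must emerge as the product of the Wenzl factor $\qint{j}/\qint{j+1}$, one factor of $\qint{j/2}^2$ from the circle-strand evaluation, and another $\qint{j/2}^2$ from the subleading term in \cref{outloop}; and the $\qint{j+2}$ denominator in the first half of the \smalldiagram{inloop}{inloop7} coefficient arises only from the subleading \cref{outloop} factor applied inside the principal Wenzl term. A $j=2$ sanity check with small $i,k$ (so $b=c=e=f=1$, $a=i$, $d=k$) is enough to catch an algebraic misstep before trusting the general formula.
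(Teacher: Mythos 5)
Your proposal follows essentially the same route as the paper's own proof: Wenzl recursion on the central $j$-strand, then \eqref{paperleaf} and \eqref{papercircstrand} with $a=\tfrac{2i+2-j}{2}$, $d=\tfrac{2k+2-j}{2}$, $b=c=e=f=\tfrac{j}{2}$, then \cref{outloop} to remove the remaining outer loops, with the middle diagram correctly receiving contributions from both branches (which is exactly how the two-fraction coefficient arises), and the $j=0$ case handled separately without the Wenzl step. The only small imprecision is the parenthetical about the $j=0$ case: the $-1/\qint{2}$ there comes from applying the loop-removal of \cref{outloop} with its internal label equal to $1$ (not the $j=0$ branch of \cref{outloop}, which has no correction term), but this does not affect the correctness of the argument.
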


We can now use these lemmas to prove \cref{thm:appendix}
\begin{proof}
\begin{alignat*}{5}
&\rlap{$\smalldiagram{alt}{alt1}$}\\
&\centermathcell{\stackrel{(\cref{1stand})}{=}}& s^{i-k} \; \diagram{main}{main2} &&+ s^{-i-k} &&\centermathcell{\diagram{main}{main3}} \\
&&+ s^{i+k} \;  \diagram{main}{main4} &&+ s^{-i+k}&&\centermathcell{\diagram{main}{main5}} \\
&\centermathcell{\stackrel{(\cref{untwist})}{=}}&  s^{2i-2k} \; \diagram{main}{main6} &&- s^{-2i-2k-1} &&\centermathcell{\diagram{main}{main7}} \\
&&- s^{2i+2k +1} \; \diagram{main}{main8} &&+ s^{-2i+2k} &&\centermathcell{\diagram{main}{main9}} \\
&\centermathcell{\stackrel{(Skein)}{=}}&&& \llap{$\left(- s^{-2i-2k-2}
- s^{2i+2k+2} \right)$}&&\centermathcell{\smalldiagram{main}{main11}} \\
&&&&\llap{$+\left( s^{2i-2k} - s^{-2i-2k}
- s^{2i+2k} + s^{-2i+2k}\right)\;$}&&\centermathcell{\diagramhh{main}{main10}{24pt}{0pt}{0.75}} \\ 
&\centermathcell{=}& \centermathcell{\hspace{-2em}\left(- s^{-2i-2k-2}
- s^{2i+2k+2} \right)\smalldiagram{main}{main11}} &&-(s^2-s^{-2})^2\qint{i}\qint{k}&&\diagramhh{main}{main10}{24pt}{0pt}{0.75}\\
&\centermathcell{\stackrel{(\cref{inloop})}{=}} &&&\llap{$-(s^2-s^{-2})^2\qint{i- \frac{j}{2}}\qint{k - \frac{j}{2}}$}&& \centermathcell{\smalldiagram{main}{main12}} \\
&&&&\llap{$+\left(\left(- s^{-2(i+k+1)} - s^{2(i+k+1)} \right) \right.$}&&\\
&&&&\llap{$\left.+ (s^2-s^{-2})^2\left(\frac{\qint{\frac{j+2}{2}}^2\qint{i- \frac{j}{2}}\qint{k - \frac{j}{2}}}{\qint{j+1}\qint{j+2}} +  \frac{\qint{\frac{j}{2}}^2\qint{i+\frac{j}{2}+1}\qint{k + \frac{j}{2}+1}}{\qint{j}\qint{j+1}}\right) \right)$} && \centermathcell{\smalldiagram{main}{main11}} \\
&&&&\llap{$- (s^2-s^{-2})^2 \frac{\qint{\frac{j}{2}}^4\qint{i+\frac{j}{2}+1}\qint{k + \frac{j}{2}+1}}{\qint{j-1}\qint{j}^2\qint{j+1}}$} && \centermathcell{\smalldiagram{main}{main13}} \\
&\rlap{assuming $j \neq 0$ and when $j=0$ at this stage we instead get: } \\
&&&&\llap{$-(s^2-s^{-2})^2\qint{i}\qint{k}$} &&\centermathcell{\smalldiagram{main}{main12}} \\
&&&&\llap{$+ \left(\left(- s^{-2i-2k-2} - s^{2i+2k+2} \right)  + (s^2-s^{-2})^2\frac{\qint{i}\qint{k}}{\qint{2}}\right)$}&&\centermathcell{\smalldiagram{main}{main11}}
\end{alignat*}
\end{proof}
\end{appendix}

%\bibliographystyle{amsalpha}
%\bibliography{main.bib}

\printbibliography[heading=bibintoc,title={References}]

\end{document}